\numberwithin{equation}{section}
\begin{document}
\theoremstyle{plain}
\newtheorem{thm}{Theorem}[section]
\newtheorem{theorem}[thm]{Theorem}
\newtheorem{lemma}[thm]{Lemma}
\newtheorem{corollary}[thm]{Corollary}
\newtheorem{proposition}[thm]{Proposition}
\newtheorem{conjecture}[thm]{Conjecture}
\theoremstyle{definition}
\newtheorem{remark}[thm]{Remark}
\newtheorem{remarks}[thm]{Remarks}
\newtheorem{definition}[thm]{Definition}
\newtheorem{example}[thm]{Example}

\newcommand{\caA}{{\mathcal A}}
\newcommand{\caB}{{\mathcal B}}
\newcommand{\caC}{{\mathcal C}}
\newcommand{\caD}{{\mathcal D}}
\newcommand{\caE}{{\mathcal E}}
\newcommand{\caF}{{\mathcal F}}
\newcommand{\caG}{{\mathcal G}}
\newcommand{\caH}{{\mathcal H}}
\newcommand{\caI}{{\mathcal I}}
\newcommand{\caJ}{{\mathcal J}}
\newcommand{\caK}{{\mathcal K}}
\newcommand{\caL}{{\mathcal L}}
\newcommand{\caM}{{\mathcal M}}
\newcommand{\caN}{{\mathcal N}}
\newcommand{\caO}{{\mathcal O}}
\newcommand{\caP}{{\mathcal P}}
\newcommand{\caQ}{{\mathcal Q}}
\newcommand{\caR}{{\mathcal R}}
\newcommand{\caS}{{\mathcal S}}
\newcommand{\caT}{{\mathcal T}}
\newcommand{\caU}{{\mathcal U}}
\newcommand{\caV}{{\mathcal V}}
\newcommand{\caW}{{\mathcal W}}
\newcommand{\caX}{{\mathcal X}}
\newcommand{\caY}{{\mathcal Y}}
\newcommand{\caZ}{{\mathcal Z}}
\newcommand{\fA}{{\mathfrak A}}
\newcommand{\fB}{{\mathfrak B}}
\newcommand{\fC}{{\mathfrak C}}
\newcommand{\fD}{{\mathfrak D}}
\newcommand{\fE}{{\mathfrak E}}
\newcommand{\fF}{{\mathfrak F}}
\newcommand{\fG}{{\mathfrak G}}
\newcommand{\fH}{{\mathfrak H}}
\newcommand{\fI}{{\mathfrak I}}
\newcommand{\fJ}{{\mathfrak J}}
\newcommand{\fK}{{\mathfrak K}}
\newcommand{\fL}{{\mathfrak L}}
\newcommand{\fM}{{\mathfrak M}}
\newcommand{\fN}{{\mathfrak N}}
\newcommand{\fO}{{\mathfrak O}}
\newcommand{\fP}{{\mathfrak P}}
\newcommand{\fQ}{{\mathfrak Q}}
\newcommand{\fR}{{\mathfrak R}}
\newcommand{\fS}{{\mathfrak S}}
\newcommand{\fT}{{\mathfrak T}}
\newcommand{\fU}{{\mathfrak U}}
\newcommand{\fV}{{\mathfrak V}}
\newcommand{\fW}{{\mathfrak W}}
\newcommand{\fX}{{\mathfrak X}}
\newcommand{\fY}{{\mathfrak Y}}
\newcommand{\fZ}{{\mathfrak Z}}

\newcommand{\bA}{{\mathbb A}}
\newcommand{\bB}{{\mathbb B}}
\newcommand{\bC}{{\mathbb C}}
\newcommand{\bD}{{\mathbb D}}
\newcommand{\bE}{{\mathbb E}}
\newcommand{\bF}{{\mathbb F}}
\newcommand{\bG}{{\mathbb G}}
\newcommand{\bH}{{\mathbb H}}
\newcommand{\bI}{{\mathbb I}}
\newcommand{\bJ}{{\mathbb J}}
\newcommand{\bK}{{\mathbb K}}
\newcommand{\bL}{{\mathbb L}}
\newcommand{\bM}{{\mathbb M}}
\newcommand{\bN}{{\mathbb N}}
\newcommand{\bO}{{\mathbb O}}
\newcommand{\bP}{{\mathbb P}}
\newcommand{\bQ}{{\mathbb Q}}
\newcommand{\bR}{{\mathbb R}}
\newcommand{\bT}{{\mathbb T}}
\newcommand{\bU}{{\mathbb U}}
\newcommand{\bV}{{\mathbb V}}
\newcommand{\bW}{{\mathbb W}}
\newcommand{\bX}{{\mathbb X}}
\newcommand{\bY}{{\mathbb Y}}
\newcommand{\bZ}{{\mathbb Z}}
\newcommand{\id}{{\rm id}}

\title[Full colored HOMFLYPT invariants]
{Full colored HOMFLYPT invariants, composite invariants and
congruent skein relation}
\author[Qingtao Chen and Shengmao Zhu]{Qingtao Chen and Shengmao Zhu}
\address{Mathematics Section \\International Center for Theoretical Physics \\Strada Costiera, 11\\
 Trieste, I-34151, Italy }
\email{qchen1@ictp.it}

\address{Center of Mathematical Sciences \\Zhejiang University \\Hangzhou, 310027, China }
\email{zhushengmao@gmail.com}

\keywords{Colored HOMFLYPT invariants, composite invariants, LMOV
type conjecture, Skein relations, Special polynomials}

\subjclass{Primary 57M25, Secondary 57M27}

\begin{abstract}
In this paper, we investigate the properties of the full colored
HOMFLYPT invariants in the full skein of the annulus $\mathcal{C}$.
We show that the full colored HOMFLYPT invariant has a nice
structure when $q\rightarrow 1$. The composite invariant is a
combination of the full colored HOMFLYPT invariants. In order to
study the framed LMOV type conjecture for composite invariants, we
introduce the framed reformulated composite invariant
$\check{\mathcal{R}}_{p}(\mathcal{L})$. By using the HOMFLY skein
theory, we prove that $\check{\mathcal{R}}_{p}(\mathcal{L})$  lies
in the ring $2\mathbb{Z}[(q-q^{-1})^2,t^{\pm 1}]$. Furthermore, we
propose a conjecture of congruent skein relation for
$\check{\mathcal{R}}_{p}(\mathcal{L})$ and prove it for certain
special cases.
\end{abstract}

\maketitle

\tableofcontents

\section{Introduction}
The HOMFLYPT polynomial is probably the most useful two variables
link invariant which was first discovered by Freyd-Yetter,
Lickorish-Millet, Ocneanu, Hoste and Przytychi-Traczyk. Based on the
work \cite{Turaev} of Turaev, the HOMFLYPT polynomial can be
obtained from the quantum invariant associated with the fundamental
representation of the quantum group $U_q(sl_N)$ by letting $q^N=t$.
More generally, if we consider the quantum invariants \cite{RT}
associated with arbitrary irreducible representations  of
$U_q(sl_N)$, by letting $q^N=t$, we get the colored HOMFLYPT
invariants $W_{\vec{A}}(\mathcal{L};q,t)$. See \cite{LZ} for
detailed definition of the colored HOMFLYPT invariants through
quantum group invariants of $U_q(sl_N)$. The colored HOMFLYPT
invariants have an equivalent definition through the satellite
invariants in HOMFLY skein theory which, we refer to \cite{Lu} for a
nice explanation of this equivalence.

From the view of HOMFLY skein theory, the colored HOMFLYPT
polynomial of $\mathcal{L}$ with $L$ components labeled by the
corresponding partitions $A^1,..,A^L$, can be identified through the
HOMFLYPT polynomial of the link $\mathcal{L}$ decorated by
$Q_{A^1},..,Q_{A^L}$ in the skein of the annuls $\mathcal{C}$.
Denote $\vec{A}=(A^1,..,A^L)\in \mathcal{P}^L$, the colored HOMFLYPT
polynomial of the link $\mathcal{L}$ can be defined by
\begin{align}
W_{\vec{A}}(\mathcal{L};q,t)=q^{-\sum_{\alpha=1}^Lk_{A^\alpha}w(\mathcal{K}_\alpha)}t^{-\sum_{\alpha=1}^{L}|A^\alpha|w(\mathcal{K}_\alpha)}
\langle\mathcal{L}\star\otimes_{\alpha=1}^LQ_{A^\alpha}\rangle
\end{align}
where $w(\mathcal{K}_\alpha)$ is the writhe number of the
$\alpha$-component $\mathcal{K}_\alpha$ of $\mathcal{L}$, the
bracket
$\langle\mathcal{L}\star\otimes_{\alpha=1}^LQ_{A^\alpha}\rangle$
denotes the framed HOMFLYPT polynomial of the satellite link
$\mathcal{L}\star\otimes_{\alpha=1}^LQ_{A^\alpha}$. In fact, the
basis elements $Q_{A^\alpha}$ used in the above definition of
colored HOMFLYPT invariant are lie in $\mathcal{C}_+$ which is the
subspace of the full skein of the annulus $\mathcal{C}$. In
\cite{MM}, the basis elements $Q_{\lambda,\mu}$ are constructed in
the full skein $\mathcal{C}$. In particular, when $\mu=\emptyset$,
$Q_{\lambda,\emptyset}=Q_{\lambda}$. So it is natural to construct
the satellite link invariant by using the elements
$Q_{\lambda,\mu}$. We introduce the full colored HOMFLYPT invariant
for a link $\mathcal{L}$ as
\begin{align}
&W_{[\lambda^1,\mu^1],[\lambda^2,\mu^2],..,[\lambda^L,\mu^L]}(\mathcal{L};q,t)\\\nonumber
&=q^{-\sum_{\alpha=1}^L(\kappa_{\lambda^\alpha}+\kappa_{\mu^\alpha})w(\mathcal{K}_\alpha)}
t^{-\sum_{\alpha=1}^L(|\lambda^\alpha|+|\mu^\alpha|)w(\mathcal{K}_{\alpha})}
\langle\mathcal{L}\star\otimes_{\alpha=1}^L
Q_{\lambda^\alpha,\mu^\alpha}\rangle.
\end{align}
We refer to Section 2 and 3 for a review of the HOMFLY skein theory
and the definition of the full colored HOMFLYPT invariant for an
oriented link. We define the special polynomial for the full colored
HOMFLYPT invariant for a link $\mathcal{L}$ with $L$ components as
follow:
\begin{align}
H_{[\lambda^1,\mu^1],..,[\lambda^L,\mu^L]}^\mathcal{L}(t)=\lim_{q\rightarrow
1}\frac{W_{[\lambda^1,\mu^1],..,[\lambda^L,\mu^L]}(\mathcal{L};q,t)}{\prod_{\alpha=1}^LW_{[\lambda^\alpha,\mu^\alpha]}(U;q,t)}.
\end{align}
In this paper, we prove
\begin{theorem}
For a link $\mathcal{L}$ with $L$ components $\mathcal{K}_\alpha,
\alpha=1,..,L$, we have
\begin{align}
H_{[\lambda^1,\mu^1],..,[\lambda^L,\mu^L]}^\mathcal{L}(t)=\prod_{\alpha=1}^{L}P_{\mathcal{K}_\alpha}(1,t)^{|\lambda^\alpha|+|\mu^\alpha|}.
\end{align}
where $P_{\mathcal{K}}(q,t)$ is the classical HOMFYPT polynomial.
\end{theorem}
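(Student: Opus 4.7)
The plan is to reduce the computation to satellites by power-sum elements and use the fact that, at $q=1$, each such satellite rescales a component's HOMFLYPT contribution by a simple scalar. The full skein $\mathcal{C}$ is a commutative polynomial ring freely generated by the power-sum elements $\{p_m : m\in\bZ\setminus\{0\}\}$ (with $p_m$ for $m>0$ the positive $m$-wrap around the annulus and $p_{-m}$ its orientation reverse), and each basis element $Q_{\lambda,\mu}$ admits a Koike-type expansion
\[
Q_{\lambda,\mu}=\sum_{(\rho^+,\rho^-)}\frac{\chi^{\lambda,\mu}_{\rho^+,\rho^-}}{z_{\rho^+}\,z_{\rho^-}}\,p_{\rho^+}\,p_{-\rho^-}
\]
with $q$-independent coefficients and $|\rho^+|+|\rho^-|=|\lambda|+|\mu|$.

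The heart of the argument is the power-sum factorisation at $q=1$: for every nonzero integer $m$ and every knot $\mathcal{K}$,
\[
\lim_{q\to 1}\frac{\langle\mathcal{K}\star p_m\rangle}{\langle U\star p_m\rangle}=P_{\mathcal{K}}(1,t)^{|m|},
\]
together with the multi-component analogue asserting that power-sum decorations on distinct components of a link factorise multiplicatively in the limit. This encodes two standard $q=1$ phenomena: (i) the HOMFLY skein relation makes the crossing change an abelian operation after the framing normalisation in (1.2), so that crossings between distinct components disappear and $p_m$-cabling on a single component reduces to a product of $|m|$ disjoint parallel HOMFLYPT contributions; and (ii) the cancellation of the framing factors $q^{-(\kappa_{\lambda^\alpha}+\kappa_{\mu^\alpha})w(\mathcal{K}_\alpha)}t^{-(|\lambda^\alpha|+|\mu^\alpha|)w(\mathcal{K}_\alpha)}$ against the writhe-dependent factors produced by cabling. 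Substituting the Koike expansion of each $Q_{\lambda^\alpha,\mu^\alpha}$ into the numerator and denominator of (1.3), the sum over $(\rho^{+,\alpha},\rho^{-,\alpha})$ factorises across components, the partition-dependent coefficients match between numerator and denominator, and only the scalar $\prod_\alpha P_{\mathcal{K}_\alpha}(1,t)^{|\lambda^\alpha|+|\mu^\alpha|}$ survives in the limit.

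The main obstacle will be the power-sum factorisation in the full-skein setting. In the positive-strand case ($\rho^-=\emptyset$) this is essentially the input behind Lin--Zheng's treatment of the ordinary special HOMFLYPT polynomial, but here one must handle power-sum generators $p_{-m}$ attached to negatively oriented strands on the same footing: cabling by $p_{-m}$ must rescale the component's HOMFLYPT by $P_{\mathcal{K}_\alpha}(1,t)^{|m|}$ just as $p_m$ does, and mixed-orientation crossings within or between components must become framing-only at $q=1$. Both follow by direct skein manipulation together with the invariance of classical HOMFLYPT under orientation reversal, but they require careful tracking of the writhes produced by the orientation-reversed cables. A secondary technical point is the interchange of the $q\to 1$ limit with the finite Koike sum, which is immediate because the coefficients $\chi^{\lambda,\mu}_{\rho^+,\rho^-}/(z_{\rho^+}z_{\rho^-})$ are independent of $q$.
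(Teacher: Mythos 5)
Your starting point---expanding each $Q_{\lambda^\alpha,\mu^\alpha}$ into power sums $P_{\rho}P_{\rho'}^{*}$ with $q$-independent character coefficients---is the same as the paper's. The genuine gap is in the concluding step, where you substitute these expansions into the numerator and denominator of the special polynomial and assert that, because the partition-dependent coefficients match and each term's ratio tends to $\prod_\alpha P_{\mathcal{K}_\alpha}(1,t)^{|\lambda^\alpha|+|\mu^\alpha|}$, the ratio of the sums has the same limit. That inference does not follow as stated: the individual brackets $\langle\mathcal{L}\star\otimes_\alpha P_{\rho^\alpha}P_{\rho'^\alpha}^{*}\rangle$ diverge as $q\to 1$ at different rates, with a pole in $z=q-q^{-1}$ of order equal to the number of components $\sum_\alpha\bigl(l(\rho^\alpha)+l(\rho'^\alpha)\bigr)$ of the satellite, so the limit of the ratio of the sums is governed entirely by the maximally singular terms, and one must rule out cancellation among them in the denominator. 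Concretely you need: (i) the unique term of maximal pole order is $\rho^\alpha=(1^{|\lambda^\alpha|})$, $\rho'^\alpha=(1^{|\mu^\alpha|})$; (ii) its coefficient $\prod_\alpha\chi_{\lambda^\alpha}(C_{(1^{|\lambda^\alpha|})})\chi_{\mu^\alpha}(C_{(1^{|\mu^\alpha|})})/\bigl(z_{(1^{|\lambda^\alpha|})}z_{(1^{|\mu^\alpha|})}\bigr)$ is nonzero; and (iii) the explicit value of the leading Laurent coefficient of the bracket of a link. The paper supplies exactly (i)--(iii) through the Lickorish--Millett theorem (Theorem 5.3), which bounds the $z$-order below by $1-L'$ for an $L'$-component link and identifies the coefficient of $z^{1-L'}$ as $t^{-2lk}(t-t^{-1})^{L'-1}\prod P_{\mathcal{K}}(1,t)$; none of this appears in your proposal.

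Once that dominance argument is in place, the lemma you call the heart of your argument---the $q\to1$ factorisation for cabling by $p_{\pm m}$ with $m\geq 2$---becomes unnecessary: only the all-parts-equal-to-one term survives, where the decoration is a disjoint union of $|\lambda^\alpha|+|\mu^\alpha|$ parallel (possibly orientation-reversed) copies of each component, and the Lickorish--Millett formula applies verbatim. As written, that lemma carries the real mathematical content of your route and is only asserted, not proved: for $m\geq 2$ the satellite $\mathcal{K}\star P_m$ is a $\{m\}^{-1}$-weighted combination of connected $m$-fold cables, and the claimed limit is a nontrivial fact (essentially the Lin--Zheng computation) requiring its own writhe bookkeeping, not a routine skein manipulation. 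So the outline is salvageable, but both of the steps that actually constitute the proof are missing.
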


Given a link $\mathcal{L}$ with $L$ components, for
$\vec{A}=(A^1,...,A^L), \ \vec{\lambda}=(\lambda^1,...,\lambda^L),
\vec{\mu}=(\mu^1,...,\mu^L)\in \mathcal{P}^L$. Let
$c_{\vec{\lambda},\vec{\mu}}^{\vec{A}}=\prod_{\alpha=1}^Lc_{\lambda^\alpha,\mu^\alpha}^{A^\alpha}$,
where $c_{\lambda^\alpha,\mu^\alpha}^{A^\alpha}$ denotes the
Littlewood-Richardson coefficient determined by the formula (3.14).
M. Mari\~no \cite{Mar} introduced the composite invariant
\begin{align}
H_{\vec{A}}(\mathcal{L})=\sum_{\vec{\lambda},\vec{\mu}}c_{\vec{\lambda},\vec{\mu}}^{\vec{A}}W_{[\lambda^1,\mu^1],..,[\lambda^L,\mu^L]}(\mathcal{L}).
\end{align}
And he formulated the LMOV type conjecture for
$H_{\vec{A}}(\mathcal{L})$  based on the topological
string/Chern-Simons large $N$ duality \cite{Witten2, GV, OV,LMV}.
More general, in this paper, we consider the framed composite
invariant $\mathcal{H}_{\vec{A}}(\mathcal{L})$ and the corresponding
LMOV type conjecture. We have checked that the LMOV type conjecture
for framed composite invariant holds for torus link $T(2,2k)$ with
small framing $\tau=(m,n)$.

In the joint work \cite{CLPZ} with K. Liu and P. Peng, for $\mu\in
\mathcal{P}$, we have used the skein element $P_{\mu}\in
\mathcal{C}_{|\mu|,0}$ to define the reformulate colored HOMFLYPT
invariant for a link $\mathcal{L}$ as follow:
\begin{align}
\mathcal{Z}_{\vec{\mu}}(\mathcal{L})=\langle \mathcal{L}\star
\otimes _{\alpha=1}^L P_{\mu^\alpha} \rangle, \
\check{\mathcal{Z}}_{\vec{\mu}}(\mathcal{L})=[\vec{\mu}]\check{\mathcal{Z}}_{\vec{\mu}}(\mathcal{L}),
\end{align}
where $\vec{\mu}=(\mu^1,...,\mu^L)\in \mathcal{P}^L$. From the view
of the HOMFLY skein theory, the reformulated colored HOMFLYPT
invariant $\mathcal{Z}_{\vec{\mu}}(\mathcal{L})$ (or
$\check{\mathcal{Z}}_{\vec{\mu}}(\mathcal{L})$) is simpler than the
colored HOMFLYPT invariant $W_{\vec{\mu}}(\mathcal{L})$, since the
expression of $P_{\vec{\mu}}$ is simpler than $Q_{\vec{\mu}}$ and
has the nice property, see \cite{CLPZ} for a detailed descriptions.
By using the HOMFLY skein theory, we prove in \cite{CLPZ} that the
reformulated colored HOMFLYPT invariants satisfy the following
integrality property.
\begin{theorem} \label{homflyintegrality}
For any link $\mathcal{L}$ with $L$ components,
\begin{align}
\check{\mathcal{Z}}_{\vec{\mu}}(\mathcal{L};q,t)\in
\mathbb{Z}[z^2,t^{\pm 1}].
\end{align}
where $z=q-q^{-1}$.
\end{theorem}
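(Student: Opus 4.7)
The plan is to compute the satellite invariant $\langle \mathcal{L}\star\otimes_\alpha P_{\mu^\alpha}\rangle$ directly inside the Hecke algebra, rather than via a change of basis to the $Q_\lambda$. Under the Turaev-type isomorphism $\mathcal{C}_+ \cong \Lambda$ with the ring of symmetric functions, $P_\mu$ corresponds to the power-sum symmetric function, and one has the Frobenius identity $P_\mu = \sum_{\lambda\vdash|\mu|}\chi_\lambda(\mu)\,Q_\lambda$; however, this route hides the denominators that are the essence of the theorem, so I would bypass it in favor of an intrinsic Hecke-algebraic expansion.

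The main technical step is to realize $P_n\in\mathcal{C}_n$ explicitly in the type-$A$ Hecke algebra $H_n$ as $\frac{1}{[n]}$ times an integer-coefficient combination of braid words in the generators $\sigma_i^{\pm 1}$. Substituting these into the satellite link $\mathcal{L}\star\otimes_\alpha P_{\mu^\alpha}$ and iteratively applying the HOMFLY skein relation $\sigma_i-\sigma_i^{-1}=z$ together with the framing relation, one rewrites
\begin{align*}
[\vec{\mu}]\cdot\langle \mathcal{L}\star\otimes_\alpha P_{\mu^\alpha}\rangle
\end{align*}
as an explicit $\mathbb{Z}[z^{\pm 1},t^{\pm 1}]$-combination of classical HOMFLYPT polynomials of sub-links of cablings of $\mathcal{L}$. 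The prefactor $[\vec{\mu}]=\prod_{\alpha}\prod_i[\mu^\alpha_i]$ is designed to absorb precisely the denominators produced by the Hecke idempotents, so $\check{\mathcal{Z}}_{\vec{\mu}}(\mathcal{L})$ lies in $\mathbb{Z}[z,t^{\pm 1}]$.

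To refine this to $\mathbb{Z}[z^2,t^{\pm 1}]$, I would invoke the standard involution on the HOMFLY skein module that sends $q\mapsto -q^{-1}$, realized geometrically by a reflection of tangles. The element $P_\mu$ has a definite parity under this involution, and combined with the parity of the framing corrections $q^{-\kappa_{\mu}w}$ and $t^{-|\mu|w}$ implicit in $\check{\mathcal{Z}}_{\vec{\mu}}$, it forces every odd power of $z$ to cancel, leaving only even powers.

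The hard part will be the denominator bookkeeping: verifying that the quantum integer $[n]$, and no larger factor, is the exact denominator of $P_n$ in the braid basis, and that the tensor-product denominator $[\vec{\mu}]$ is optimal across components. This boils down to an analysis of the spectrum of the Jucys--Murphy elements on the idempotent projecting to the one-hook component, and it is precisely the step where integrality, as opposed to mere rationality, is secured.
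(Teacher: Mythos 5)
A preliminary remark: this paper does not actually prove the theorem; it is quoted from \cite{CLPZ}, where it is established by essentially the strategy you outline in your first two paragraphs (expand each $P_{\mu^\alpha_i}$ as an integral combination of closed braids over a quantum-integer denominator, thereby reducing the satellite invariant to framed HOMFLYPT polynomials of honest links). So your architecture is the right one, but two of your concrete steps have genuine problems. First, the denominator of $P_n$ is not $[n]=q^{n}-q^{-n}$ but $\{n\}=[n]/(q-q^{-1})$: formula (3.17) of the paper reads $\{m\}P_m=\sum_{j=0}^{m-1}A_{m-1-j,j}$. Consequently $[\vec{\mu}]\,P_{\vec{\mu}}$ does not merely ``absorb the denominators''; it leaves an extra overall factor $z^{\ell(\vec{\mu})}$ with $\ell(\vec{\mu})=\sum_{\alpha}\ell(\mu^{\alpha})$, and this factor is not a bonus but a necessity. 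Each $A_{i,j}$ closes to a single curve in the annulus, so every term of $\mathcal{L}\star\otimes_{\alpha}\prod_{i}A_{\cdot,\cdot}$ is a genuine link with $\ell(\vec{\mu})$ components, and the framed HOMFLYPT polynomial of a $c$-component link has a pole of order $c$ at $z=0$ (already the $c$-component unlink gives $\left((t-t^{-1})/z\right)^{c}$). Your claim that the expression lands in $\mathbb{Z}[z,t^{\pm1}]$ is therefore unjustified without an upper bound on this pole order; the required input is the Lickorish--Millett lowest-degree theorem (Theorem 5.3 in the paper), which gives $\langle\mathcal{L}'\rangle=\sum_{g\geq 0}\hat{p}^{\mathcal{L}'}_{2g+1-c}(t)\,z^{2g-c}$ with integral coefficients, so that the leftover $z^{\ell(\vec{\mu})}$ exactly cancels the pole. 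You never invoke this, and without it the ``denominator bookkeeping'' cannot close, no matter how carefully the Jucys--Murphy spectrum is analyzed.

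Second, the evenness in $z$ cannot be obtained from the involution $q\mapsto -q^{-1}$, because that involution fixes $z$: $(-q^{-1})-(-q^{-1})^{-1}=q-q^{-1}=z$. It permutes the colors ($Q_{\lambda,\mu}\mapsto Q_{\lambda^{t},\mu^{t}}$, formula (3.31)) but carries no information about the parity of powers of $z$. The mirror involution $q\mapsto q^{-1}$, $t\mapsto t^{-1}$ does send $z\mapsto -z$, but it relates $\mathcal{L}$ to its mirror image rather than to itself, so it also yields no parity constraint for a fixed link. The correct source of the $\mathbb{Z}[z^{2},t^{\pm1}]$ refinement is again Lickorish--Millett: the powers of $z$ occurring in $\langle\mathcal{L}'\rangle$ are all congruent to $-c$ modulo $2$, whence $z^{\ell(\vec{\mu})}\langle\mathcal{L}\star\otimes_{\alpha}\prod_{i}A_{\cdot,\cdot}\rangle\in\mathbb{Z}[z^{2},t^{\pm1}]$ term by term. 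In short: both the integrality and the evenness hinge on a single classical theorem about the $z$-expansion of the HOMFLYPT polynomial of multi-component links, and that theorem is absent from your proposal.
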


In particular, when $\vec{\mu}=((p),...,(p))$ with $L$ row
partitions $(p)$, for $p\in\mathbb{Z}_+$ . We use the notation
$\check{\mathcal{Z}}_{p}(\mathcal{L};q,t)$ to denote the
reformulated colored HOMFLY-PT invariant
$\check{\mathcal{Z}}_{((p),...,(p))}(\mathcal{L};q,t)$ for
simplicity. We proposed two congruent skein relations for
$\check{\mathcal{Z}}_p(\mathcal{L};q,t)$ in \cite{CLPZ}.

In this paper, we introduce an analog reformulated invariant for
composite invariant. First, for any partition $\nu\in \mathcal{P}$,
we associate it a skein element $R_{\nu}\in \mathcal{C}$ by
\begin{align} \label{Rv}
R_{\nu}=\sum_{A}\chi_{A}(\nu)\sum_{\lambda,\mu}c_{\lambda,\mu}^{A}Q_{\lambda,\mu}.
\end{align}
In particular, when all the $\mu=\emptyset$ in (1.8), we have
$R_{\nu}=P_{\nu}\in \mathcal{C}_{|\nu|,0}$. We define the
reformulated composite invariant as follow:
\begin{align}
\mathcal{R}_{\vec{\nu}}(\mathcal{L};q,t)=\langle\mathcal{L}\star
\otimes_{\alpha=1}R_{\nu^\alpha}\rangle, \
\check{\mathcal{R}}_{\vec{\nu}}(\mathcal{L};q,t)=[\vec{\nu}]\mathcal{R}_{\vec{\nu}}(\mathcal{L};q,t).
\end{align}
Moreover, for $p\in \mathbb{Z}$, we use the notation
$\check{\mathcal{R}}_p(\mathcal{L};q,t)$ to denote the
$\check{\mathcal{R}}_{(p),..,(p)}(\mathcal{L};q,t)$ for simplicity.
The reformulated composite invariant
$\check{\mathcal{R}}_p(\mathcal{L};q,t)$ can be expressed by the
original reformulated invariants $\check{\mathcal{Z}}_{p}$.
\begin{theorem}
For a link $\mathcal{L}$ with $L$ components, we have
\begin{align}
\check{\mathcal{R}}_{p}(\mathcal{L};q,t)=\sum_{k=0}^L\sum_{1\leq
\alpha_1<\alpha_2<\cdots \alpha_k\leq
L}\check{\mathcal{Z}}_p(\mathcal{L}_{\alpha_1,\alpha_2,..,\alpha_k};q,t).
\end{align}
where $\mathcal{L}_{\alpha_1,\alpha_2,.,\alpha_k}$ is the link
obtained by reversing the orientations of the
$\alpha_1,..,\alpha_k$-th components of link $\mathcal{L}$.
\end{theorem}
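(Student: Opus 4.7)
The plan is to reduce the theorem to a clean algebraic identity in the annular skein,
\begin{equation*}
R_{(p)} \;=\; P_{(p)} \;+\; \bar{P}_{(p)},
\end{equation*}
where $\bar{P}_{(p)}:=\sum_{\mu\vdash p}\chi_{\mu}((p))\,Q_{\emptyset,\mu}$ denotes the power--sum element lying in the reverse--orientation subspace $\mathcal{C}_{0,p}$. Once this is in hand, the theorem follows by multilinearly expanding the decorations on the $L$ components and interpreting each $\bar{P}_{(p)}$--decoration as an orientation reversal of the corresponding component.

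To establish the displayed identity, first I would switch the order of summation in the definition \eqref{Rv} of $R_{(p)}$ to obtain
\begin{equation*}
R_{(p)} \;=\; \sum_{\lambda,\mu}Q_{\lambda,\mu}\Bigl(\sum_{A}c_{\lambda,\mu}^{A}\chi_{A}((p))\Bigr).
\end{equation*}
By the Littlewood--Richardson rule, the inner sum equals the value at a $p$--cycle of the induced character $\mathrm{Ind}_{S_{|\lambda|}\times S_{|\mu|}}^{S_{p}}(\chi_{\lambda}\otimes\chi_{\mu})$. The Frobenius induction formula expresses this as a sum over elements of $S_{p}$ conjugating a $p$--cycle into $S_{|\lambda|}\times S_{|\mu|}$; since a single $p$--cycle is never a product of two disjoint cycles supported on proper complementary subsets of $\{1,\dots,p\}$, the inner sum vanishes whenever both $\lambda$ and $\mu$ are nonempty. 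For $\lambda=\emptyset$ (resp.~$\mu=\emptyset$) the coefficient $c^{A}_{\lambda,\mu}$ reduces to a Kronecker delta and the inner sum collapses to $\chi_{\mu}((p))$ (resp.~$\chi_{\lambda}((p))$), producing the claimed decomposition.

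Next, the orientation--reversal involution on $\mathcal{C}$ sends $Q_{\emptyset,\mu}$ to the element obtained from $Q_{\mu}$ by reversing every strand, so decorating a component $\mathcal{K}_\alpha$ by $Q_{\emptyset,\mu}$ coincides with decorating the reversed component $\overline{\mathcal{K}}_\alpha$ by $Q_{\mu}$. Expanding $\bigotimes_{\alpha=1}^{L}R_{(p)}=\bigotimes_{\alpha=1}^{L}(P_{(p)}+\bar{P}_{(p)})$ componentwise produces $2^{L}$ terms indexed by subsets $S\subseteq\{1,\dots,L\}$ recording which components carry $\bar{P}_{(p)}$; applying the orientation--reversal observation on those components identifies the $S$--term with $\mathcal{Z}_{((p),\dots,(p))}(\mathcal{L}_{S})$, where $\mathcal{L}_{S}$ is $\mathcal{L}$ with the $S$--components reversed. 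Multiplying by $[\vec{\nu}]=[p]^{L}$ and rewriting $\sum_{S}$ as the double sum over $k=|S|$ and ordered tuples $1\le\alpha_{1}<\cdots<\alpha_{k}\le L$ yields the stated formula.

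The only nonformal step is the character--theoretic vanishing used in Step 1; once it is in place, the remainder amounts to multilinearity of the satellite bracket combined with the standard orientation--reversal symmetry of the annular skein. I expect the Frobenius induction argument to be the main obstacle only in the sense that it is the unique nontrivial computation, though it is itself short.
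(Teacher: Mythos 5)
Your proposal is correct and follows essentially the same route as the paper: the paper's Appendix computes $R_\nu$ for general $\nu$ via the identity $\sum_A\chi_A(\nu)c^A_{\lambda,\mu}=\sum_{B\cup C=\nu}\tfrac{z_\nu}{z_Bz_C}\chi_\lambda(B)\chi_\mu(C)$ and character orthogonality, which for $\nu=(p)$ collapses to exactly your key identity $R_{(p)}=P_p+P_p^*$ (your Frobenius--induction vanishing at a $p$-cycle is the same fact, argued directly for one-row partitions rather than extracted from the general formula). The subsequent multilinear expansion of $\bigotimes_{\alpha=1}^L(P_p+P_p^*)$ and the reinterpretation of a $P_p^*$-decoration as an orientation reversal of that component is precisely the paper's argument around formulas (7.11)--(7.16).
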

Combing Theorem 1.2, we obtain the following integrality result:
\begin{theorem}
For any link $\mathcal{L}$, we have
\begin{align}
\check{\mathcal{R}}_{p}(\mathcal{L};q,t) \in 2\mathbb{Z}[z^2,t^{\pm
1}].
\end{align}
\end{theorem}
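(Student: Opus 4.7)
The plan is to combine Theorem 1.4 with Theorem 1.2 and pair up subsets with their complements. By Theorem 1.4,
\begin{equation*}
\check{\mathcal{R}}_{p}(\mathcal{L};q,t) = \sum_{S \subseteq \{1,\dots,L\}} \check{\mathcal{Z}}_p(\mathcal{L}_S;q,t),
\end{equation*}
and by Theorem 1.2 each summand already lies in $\mathbb{Z}[z^2, t^{\pm 1}]$. Thus the new content of Theorem 1.5 is the additional factor of $2$, which I plan to extract via the involution $S \mapsto S^c$ on the index set.

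For each $S$ one checks directly that $\mathcal{L}_{S^c} = \overline{\mathcal{L}_S}$, since reversing the components outside $S$ is equivalent to first reversing those in $S$ and then reversing all orientations. If I can prove the symmetry $\check{\mathcal{Z}}_p(\mathcal{L}_S;q,t) = \check{\mathcal{Z}}_p(\overline{\mathcal{L}_S};q,t)$, then since $S \neq S^c$ whenever $L \geq 1$, the $2^L$ subsets partition into $2^{L-1}$ unordered pairs $\{S, S^c\}$, each contributing $2\,\check{\mathcal{Z}}_p(\mathcal{L}_S;q,t) \in 2\mathbb{Z}[z^2,t^{\pm 1}]$ by Theorem 1.2, and the conclusion follows upon summation.

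The main obstacle is the symmetry step: verifying that $\check{\mathcal{Z}}_p$ is invariant under simultaneous reversal of every orientation. I will argue this at the level of the satellite itself. Reversing all orientations of $\mathcal{L}$ reverses the orientation of every tubular neighborhood, so for any pattern $s \in \mathcal{C}$ the oriented satellite $\overline{\mathcal{L}} \star s$ coincides with $\overline{\mathcal{L} \star s}$ in $S^3$, as one checks strand by strand. The framed HOMFLYPT polynomial is invariant under global orientation reversal, because the HOMFLY skein relation is symmetric under simultaneously reversing both strands at a crossing and the writhe of each component is preserved. Consequently, $\langle \overline{\mathcal{L}} \star s \rangle = \langle \mathcal{L} \star s \rangle$ for every $s$, and specializing to $s = P_{(p)}^{\otimes L}$ while noting that $[\vec{p}] = [p]^L$ carries no orientation data gives $\check{\mathcal{Z}}_p(\overline{\mathcal{L}};q,t) = \check{\mathcal{Z}}_p(\mathcal{L};q,t)$, closing the argument.
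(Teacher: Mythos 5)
Your argument is correct and is essentially the paper's own proof: it combines the decomposition of Theorem 1.4 with the integrality of $\check{\mathcal{Z}}_p$ from Theorem 1.2, and extracts the factor of $2$ by pairing each subset $S$ with its complement via $\mathcal{L}_{S^c}=(\mathcal{L}_S)^*$ together with the orientation-reversal invariance of the satellite bracket, exactly as in equations (7.13), (7.15) and Remark 7.7. One cosmetic caution: the paper reserves the overline for the mirror involution ($q\mapsto q^{-1}$, $t\mapsto t^{-1}$, crossings switched) and uses $*$ for orientation reversal, so your $\overline{\mathcal{L}_S}$ should be written $(\mathcal{L}_S)^*$ to avoid a clash with that convention.
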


Motivated by the study of the framed LMOV type conjecture for
composite invariants. We proposed a congruent skein relation for the
reformulated composite invariant
$\check{\mathcal{R}}_{p}(\mathcal{L};q,t)$. When the crossing is the
linking between two different components of the link, we have the
following skein relation for $\check{\mathcal{R}}_1$ by applying the
classical skein relation for HOMFLYPT polynomial:
\begin{align}
\check{\mathcal{R}}_{1}(\mathcal{L}_{+};q,t)-\check{\mathcal{R}}_{1}(\mathcal{L}_{-};q,t)=[1]^{2}\left(
\check{\mathcal{R}}_{1}(\mathcal{L}_{0};q,t)-\check{\mathcal{R}}_{1}(\mathcal{L}_{\infty};q,t)\right).
\end{align}
where
$(\mathcal{L}_+,\mathcal{L}_-,\mathcal{L}_0,\mathcal{L}_\infty)$
denotes the quadruple appears in the classical Kauffman skein
relation. As to $\check{\mathcal{R}}_{p}(\mathcal{L};q,t)$, we
propose
\begin{conjecture}[Congruent skein relation for the reformulated composite invariants] For prime $p$, when the crossing is the linking between two different components of
the link, we have
\begin{align}
&\check{\mathcal{R}}_{p}(\mathcal{L}_{+};q,t)-\check{\mathcal{R}}_{p}(\mathcal{L}_{-};q,t)\\\nonumber
&\equiv(-1)^{p-1}p[p]^{2}\left(
\check{\mathcal{R}}_{p}(\mathcal{L}_{0};q,t)-\check{\mathcal{R}}_{p}(\mathcal{L}_{\infty};q,t)\right)
\mod [p]^2\{p\}^2.
\end{align}
where $[p]=q^p-q^{-p}$ and $\{p\}=\frac{q^p-q^{-p}}{q-q^{-1}}$.
\end{conjecture}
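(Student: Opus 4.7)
My plan is to pull the conjectured congruence back from the reformulated composite invariant $\check{\mathcal{R}}_p$ to the reformulated colored HOMFLYPT invariant $\check{\mathcal{Z}}_p$ via Theorem 1.4, and then, for the ``certain special cases'' at hand, to invoke the corresponding congruent skein relation for $\check{\mathcal{Z}}_p$ proposed in \cite{CLPZ}. In this way the main analytic work is concentrated in the $\check{\mathcal{Z}}_p$-congruence, and the passage to composite invariants becomes a combinatorial bookkeeping problem over orientation-reversed sublinks.

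Concretely, I would first apply Theorem 1.4 to each of $\mathcal{L}_+,\mathcal{L}_-,\mathcal{L}_0,\mathcal{L}_\infty$, so that both sides of the desired congruence decompose as sums over the subsets $S\subseteq\{1,\ldots,L\}$ (suitably reinterpreted after the smoothings merge components). Since the distinguished crossing is between two distinct components, say $\mathcal{K}_i$ and $\mathcal{K}_j$, I would then split the $S$-sum according to the parity of $|S\cap\{i,j\}|$. For even parity, the reoriented quadruple $(\mathcal{L}_+(S),\mathcal{L}_-(S),\mathcal{L}_0(S),\mathcal{L}_\infty(S))$ remains a genuine Kauffman skein quadruple at that crossing, and the $\check{\mathcal{Z}}_p$-congruence of \cite{CLPZ} applies verbatim. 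For odd parity, reversing the orientation of exactly one of the two components at the crossing simultaneously flips the sign of the crossing and exchanges the two smoothings; these two compensating sign flips leave the same right-hand contribution $(-1)^{p-1}p[p]^2\bigl(\check{\mathcal{Z}}_p(\mathcal{L}_0(S))-\check{\mathcal{Z}}_p(\mathcal{L}_\infty(S))\bigr)$ modulo $[p]^2\{p\}^2$. Summing over $S$ and reassembling via Theorem 1.4 then recovers the conjectured congruence for $\check{\mathcal{R}}_p$.

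For the special cases of the $\check{\mathcal{Z}}_p$-congruence itself, I would mimic the HOMFLY skein techniques used in the proof of Theorem 1.2. For small primes (especially $p=2$), the power-sum element $P_{(p)}\in\mathcal{C}_{p,0}$ has a short explicit expansion in a geometric basis of $\mathcal{C}$, and one can apply the classical HOMFLYPT skein relation at the distinguished crossing to compute $\check{\mathcal{Z}}_p(\mathcal{L}_+)-\check{\mathcal{Z}}_p(\mathcal{L}_-)$ termwise. The key step is to show that the residual contributions, beyond $(-1)^{p-1}p[p]^2(\check{\mathcal{Z}}_p(\mathcal{L}_0)-\check{\mathcal{Z}}_p(\mathcal{L}_\infty))$, lie in the ideal $([p]^2\{p\}^2)$, which is a quantum-integer analog of Fermat's little theorem.

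The main obstacle is precisely this residual estimate: for general $p$, one needs fine control of the $q$-integrality of the structure constants of $P_{(p)}$ in the skein modulo $[p]^2\{p\}^2$, which is not currently available. A secondary difficulty is the odd-parity contribution to the $S$-sum, where the number of components drops by one upon smoothing and the labeling convention for $\mathcal{L}_0(S)$ and $\mathcal{L}_\infty(S)$ must be reconciled with the orientation reversal; I expect this to follow from the invariance of $\check{\mathcal{Z}}_p$ under global orientation reversal, itself a consequence of the conjugation symmetry of the HOMFLYPT skein of the annulus.
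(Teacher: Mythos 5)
The statement you are addressing is a conjecture: the paper offers no general proof, only an explicit verification of one family of special cases (Theorem 7.8: $p=2$, $\mathcal{L}_+=T(2,2k+2)$, $\mathcal{L}_-=T(2,2k)$, $\mathcal{L}_0=T(2,2k+1)$, $\mathcal{L}_\infty=U(-2k-1)$), so your proposal can at best be a conditional reduction, which you acknowledge. Your opening move — decomposing each $\check{\mathcal{R}}_p$ via Theorem 1.4 into a sum of $\check{\mathcal{Z}}_p$ over orientation-reversed sublinks — is exactly how the paper begins its proof of the special case. But your parity bookkeeping contains a genuine error. The linking-crossing case of Conjecture \ref{conjcongruentskeinhomfly} has only $\check{\mathcal{Z}}_p(\mathcal{L}_0)$ on its right-hand side; there is no $\mathcal{L}_\infty$ term there, because $\check{\mathcal{Z}}_p$ is an invariant of oriented links and, for a fixed orientation state, only one of the two smoothings at the crossing is orientation-compatible. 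Consequently the even-parity subsets contribute only $(-1)^{p-1}p[p]^{2}\check{\mathcal{Z}}_p(\mathcal{L}_0(S))$, while the odd-parity subsets — where reversing exactly one of the two strands turns the positive crossing of $\mathcal{L}_+(S)$ into a negative one whose oriented smoothing is the $\infty$-smoothing of the original — contribute only $-(-1)^{p-1}p[p]^{2}\check{\mathcal{Z}}_p(\mathcal{L}_\infty(S'))$. The difference $\check{\mathcal{R}}_p(\mathcal{L}_0)-\check{\mathcal{R}}_p(\mathcal{L}_\infty)$ arises precisely from this split; your claim that each subset contributes \emph{both} terms would double-count, and you would also need to reconcile the prefactors $[p]^{L}$ versus $[p]^{L-1}$ and the two-to-one correspondence between orientation states of $\mathcal{L}_\pm$ and of the $(L-1)$-component smoothed links.

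Second, even in the special case the paper proves, the $\check{\mathcal{Z}}_p$-congruence is not ``applied verbatim'' to the reoriented links: the link $(T(2,2k))^{*}$ obtained by reversing one component is not among the proven instances of Conjecture \ref{conjcongruentskeinhomfly} (CLPZ Theorem 4.4 covers only the standard torus-link triple). The paper instead computes $\check{\mathcal{Z}}_{(2)(2)}((T(2,2k))^{*})$ explicitly from the torus-link formula (\ref{toruslinkformula}), the unknot expansion (\ref{unknotformula}), and the congruent framing-change formula of \cite{CLPZ}, and then checks by hand that the residue vanishes modulo $[2]^{2}\{2\}^{2}$. So the combinatorial half of your plan mis-locates the source of the $\mathcal{L}_\infty$ term and silently assumes the $\check{\mathcal{Z}}_p$-congruence for reoriented links, which is itself open; and the analytic half (the residual estimate modulo $[p]^{2}\{p\}^{2}$ for general $p$) is, as you say, not available. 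As it stands the proposal neither proves the conjecture nor reproduces the paper's verification of its special case.
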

We have tested a lot of examples for the above conjecture. In
particular, we prove the following theorem in Section 7.
\begin{theorem}
When $p=2$, the conjecture holds for $\mathcal{L}_+=T(2,2k+2)$,
$\mathcal{L}_-=T(2,2k)$, $\mathcal{L}_0=T(2,2k+1)$ and
$\mathcal{L}_{\infty}=U(-2k-1)$, where $U(-2k-1)$ denotes the unknot
with $2k+1$ negative kinks.
\end{theorem}

\bigskip

The rest of this paper is organized as follows. In Section 2, we introduce the HOMFLY
skein model. In Section 3, we define the full colored HOMFLYPT invariants via HOMFLY
skein theory. We compute full colored HOMFLYPT invariants for torus links in Section 4. We investigate limit behavior of full colored HOMFLYPT invariants in Section 5. In Section 6, we first introduce the composite invariants associated to full colored HOMFLYPT invariants and review the LMOV type conjecture for these composite invariants. Then we formulate a framed version LMOV type conjecture for framed composite invariants. We prove this framed LMOV type conjecture in certain special cases. In Section 7, we first review the conjecture of congruent skein relations for colored HOMFLYPT invariants then we propose a new conjecture of congruent skein relations for composite colored HOMFLYPT invariants. We prove certain examples for this conjecture. In Appendix, we provide detail computation rules for (reformulated) composite HOMFLYPT invariants.

\bigskip

{\bf Acknowledgements.} The authors appreciate the collaboration with Kefeng Liu and Pan Peng in this area and many valuable discussion with them within the past years. The authors also thank Rinat Kashaev, Jun Murakami and Nicolai Reshetikhin for their interests, encouragement and discussion.

The research of S. Zhu is supported by the National Science
Foundation of China grant No. 11201417 and the China Postdoctoral
Science special Foundation No. 2013T60583.

\section{HOMFLY Skein theory}
We follow the notations in \cite{HM}. Define the coefficient ring
$\Lambda=\mathbb{Z}[q^{\pm 1}, t^{\pm 1} ]$ with the elements
$q^{k}-q^{-k}$ admitted as denominators for $k\geq 1$. Let $F$ be a
planar surface, the framed HOMFLY-PT skein $\mathcal{S}(F)$ of $F$
is the $\Lambda$-linear combination of the orientated tangles in
$F$, modulo the two local relations as showed in Figure 1 where
$z=q-q^{-1}$,
\begin{figure}
\begin{center}
\includegraphics[width=150 pt]{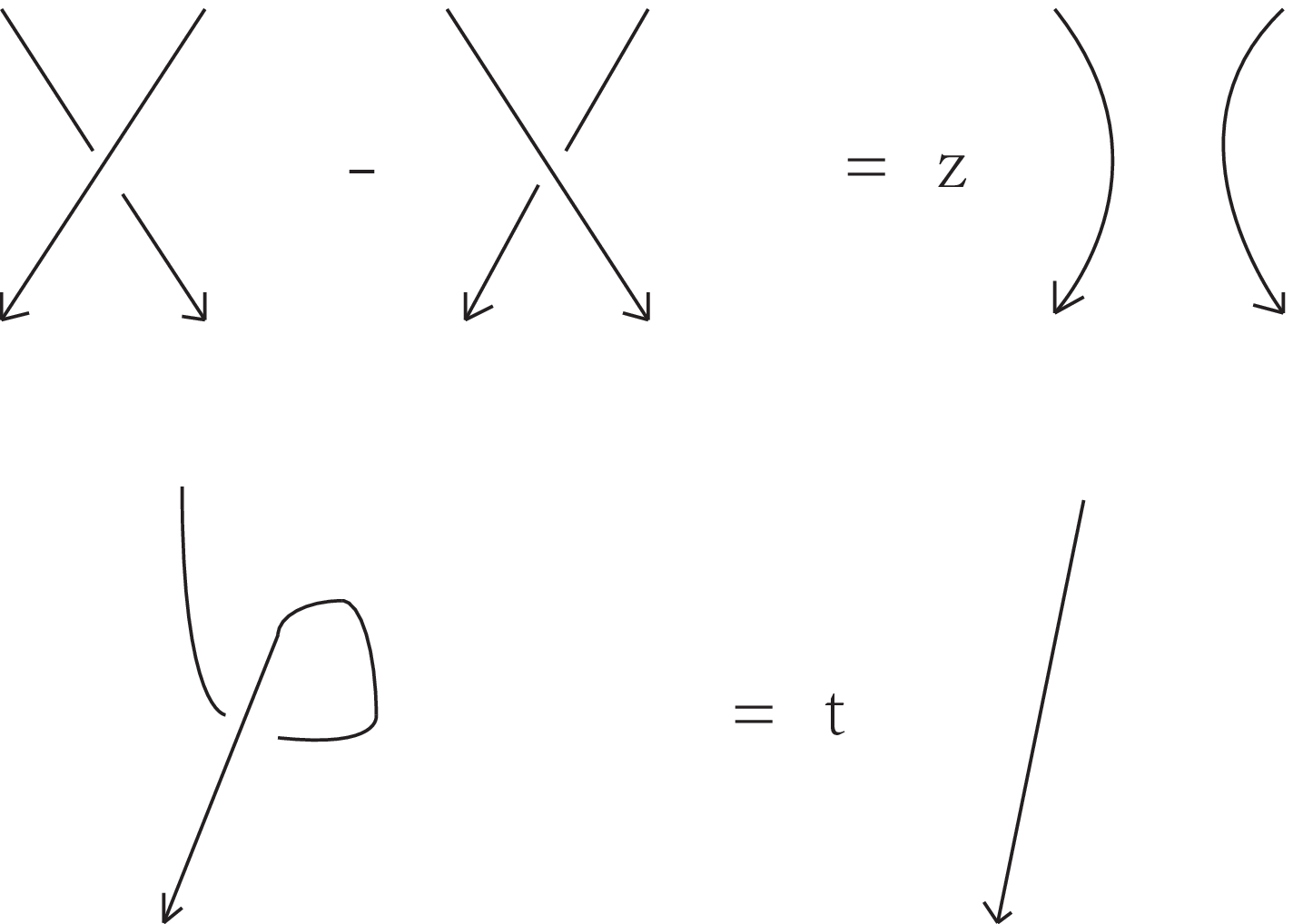}
\caption{}
\end{center}
\end{figure}
It is easy to follow that the removal an unknot is equivalent to
time a scalar $s=\frac{t-t^{-1}}{q-q^{-1}}$, i.e we have the
relation
 showed in Figure 2.
\begin{figure}
\begin{center}
\includegraphics[width=100 pt]{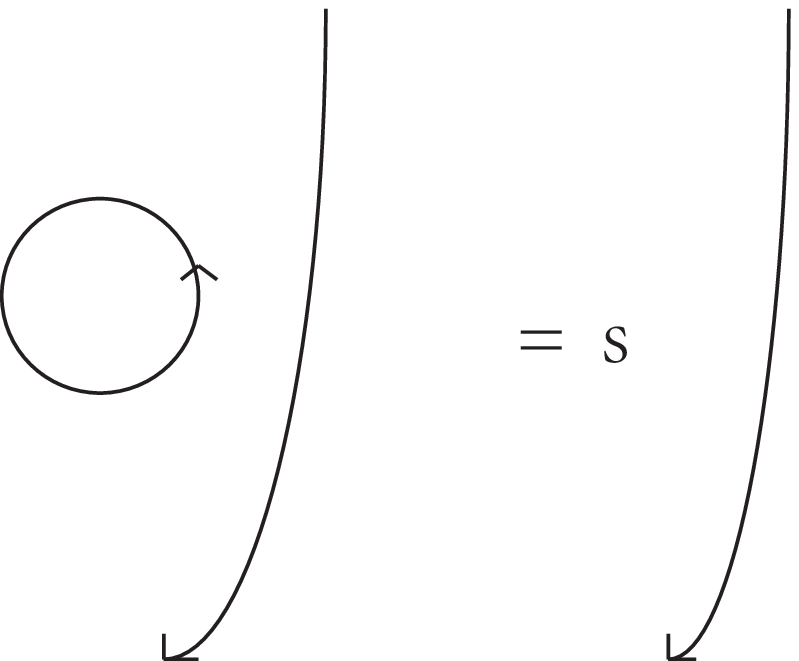}
\caption{}
\end{center}
\end{figure}

\subsection{The plane} When $F=\mathbb{R}^2$, it is easy to follow
that every element in $\mathcal{S}(F)$ can be represented as a
scalar in $\Lambda$. For a link $\mathcal{L}$ with a diagram
$D_{\mathcal{L}}$, the resulting scalar $\langle D_{\mathcal{L}}
\rangle \in \Lambda$ is the framed HOMFLYPT polynomial
 of the link $\mathcal{L}$. In the
following, we will also use the notation $\langle\mathcal{L}\rangle$
to denote the $\langle D_{\mathcal{L}}\rangle$ for simplicity. In
particular, as to the unknot $U$, we have $\langle U
\rangle=\frac{t-t^{-1}}{q-q^{-1}}$.

The classical HOMFLYPT polynomial is defined by
\begin{align} \label{defhomfly}
P_{\mathcal{L}}(q,t)=\frac{t^{-w(\mathcal{L})}\langle
\mathcal{L}\rangle}{\langle U\rangle},
\end{align}
where $w(\mathcal{L})$ is the writhe number of the link
$\mathcal{L}$. Particularly, $P_{U}(q,t)=1$.
\begin{remark}
In some physical literatures, such as \cite{Mar}, the self-writhe
$\bar{w}(\mathcal{L})$ instead of $w(\mathcal{L})$ is used in the
definition of the HOMFLYPT polynomial (\ref{defhomfly}). The
relationship between them is
\begin{align} \label{writhformula}
w(\mathcal{L})=\bar{w}(\mathcal{L})+2lk(\mathcal{L}),
\end{align}
where $lk(\mathcal{L})$ is the total linking number of the link
$\mathcal{L}$. By definition
$\bar{w}(\mathcal{L})=\sum_{\alpha=1}^{L}w(\mathcal{K}_\alpha)$, if
$\mathcal{L}$ is a link with $L$ components $\mathcal{K}_{\alpha}$,
$\alpha=1,...,L$.
\end{remark}

\subsection{The rectangle}
We write $H_{n,m}(q,t)$ for the skein $\mathcal{S}(F)$ of
$(n,m)$-tangle where $F$ is the rectangle with $n$ inputs and $m$
outputs at the top and matching inputs and outputs at the bottom.
There is a natural algebra structure on $H_{n,m}$ by placing tangles
one above the another. When $m=0$, we write $H_n(q,t)=H_{n,0}(q,t)$.

The algebra $H_{n,m}^N(q)$ is a generalization of the Iwahori-Hecke
algebra of type $A$ constructed in \cite{KM}.
\begin{definition}
For integers $n,m\geq 0$ and $N\geq n+m$, we define $H^N_{n,m}(q)$
to be the associative $\mathbb{C}(q)$-algebra with unit presented by
generators $g_1, g_2,\cdots ,g_{n-1}$, $e$ (if $m=1$), $g_1^*,
g_2^*,\cdots , g_{m-1}^*$ (if $m \geq 2 $) and the relations:

(1) $g_ig_j=g_jg_i$, \ $1\leq i,j \leq n-1, |i-j|\geq 2$;

(2) $g_ig_{i+1}g_i=g_{i+1}g_{i}g_{i+1}$, \ $1\leq i\leq n-2$;

(3) $(g_i-q)(g_i+q^{-1})=0$, \ $1\leq i\leq n-1$;

(4) $g_i^*g_j^*=g_j^*g_i^*$, \ $1\leq i,j \leq m-1, |i-j|\geq 2$;

(5) $g_i^*g_{i+1}^*g_i^*=g_{i+1}^*g_{i}^*g_{i+1}^*$, \ $1\leq i\leq
m-2$;

(6) $(g_i^*-q)(g_i^*+q^{-1})=0$, \ $1\leq i\leq m-1$;

(7) $e^2=[N]e$;

(8) $eg_i=g_ie$, \ $1\leq i\leq n-2$;

(9) $eg_{i}^*=g_{i}^{*}e$, \ $2\leq i \leq m-1$;

(10) $g_ig_{j}^*=g_j^*g_{i}$, \ $1\leq i\leq n-1, 1\leq j \leq m-1$;

(11) $eg_{n-1}e=q^N e$;

(12) $eg_1^{*}e=q^N e$;

(13) $eg_{n-1}^{-1}g_1^*e(g_{n-1}-g_1^*)=0$;

(14) $(g_{n-1}-g_1^*)eg_{n-1}^{-1}g_1^*e=0$.

\end{definition}

If we take $t=q^N$, the skein $H_{n,m}(q,q^N)\cong H_{n,m}^N(q)$.

\subsection{The annulus}
Let $\mathcal{C}$ be the HOMFLY skein of the annulus, i.e.
$\mathcal{C}=\mathcal{S}(S^1\otimes I)$. $\mathcal{C}$ is a
commutative algebra with the product induced by placing one annulus
outside another. Let $T\in H_{n,m}$ be a $(n,m)$-tangle, we denote
by $\hat{T}$ its closure in the annulus. This is a $\Lambda$-linear
map, whose image we write $\mathcal{C}_{n,m}$. It is clear that
every diagram in the annulus presents an elements in some
$\mathcal{C}_{n,m}$.

As an algebra, $\mathcal{C}$ is freely generated by the set $\{A_m:
m\in \mathbb{Z}\}$, $A_m$ for $m\neq 0$ is the closure of the braid
$\sigma_{|m|-1}\cdots \sigma_2\sigma_1$, and $A_0$ is the empty
diagram \cite{Turaev2}. It follows that $\mathcal{C}$ contains two
subalgebras $\mathcal{C}_{+}$ and $\mathcal{C}_{-}$ which are
generated by $\{A_m: m\in \mathbb{Z}, m\geq 0\}$ and $\{A_m:m\in
\mathbb{Z}, m\leq 0\}$.  The algebra $\mathcal{C}_+$ is spanned by
the subspace $\mathcal{C}_{n,0}$. There is a good basis
$\{Q_{\lambda}\}$ of $\mathcal{C}_+$ consisting of the closures of
certain idempotents of Hecke algebra $H_{n,0}(q,t)$.

In \cite{HM}, R. Hadji and H. Morton constructed the basis elements
$\{Q_{\lambda,\mu}\}$ explicitly for $\mathcal{C}$. We will review
this construction in next section.

\subsection{Skein involutions}
For every surface $F$, first we can define the mirror map
$\bar{\quad}$: $\mathcal{S}(F)\rightarrow \mathcal{S}(F)$ as
follows. For a tangle $T$, we define the mirror $\bar{T}$ to be $T$
with all its crossings switched. For the coefficient ring $\Lambda$,
we define the mirror by $\bar{q}=q^{-1},\ \bar{t}=t^{-1}$.

For the annulus $S^1\times I$, rotation the diagrams in $S^1\times
I$ by $\pi$ about the horizontal axis through the center of annulus
induces a map $*: \mathcal{C}\rightarrow \mathcal{C}$. It is easy to
see that $(A_{m})^*=A_{-m}$, $(\mathcal{C}_+)^*=\mathcal{C}_-$ and
$(\mathcal{C}_{n,m})^*=\mathcal{C}_{m,n}$.

\section{Full colored HOMFLYPT invariants}
\subsection{Partitions and symmetric functions}
A partition $\lambda$ is a finite sequence of positive integers
$(\lambda_1,\lambda_2,..)$ such that
\begin{align}
\lambda_1\geq \lambda_2\geq\cdots
\end{align}
The length of $\lambda$ is the total number of parts in $\lambda$
and denoted by $l(\lambda)$. The degree of $\lambda$ is defined by
\begin{align}
|\lambda|=\sum_{i=1}^{l(\lambda)}\lambda_i.
\end{align}
If $|\lambda|=d$, we say $\lambda$ is a partition of $d$ and denoted
as $\lambda\vdash d$. The automorphism group of $\lambda$, denoted
by Aut($\lambda$), contains all the permutations that permute parts
of $\lambda$ by keeping it as a partition. Obviously, Aut($\lambda$)
has the order
\begin{align}
|\text{Aut}(\lambda)|=\prod_{i=1}^{l(\lambda)}m_i(\lambda)!
\end{align}
where $m_i(\lambda)$ denotes the number of times that $i$ occurs in
$\lambda$. We can also write a partition $\lambda$ as
\begin{align}
\lambda=(1^{m_1(\lambda)}2^{m_2(\lambda)}\cdots).
\end{align}

Every partition can be identified as a Young diagram. The Young
diagram of $\lambda$ is a graph with $\lambda_i$ boxes on the $i$-th
row for $j=1,2,..,l(\lambda)$, where we have enumerate the rows from
top to bottom and the columns from left to right.

Given a partition $\lambda$, we define the conjugate partition
$\lambda^t$ whose Young diagram is the transposed Young diagram of
$\lambda$ which is derived from the Young diagram of $\lambda$ by
reflection in the main diagonal.

Denote by $\mathcal{P}$ the set of all partitions. We define the
$n$-th Cartesian product of $\mathcal{P}$ as
$\mathcal{P}^n=\mathcal{P}\times \cdots \times\mathcal{P}$. The
elements in $\mathcal{P}^n$ denoted by $\vec{A}=(A^1,..,A^n)$ are
called partition vectors.

The following numbers associated with a given partition $\lambda$
are used frequently in this paper:
\begin{align}
z_\lambda&=\prod_{j=1}^{l(\lambda)}j^{m_{j}(\lambda)}m_j(\lambda)!,\\
k_{\lambda}&=\sum_{j=1}^{l(\lambda)}\lambda_j(\lambda_j-2j+1).
\end{align}
Obviously, $k_\lambda$ is an even number and
$k_\lambda=-k_{\lambda^t}$.

The $m$-th complete symmetric function $h_m$ is defined by its
generating function
\begin{align}
H(t)=\sum_{m\geq 0}h_mt^m=\prod_{i\geq 1}\frac{1}{(1-x_it)}.
\end{align}
The $m$-th elementary symmetric function $e_m$ is defined by its
generating function
\begin{align}
E(t)=\sum_{m\geq 0}e_mt^m=\prod_{i\geq 1}(1+x_it).
\end{align}
Obviously,
\begin{align}
H(t)E(-t)=1.
\end{align}

The power sum symmetric function of infinite variables
$x=(x_1,..,x_N,..)$ is defined by
\begin{align}
p_{n}(x)=\sum_{i}x_i^n.
\end{align}
Given a partition $\lambda$, define
\begin{align}
p_\lambda(x)=\prod_{j=1}^{l(\lambda)}p_{\lambda_j}(x).
\end{align}
The Schur function $s_{\lambda}(x)$ is determined by the Frobenius
formula
\begin{align}
s_\lambda(x)=\sum_{|\mu|=|\lambda|}\frac{\chi_{\lambda}(C_\mu)}{z_\mu}p_\mu(x).
\end{align}
where $\chi_\lambda$ is the character of the irreducible
representation of the symmetric group $S_{|\mu|}$ corresponding to
$\lambda$. $C_\mu$ denotes the conjugate class of symmetric group
$S_{|\mu|}$ corresponding to partition $\mu$. The orthogonality of
character formula gives
\begin{align}
\sum_A\frac{\chi_A(C_\mu) \chi_A(C_\nu)}{z_\mu}=\delta_{\mu \nu}.
\end{align}

For $\lambda,\mu,\nu\in \mathcal{P}$, we define the
littlewood-Richardson coefficient $c_{\lambda,\mu}^{\nu}$ as
\begin{align}
s_{\lambda}(x)s_{\mu}(x)=\sum_{\nu}c_{\lambda,\mu}^{\nu}s_{\nu}(x).
\end{align}
It is easy to see that $c_{\lambda,\mu}^{\nu}$ can be expressed by
the characters of symmetric group by using the Frobenius formula
\begin{align}
c_{\lambda,\mu}^{\nu}=\sum_{\rho,\tau}\frac{\chi_{\lambda}(C_{\rho})}{z_{\rho}}\frac{\chi_{\lambda}(C_{\tau})}{z_{\tau}}\chi_{\nu}(C_{\rho\cup
\tau}).
\end{align}

\subsection{Basic elements in $\mathcal{C}$}
Given a permutation $\pi\in S_m$ with the length $l(\pi)$, let
$\omega_\pi$ be the positive permutation braid associated to $\pi$.
We have $l(\pi)=w(\omega_\pi)$, the writhe number of the braid
$\omega_{\pi}$.

 We define the quasi-idempotent element in $H_m$:
\begin{align}
a_m=\sum_{\pi\in S_m}q^{l(\pi)}\omega_{\pi}
\end{align}
Let element $h_m\in \mathcal{C}_{m,0}$ be the closure of the
elements $\frac{1}{\alpha_m}a_m\in H_m$, i.e
$h_m=\frac{1}{\alpha_m}\hat{a}_m$. Where $\alpha_m$ is determined by
the equation $a_ma_m=\alpha_m a_m$, it gives
$\alpha_m=q^{m(m-1)/2}\prod_{i=1}^m\frac{q^i-q^{-i}}{q-q^{-1}}$.

The skein $\mathcal{C}_+$ ($\mathcal{C}_-$) is spanned by the
monomials in $\{h_m\}_{m\geq 0}$ ($\{h_k^*\}_{k\geq 0}$). The whole
skein $\mathcal{C}$ is spanned by the monomials in $\{h_m,
h_k^*\}_{m,k\geq 0}$. $\mathcal{C}_+$ can be regarded as the ring of
symmetric functions in variables $x_1,..,x_N,..$ with the
coefficient ring $\Lambda$. In this situation, $\mathcal{C}_{m,0}$
consists of the homogeneous functions of degree $m$. The power sum
$P_m=\sum x_i^m$ are symmetric functions which can be represented in
terms of the complete symmetric functions, hence $P_m\in
\mathcal{C}_{m,0}$. Moreover, we have the identity
\begin{align}
\{m\}P_m=X_m=\sum_{j=0}^{m-1}A_{m-1-j,j}.
\end{align}
where $\{m\}=\frac{q^m-q^{-m}}{q-q^{-1}}$ and $A_{i,j}$ is the
closure of the braid $\sigma_{i+j}\sigma_{i+j-1}\cdots
\sigma_{j+1}\sigma_{j}^{-1}\cdots \sigma_1^{-1}$. Given a partition
$\mu$, we define
\begin{align}
P_{\mu}=\prod_{i=1}^{l(\mu)}P_{\mu_i}.
\end{align}

\subsection{The meridian maps of $\mathcal{C}$}
Take a diagram $X$ in the annulus and link it once with a simple
meridian loop, oriented in either direction, to give diagrams
$\varphi(X)$ and $\bar{\varphi}(x)$ in the annulus. This induces
linear endmorphisms $\varphi, \bar{\varphi}$ of the skein
$\mathcal{C}$, called the meridian maps. Each space
$\mathcal{C}_{n,m}$ is invariant under $\varphi$ and $\bar{\varphi}$
\cite{MH}.

It is showed in \cite{Lu} that the eigenvectors of $\varphi$ on
$\mathcal{C}_{n,0}$ are identified with $Q_{\lambda}$, the closure
of the idempotents in Hecke algebra $H_n$. Moreover, $Q_\lambda$ can
be expressed as explicit integer polynomials in $\{h_m\}_{m\geq 0}$.
Then, in \cite{HM}, Hadji and Morton constructed the eigenvectors of
$\varphi$ on the whole skein $\mathcal{C}$ as follow.

\subsection{Construction of the elements $Q_{\lambda,\mu}$}
Given two partitions $\lambda, \mu$ with $l$ and $r$ parts. We first
construct a $(l+r)\times (l+r)$-matrix $M_{\lambda,\mu}$ with
entries in $\{h_m, h_k^*\}_{m,k\in \mathbb{Z}}$ as follows, where we
have let $h_{m}=0$, if $m<0$ and $h_{k}^*=0$ if $k <0$.

\begin{align}
M_{\lambda,\mu}=
\begin{pmatrix}
h_{\mu_{r}}^* & h_{\mu_{r}-1}^* & \cdots & h_{\mu_{r}-r-l+1}^* \\
h_{\mu_{r-1}+1}^* & h_{\mu_{r-1}}^* & \cdots & h_{\mu_{r-1}-r-l}^*\\
\cdot & \cdot & \cdots & \cdot \\
h_{\mu_1+(r-1)}^* & h_{\mu_1+(r-2)}^* & \cdots & h_{\mu_1-l}^*\\
h_{\lambda_1-r} & h_{\lambda_1-(r-1)} & \cdots & h_{\lambda_1+l-1}\\
\cdot & \cdot & \cdots & \cdot \\
h_{\lambda_l-l-r+1} & h_{\lambda_l-s-r+2} & \cdots & h_{\lambda_l}
\end{pmatrix}
\end{align}
It is easy to note that the subscripts of the diagonal entries in
the $h$-rows are the parts $\lambda_1,\lambda_2,...,\lambda_l$ of
$\lambda$ in order, while the subscripts of the diagonal entries in
the $h^*$-rows are the parts $\mu_1,\mu_2,..,\mu_r$ of $\mu$ in
reverse order.

Then, $Q_{\lambda,\mu}$ is defined as the determinant of the matrix
$M_{\lambda,\mu}$.
\begin{align}
Q_{\lambda,\mu}=\det M_{\lambda,\mu}.
\end{align}

\begin{example}
For two partitions $\lambda=(4,2,2)$ and $\mu=(3,2)$. Then
\begin{align}
Q_{\lambda,\mu}=\det
\begin{pmatrix}
h_{2}^* & h_{1}^* & 1 & 0 & 0 \\
h_{4}^* & h_{3}^* & h_2^* & h_1^* & 1 \\
h_{2} & h_{3} & h_4 & h_5 & h_6 \\
0 & 1 & h_1 & h_2 & h_3 \\
0 & 0 & 1 & h_1 & h_2
\end{pmatrix}.
\end{align}
\end{example}

Given two partitions $\lambda, \mu$, define
\begin{align}
k_{\lambda,\mu}=(q-q^{-1})\left(t\sum_{x\in
\lambda}q^{2c(x)}-t^{-1}\sum_{x\in\mu}q^{-2c(x)}\right)+\frac{t-t^{-1}}{q-q^{-1}}
\end{align}
where $c(x)=j-i$ is the content of the cell in row $i$ and column
$j$ of the diagram. It is showed in \cite{MH} that the set
$k_{\lambda,\mu}$ forms a complete set of eigenvalues of the
meridian map $\varphi$, each occurring with multiplicity $1$.
Furthermore, it is prove in \cite{HM} that the element
$Q_{\lambda,\mu}$ is an eigenvector of the meridian map $\varphi$,
with eigenvalue $k_{\lambda,\mu}$. Thus $\{Q_{\lambda,\mu}\}$ forms
a basis of $\mathcal{C}$. Moreover, the basis elements
$Q_{\lambda,\mu}$ of $\mathcal{C}$ have the property that the
product of any two is a non-negative integer linear combination of
basis elements.
\begin{align}
Q_{\lambda,\mu}=\sum_{\sigma,\rho,\nu}(-1)^{|\sigma|}c_{\sigma,\rho}^{\lambda}c_{\sigma^t,\nu}^{\mu}Q_{\rho,\emptyset}
Q_{\emptyset,\nu}.
\end{align}
\subsection{Full colored HOMFLYPT invariants}
Let $\mathcal{L}$ be a framed link with $L$ components with a fixed
numbering. For diagrams $Q_1,..,Q_L$ in the skein model of annulus
with the positive oriented core $\mathcal{C}^+$, we define the
decoration of $\mathcal{L}$ with $Q_1,..,Q_L$ as the link
\begin{align}
\mathcal{L}\star \otimes_{\alpha=1}^{L} Q_\alpha
\end{align}
which derived from $\mathcal{L}$ by replacing every annulus
$\mathcal{L}$ by the annulus with the diagram $Q_\alpha$ such that
the orientations of the cores match. Each $Q_\alpha$ has a small
backboard neighborhood in the annulus which makes the decorated link
$\mathcal{L}\otimes_{\alpha=1}^{L}Q_\alpha$ into a framed link.

In particular, when $Q_{\lambda^\alpha,\mu^\alpha}\in
\mathcal{C}_{d_\alpha,t_\alpha}$, where $\lambda^\alpha, \mu^\alpha$
are the partitions of positive integers $d_\alpha$ and $t_\alpha$
respectively, for $\alpha=1,..,L$.
\begin{definition}
The framed full colored HOMFLYPT invariant $\mathcal{H}(\mathcal{L};
\otimes_{\alpha=1}^LQ_{\lambda^\alpha,\mu^\alpha})$ of $\mathcal{L}$
 is
defined to be the HOMFLYPT polynomial (framing-dependence) of the
decorated link $\mathcal{L}\star\otimes_{\alpha=1}^L
Q_{\lambda^\alpha,\mu^\alpha}$, i.e. $ \mathcal{H}(\mathcal{L};
 \otimes_{\alpha=1}^LQ_{\lambda^\alpha,\mu^\alpha})=
\langle\mathcal{L}\star\otimes_{\alpha=1}^L
Q_{\lambda^\alpha,\mu^\alpha}\rangle$.
\end{definition}
By the result in \cite{GT}, it is easy to show that the framing
factor for $Q_{\lambda,\mu}$ is $q^{\kappa_{\lambda}+\kappa_{\mu}}
t^{|\lambda|+|\mu|}$.

\begin{definition} \label{defframeindep}
The (framing-independence) full colored HOMFLYPT invariant of
$\mathcal{L}$
 is
defined as follow:
\begin{align}
&W_{[\lambda^1,\mu^1],[\lambda^2,\mu^2],..,[\lambda^L,\mu^L]}(\mathcal{L})\\\nonumber
&=q^{-\sum_{\alpha=1}^L(\kappa_{\lambda^\alpha}+\kappa_{\mu^\alpha})w(\mathcal{K}_\alpha)}
t^{-\sum_{\alpha=1}^L(|\lambda^\alpha|+|\mu^\alpha|)w(\mathcal{K}_{\alpha})}
\langle\mathcal{L}\star\otimes_{\alpha=1}^L
Q_{\lambda^\alpha,\mu^\alpha}\rangle.
\end{align}

\end{definition}
In particular, when $\mu^\alpha=\emptyset$, for $\alpha=1,...,L$.
Then
$W_{[\lambda^1,\emptyset],..,[\lambda^L,\emptyset]}(\mathcal{L})$ is
reduced to the original colored HOMFLYPT invariant
$W_{\vec{\lambda}}(\mathcal{L})$ defined in \cite{Zhu}.
\begin{example}
For the unknot $U$, by the formula (3.23), we have
\begin{align} \label{unknotformula}
W_{[\lambda,\nu]}(U)=\langle Q_{\lambda,\mu}
\rangle=\sum_{\sigma,\rho,\nu}(-1)^{|\sigma|}c_{\sigma,\rho}^{\lambda}c_{\sigma^t,\nu}^{\mu}s_{\rho}^\#(q,t)s_{\nu}^{\#}(q,t).
\end{align}
where $s_{\mu}^\#(q,t)$ denotes the colored HOMFLYPT invariant
$W_{\mu}(U)$ of $U$.

Throughout this paper, we use the notation $s_{\lambda,\nu}^\#(q,t)$
to denote the full colored HOMFLYPT invariant of the unknot
$W_{[\lambda,\nu]}(U)$.
\end{example}
\subsection{Symmetric properties}
By the definition the mirror map $\bar{\quad}$ and $*$ map, it is
easy to see
\begin{align}
\overline{Q_{\lambda,\mu}}=Q_{\lambda,\mu}, \
Q_{\lambda,\mu}^*=Q_{\mu,\lambda}.
\end{align}
For a knot $\mathcal{K}$, we have
\begin{align}
\mathcal{H}(\mathcal{K};Q_{\lambda,\mu})=\mathcal{H}(\mathcal{K};Q_{\mu,\lambda}^*)=\mathcal{H}(\mathcal{K};Q_{\mu,\lambda}).
\end{align}
where the last equality is followed by the fact that the HOMFLYPT
polynomial of a knot is independent of its orientation. For a link
$\mathcal{L}$ with $L$-components, we have
\begin{align}
\mathcal{H}(\mathcal{L};\otimes_{\alpha=1}^LQ_{\lambda^\alpha,\mu^\alpha})
=\mathcal{H}(\mathcal{L};\otimes_{\alpha=1}^LQ_{\mu^\alpha,\lambda^\alpha}^*)=\mathcal{H}(\mathcal{L};\otimes_{\alpha=1}^LQ_{\mu^\alpha,\lambda^\alpha}).
\end{align}

Given a partition $\lambda$, let $\lambda^t$ be its conjugate
partition. Then in the skein $\mathcal{C}$ we have
\begin{align}
Q_{\lambda,\mu}|_{q\rightarrow -q^{-1}}=Q_{\lambda^t,\mu^t}.
\end{align}
Therefore, for a link $\mathcal{L}$, we have
\begin{align}
\mathcal{H}(\mathcal{L};\otimes_{\alpha=1}^LQ_{\lambda^\alpha,\mu^\alpha};q,t)
=\mathcal{H}(\mathcal{L};\otimes_{\alpha=1}^LQ_{(\lambda^\alpha)^t,(\mu^\alpha)^t};-q^{-1},t).
\end{align}

\section{Full colored HOMFLYPT invariants for torus links}
Let us consider the $L$-component torus link  $T=T_{mL}^{nL}$ which
is the closure of the framed $mL$-braid $(\beta_{mL})^{nL}$,  where
$(m,n)=1$. The braid $\beta_{m}$ is showed in Figure 3.

\begin{figure}[b]
\begin{center}
\includegraphics[width=160 pt]{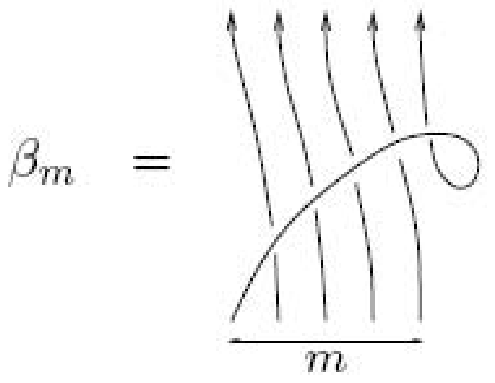}
\caption{}
\end{center}
\end{figure}
\begin{remark}
In some literatures (such as \cite{LZ}), the $L$-component torus
link $T(mL,nL)$ is defined to be the closure of the braid
$(\sigma_1\cdots \sigma_{mL-1})^{nL}$. It is clear that
$T_{mL}^{nL}$ and $T(mL,nL)$ represent the same torus link but with
different framings.
\end{remark}
 $T=T_{mL}^{nL}$ induces a map $F_{mL}^{nL}:
\otimes_{\alpha=1}^{L}\mathcal{C}_{d_\alpha,r_\alpha}\rightarrow
\mathcal{C}_{m(\sum_{\alpha=1}^{L}d_{\alpha}),m(\sum_{\alpha=1}^{L}r_{\alpha})}$
by taking an element $\otimes_{\alpha=1}^L
Q_{\lambda^{\alpha},\mu^\alpha}$ to $T_{mL}^{nL}\star
\otimes_{\alpha=1}^{L} Q_{\lambda^{\alpha},\mu^\alpha}$. We define
$\tau=F^{1}_1$, then $\tau$ is the framing change map. Thus if we
let
\begin{align}
\tau(Q_{\lambda,\mu})=\tau_{\lambda,\mu} Q_{\lambda,\mu}
\end{align}
then
$\tau_{\lambda,\mu}=q^{\kappa_\lambda+\kappa_\mu}t^{|\lambda|+|\mu|}$.
We define the fractional twist map
$\tau^{\frac{n}{m}}:\mathcal{C}\rightarrow \mathcal{C}$ as the
linear map on the basis $Q_{\lambda,\mu}$ given by
\begin{align}
\tau^{\frac{n}{m}}(Q_{\lambda,\mu})=(\tau_{\lambda,\mu})^{\frac{n}{m}}Q_{\lambda,\mu}.
\end{align}

In the following, we give an explicit expression for
$F^{nL}_{mL}(\otimes_{\alpha=1}^{L}Q_{\lambda^{\alpha},\mu^\alpha})$.
 Let $\Lambda_x$ and $\Lambda_{x^*}$ be the rings of
symmetric functions with variables $(x_1,x_2,....)$ and
$(x_1^*,x_2^*,...)$ respectively.  The Schur functions
$s_{\lambda}(x)(\lambda \in \mathcal{P})$ forms a basis of the ring
$\Lambda_x$ \cite{Mac}. It is showed in \cite{Ko} that the
polynomials $s_{\lambda,\mu}(x;x^*) (\lambda,\mu \in \mathcal{P})$
(the notation $[\lambda,\mu]_{GL}$ in \cite{Ko}) forms a
$\mathbb{Z}$ basis of the ring $\Lambda_x\otimes \Lambda_{x^*}$. We
define the $m$-th Adams operator $\Psi_m$ on $\Lambda_x$ and
$\Lambda_x\otimes \Lambda_{x^*}$ as follow:
\begin{align}
\Psi_m(s_{\lambda}(x))=s_{\lambda}(x^m), \
\Psi_m(s_{\lambda,\mu}(x;x^*))=s_{\lambda,\mu}(x^m;x^{*m}).
\end{align}
Since $\mathcal{C}_+$ is isomorphic to the ring $\Lambda_x$.
$\mathcal{C}$ is isomorphic to the ring $\Lambda_x\otimes
\Lambda_{x^*}$. For any $Q\in \mathcal{C}_{d,r}$,  $\Psi_m(Q)$ is
well-defined. Moreover, $\Psi_m(Q)\in \mathcal{C}_{md,mr}$.

We have the following formula which is the generalization of Theorem
13 showed in \cite{MM}
\begin{align}
F_{mL}^{nL}(\otimes_{\alpha=1}^{L}Q_{\lambda^{\alpha},\mu^\alpha})=\tau^{\frac{n}{m}}(\prod_{\alpha=1}^{L}\Psi_m(Q_{\lambda^{\alpha},\mu^\alpha})).
\end{align}
Since $\{Q_{\lambda,\mu}: \lambda,\mu \in \mathcal{P}, |\lambda|=d,
|\mu|=r \}$ forms a basis of $\mathcal{C}_{d,r}$, we have the
expansion
\begin{align}
\prod_{\alpha=1}^{L}\Psi_m(Q_{\lambda^{\alpha},\mu^\alpha})=\sum_{\rho,\nu}
C_{[\lambda^1,\mu^1],..,[\lambda^L,\mu^L];m}^{[\rho,\nu]}Q_{\rho,\nu},
\end{align}
where $C_{[\lambda^1,\mu^1],..,[\lambda^L,\mu^L];m}^{[\rho,\nu]}$
are the coefficients given by the following formula
\begin{align}
\prod_{\alpha=1}^L\Psi_{m}(s_{\lambda^\alpha,\mu^\alpha}(x;x^*))=\sum_{\rho,\nu}C_{[\lambda^1,\mu^1],..,[\lambda^L,\mu^L];m}^{[\rho,\nu]}s_{\rho,\nu}(x;x^{*}).
\end{align}

By the definition of the fractional twist map of
$\tau^{\frac{n}{m}}$, we obtain
\begin{align}
F_{mL}^{nL}(\otimes_{\alpha=1}^{L}Q_{\lambda^{\alpha},\mu^\alpha})=\sum_{\rho,\nu}C_{[\lambda^1,\mu^1],..,[\lambda^L,\mu^L];m}^{[\rho,\nu]}q^{\frac{n}{m}(\kappa_\rho+\kappa_\nu)}
t^{\frac{n}{m}(|\rho|+|\nu|)}Q_{\rho,\nu}.
\end{align}
Therefore, by Definition \ref{defframeindep}, the full colored
HOMFLYPT invariants of the torus link $T_{mL}^{nL}$ is given by
\begin{align} \label{toruslinkformula}
&W_{[\lambda^1,\mu^1],..,[\lambda^{L},\mu^L]}(T_{mL}^{nL})\\
\nonumber &=q^{-m\cdot n\sum_{\alpha=1}^L
(\kappa_{\lambda^{\alpha}}+\kappa_{\mu^\alpha})}t^{-n\cdot
m\sum_{\alpha=1}^L(|\lambda^\alpha|+|\mu^\alpha|)}\langle
F_{mL}^{nL}(\otimes_{\alpha=1}^{L}Q_{\lambda^{\alpha},\mu^\alpha})\rangle
\\\nonumber &=q^{-m\cdot n\sum_{\alpha=1}^L
(\kappa_{\lambda^\alpha}+\kappa_{\mu^\alpha})}t^{-n\cdot
m\sum_{\alpha=1}^L(|\lambda^\alpha|+|\mu^\alpha|)}\\\nonumber
&\sum_{\rho,\nu}C_{[\lambda^1,\mu^1],..,[\lambda^L,\mu^L];m}^{[\rho,\nu]}q^{\frac{n}{m}(\kappa_\rho+\kappa_\nu)}t^{\frac{n}{m}(|\rho|+|\nu|)}
\langle Q_{\rho,\nu} \rangle.
\end{align}
where $\langle Q_{\rho,\nu}\rangle=s^{\#}_{\rho,\nu}(q,t)$
 is the full colored HOMFLYPT invariant of the unknot $U$.

Now, let us give the explicit expression of the coefficient
$C_{[\lambda^1,\mu^1],..,[\lambda^L,\mu^L];m}^{[\rho,\nu]}$.  We
need the following formulas in \cite{Ko}.
\begin{align} \label{formula1}
s_{\xi,\eta}(x;x^*)s_{\rho,\nu}(x;x^*)=M_{[\xi,\eta],[\rho,\nu]}^{[\lambda,\mu]}s_{\lambda,\mu}(x;x^*),
\end{align}
where
\begin{align}
M_{[\xi,\eta],[\rho,\nu]}^{[\lambda,\mu]}=\sum_{\beta,\gamma,\theta,\delta}\left(\sum_{\sigma}c_{\sigma,\beta}^{\xi}c_{\sigma,\gamma}^{\nu}\right)
\left(\sum_{\epsilon}c_{\epsilon,\theta}^{\eta}c_{\epsilon,\delta}^{\rho}\right)c_{\beta,\delta}^{\lambda}c_{\gamma,\theta}^\mu.
\end{align}
\begin{align} \label{formula2}
s_{\lambda,\mu}(x;x^*)=\sum_{\sigma,\rho,\nu}(-1)^{|\sigma|}c_{\sigma,\rho}^{\lambda}c_{\sigma^t,\nu}^{\mu}s_{\rho}(x)s_{\nu}(x^*).
\end{align}
\begin{align} \label{formula3}
s_{\lambda}(x)s_{\mu}(x^*)=\sum_{\epsilon,\rho,\nu}c_{\epsilon,\rho}^\lambda
c_{\epsilon,\nu}^{\mu}s_{\rho,\nu}(x;x^*).
\end{align}

Let $C_{\lambda;m}^{\rho}$ and $C_{[\lambda,\mu];m}^{[\rho,\nu]}$ be
the coefficients determined by the following formulas:
\begin{align}
\Psi_m(s_{\lambda}(x))=\sum_{\rho}C_{\lambda;m}^{\rho}s_{\rho}(x), \
\Psi_m(s_{\lambda,\mu}(x;x^*))=\sum_{\rho,\nu}C_{[\lambda,\mu];m}^{[\rho,\nu]}s_{\rho,\nu}(x;x^*).
\end{align}
By formula (4.11), we have
\begin{align}
\Psi_m(s_{\lambda,\mu}(x;x^*))&=\sum_{\sigma,\rho,\nu}(-1)^{|\sigma|}c_{\sigma,\rho}^{\lambda}c_{\sigma^t,\nu}^{\mu}\Psi_m(s_{\rho}(x))
\Psi_m(s_{\nu}(x^*))\\\nonumber
&=\sum_{\sigma,\rho,\nu}(-1)^{|\sigma|}c_{\sigma,\rho}^{\lambda}c_{\sigma^t,\nu}^{\mu}\sum_{\delta,\theta}
C_{\rho;m}^{\delta}C_{\nu;m}^{\theta}s_{\delta}(x)s_{\theta}(x^*)\\\nonumber
&=\sum_{\sigma,\rho,\nu}(-1)^{|\sigma|}c_{\sigma,\rho}^{\lambda}c_{\sigma^t,\nu}^{\mu}\sum_{\delta,\theta}
C_{\rho;m}^{\delta}C_{\nu;m}^{\theta}\sum_{\epsilon,\beta,\gamma}c_{\epsilon,\beta}^{\delta}c_{\epsilon,\gamma}^{\theta}s_{\beta,\gamma}(x;x^*)
\end{align}
Hence, we obtain
\begin{align}
C_{[\lambda,\mu];m}^{[\beta,\gamma]}=\sum_{\sigma,\rho,\nu,\delta,\theta,\epsilon}(-1)^{|\sigma|}c_{\sigma,\rho}^{\lambda}c_{\sigma^t,\nu}^{\mu}
C_{\rho;m}^{\delta}C_{\nu;m}^{\theta}c_{\epsilon,\beta}^{\delta}c_{\epsilon,\gamma}^{\theta}.
\end{align}
Therefore, as to the torus knot $T_{m}^{n}$, the full HOMFLYPT
invariant is given by
\begin{align} \label{torusknot}
&W_{[\lambda,\mu]}(T_{m}^{n};q,t)\\
\nonumber &=q^{-m\cdot n (\kappa_{\lambda}+\kappa_{\mu})}t^{-n\cdot
m(|\lambda|+|\mu|)}\sum_{\rho,\nu}C_{[\lambda,\mu];m}^{[\rho,\nu]}q^{\frac{n}{m}(\kappa_{\rho}+\kappa_{\nu})}t^{\frac{n}{m}(|\rho|+|\nu|)}s^{\#}_{\rho,\nu}(q,t).
\end{align}

Finally, by formula (\ref{formula1}), we have
\begin{align}
&\prod_{\alpha=1}^L\Psi_m(s_{\lambda^\alpha,\mu^\alpha}(x;x^*))\\\nonumber
&=\sum_{\beta^1,\gamma^1}
M_{[\lambda^1,\mu^1],[\lambda^2,\mu^2]}^{[\beta^1,\gamma^1]}\Psi_{m}(s_{\beta^1,\gamma^1}(x;x^*))
\prod_{\alpha=3}^L\Psi_m(s_{\lambda^\alpha,\mu^\alpha}(x;x^*))\\\nonumber
&=\cdots\\\nonumber
&=\sum_{\beta^\alpha,\gamma^\alpha}M_{[\lambda^1,\mu^1],[\lambda^{2},\mu^{2}]}^{[\beta^{1},\gamma^{1}]}
M_{[\beta^1,\gamma^1],[\lambda^{3},\mu^{3}]}^{[\beta^{2},\gamma^{2}]}\cdots
M_{[\beta^{L-2},\gamma^{L-2}],[\lambda^{L},\mu^{L}]}^{[\beta^{L-1},\gamma^{L-1}]}
\Psi_{m}(s_{\beta^{L-1},\gamma^{L-1}}(x;x^*))\\\nonumber
&=\sum_{\beta^\alpha,\gamma^\alpha}M_{[\lambda^1,\mu^1],[\lambda^{2},\mu^{2}],...,[\lambda^L,\mu^L]}^{[\beta^{L-1},\gamma^{L-1}]}
C_{[\beta^{L-1},\gamma^{L-1}];m}^{[\rho,\nu]}s_{\rho,\nu}(x;x^*).
\end{align}
where we have let
\begin{align}
M_{[\lambda^1,\mu^1],[\lambda^{2},\mu^{2}],...,[\lambda^L,\mu^L]}^{[\beta^{L-1},\gamma^{L-1}]}=\sum_{\beta^\alpha,\gamma^\alpha,\alpha=1,..,L-2}M_{[\lambda^1,\mu^1],[\lambda^{2},\mu^{2}]}^{[\beta^{1},\gamma^{1}]}
M_{[\beta^1,\gamma^1],[\lambda^{3},\mu^{3}]}^{[\beta^{2},\gamma^{2}]}\cdots
M_{[\beta^{L-2},\gamma^{L-2}],[\lambda^{L},\mu^{L}]}^{[\beta^{L-1},\gamma^{L-1}]}
\end{align}
Thus, we obtain the following formula
\begin{align}
C_{[\lambda^1,\mu^1],..,[\lambda^L,\mu^L];m}^{[\rho,\nu]}=\sum_{\beta,\gamma}
M_{[\lambda^1,\mu^1],[\lambda^{2},\mu^{2}],...,[\lambda^L,\mu^L]}^{[\beta,\gamma]}
C_{[\beta,\gamma];m}^{[\rho,\nu]}.
\end{align}
Combing the formula (\ref{torusknot}), we get the expression for the
full HOMFLYPT invariant for torus link $T_{mL}^{nL}$:
\begin{align}
W_{[\lambda^1,\mu^1],..,[\lambda^{L},\mu^L]}(T_{mL}^{nL})&=q^{-nm
\sum_{\alpha=1}^L
(\kappa_{\lambda^{\alpha}}+\kappa_{\mu^\alpha})}t^{-n
m\sum_{\alpha=1}^L(|\lambda^\alpha|+|\mu^\alpha|)}\\
\nonumber
&\sum_{\beta,\gamma}M_{[\lambda^1,\mu^1],...,[\lambda^L,\mu^L]}^{[\beta,\gamma]}
q^{nm(\kappa_{\beta}+\kappa_{\gamma})}t^{nm(|\beta|+|\gamma|)}W_{[\beta,\gamma]}(T_{m}^{n};q,t).
\end{align}

Since $W_{[\lambda^1,\mu^1],..,[\lambda^{L},\mu^L]}(\mathcal{L})$ is
a framing-independent invariant, we also have
\begin{align}
W_{[\lambda^1,\mu^1],..,[\lambda^{L},\mu^L]}(T(mL,nL))=W_{[\lambda^1,\mu^1],..,[\lambda^{L},\mu^L]}(T_{mL}^{nL}).
\end{align}

\begin{example}
As to the torus knot $T(2,2k+1)$, we have
\begin{align}
W_{[(1),(1)]}(T(2,2k+1))
&=t^{-8k-4}\left(1+q^{-4k-2}t^{4k+2}s_{(1^2),
(1^2)}^\#\right.\\\nonumber &\left.-t^{4k+2}s_{(1^2),
(2)}^\#-t^{4k+2}s_{(2),(1^2)}^\#+q^{4k+2}t^{4k+2}s_{(2),(2)}^\#\right).
\end{align}

\begin{align}
W_{[(2),(1)]}(T(2,2k+1))&=q^{-8k-4}t^{-12k-6}\left(-q^{-2k-1}t^{2k+1}s_{(1^2),\emptyset}^\#+q^{2k+1}t^{2k+1}s_{(2),
\emptyset}^\#\right.\\\nonumber&\left.-q^{-2k-1}t^{6k+3}s_{(2^2),(1^2)}^\#+q^{2k+1}t^{6k+3}s_{(2^2),
(2)}^\#+q^{2k+1}t^{6k+3}s_{(31),
(1^2)}^\#\right.\\\nonumber&\left.-q^{6k+3}t^{6k+3}s_{(31),
(2)}^\#-q^{10k+5}t^{6k+3}s_{(4), (1^2)}^\#+q^{14k+7}t^{6k+3}s_{(4),
(2)}^\#\right).
\end{align}

\begin{align}
W_{[(1^2),(1)]}(T(2,2k+1))
&=q^{8k+4}t^{-12k-6}\left(-q^{-2k-1}t^{2k+1}s_{(1^2),
\emptyset}^\#+q^{2k+1}t^{2k+1}s_{(2),
\emptyset}^\#\right.\\\nonumber&\left.-q^{-14k-7}t^{6k+3}s_{(1^4),
(1^2)}^\#+q^{-10k-5}t^{6k+3}s_{(1^4),
(2)}^\#+q^{-6k-3}t^{6k+3}s_{(21^2),
(1^2)}^\#\right.\\\nonumber&\left.-q^{-2k-1}t^{6k+3}s_{(21^2),
(2)}^\#-q^{-2k-1}t^{6k+3}s_{(2^2),
(1^2)}^\#+q^{2k+1}t^{6k+3}s_{(2^2), (2)}^\#\right).
\end{align}

\begin{align}
W_{[(1^2),(1^2)]}(T(2,2k+1))
&=q^{16k+8}t^{-16k-8}\left(1+q^{-4k-2}t^{4k+2}s_{(1^2),
(1^2)}^\#-t^{4k+2}s_{(1^2), (2)}^\#\right.\\\nonumber
&\left.-t^{4k+2}s_{(2), (1^2)}^\#+q^{4k+2}t^{4k+2}s_{(2),
(2)}^\#+q^{-24k-12}t^{8k+4}s_{(1^4), (1^4)}^\#\right.\\\nonumber
&\left.-q^{-16k-8}t^{8k+4}s_{(1^4),
(21^2)}^\#+q^{-12k-6}t^{8k+4}s_{(1^4),(2^2)}^\#\right.\\\nonumber
&\left.-q^{-16k-8}t^{8k+4}s_{(21^2),
(1^4)}^\#+q^{-8k-4}t^{8k+4}s_{(21^2),
(21^2)}^\#-q^{-4k-2}t^{8k+4}s_{(21^2), (2^2)}^\#\right.\\\nonumber
&\left.+q^{-12k-6}t^{8k+4}s_{(2^2),(1^4)}^\#-q^{-4k-2}t^{8k+4}s_{(2^2),
(21^2)}^\#+t^{8k+4}s_{(2^2), (2^2)}^\#\right).
\end{align}

\begin{align}
W_{[(1^2),(2)]}(T(2,2k+1)) &=
t^{-16k-8}\left(q^{-4k-2}t^{4k+2}s_{(1^2),
(1^2)}^\#-t^{4k+2}s_{(1^2), (2)}^\#-t^{4k+2}s_{(2),
(1^2)}^\#\right.\\\nonumber &\left.+q^{4k+2}t^{4k+2}s_{(2),
(2)}^\#+q^{-12k-6}t^{8k+4}s_{(1^4),
(2^2)}^\#-q^{-8k-4}t^{8k+4}s_{(1^4), (31)}^\#\right.\\\nonumber
&\left.+t^{8k+4}s_{(1^4),
(4)}^\#-q^{-4k-2}t^{8k+4}s_{(21^2),(2^2)}^\#+t^{8k+4}s_{(21^2),(31)}^\#-q^{8k+4}t^{8k+4}s_{(21^2),
(4)}^\#\right.\\\nonumber &\left.+t^{8k+4}s_{(2^2),
(2^2)}^\#-q^{4k+2}t^{8k+4}s_{(2^2),(31)}^\#+q^{12k+6}t^{8k+4}s_{(2^2),
(4)}^\#\right).
\end{align}

\begin{align}
W_{[(2),(2)]}(T(2,2k+1)) &=
q^{-16k-8}t^{-16k-8}\left(1+q^{-4k-2}t^{4k+2}s_{(1^2),
(1^2)}^\#-t^{4k+2}s_{(1^2), (2)}^\#\right.\\\nonumber
&\left.-t^{4k+2}s_{(2),(1^2)}^\#+q^{4k+2}t^{4k+2}s_{(2),
(2)}^\#+t^{8k+4}s_{(2^2), (2^2)}^\#\right.\\\nonumber
&\left.-q^{4k+2}t^{8k+4}s_{(2^2),
(31)}^\#+q^{12k+6}t^{8k+4}s_{(2^2), (4)}^\#-q^{4k+2}t^{8k+4}s_{(31),
(2^2)}^\#\right.\\\nonumber &\left.+q^{8k+4}t^{8k+4}s_{(31),
(31)}^\#-q^{16k+8}t^{8k+4}s_{(31),
(4)}^\#+q^{12k+6}t^{8k+4}s_{(4),(2^2)}^\#\right.\\\nonumber
&\left.-q^{16k+8}t^{8k+4}s_{(4), (31)}^\#+q^{24k+12}t^{8k+4}s_{(4),
(4)}^\#\right).
\end{align}

\end{example}

\section{Special polynomials}
For a knot $\mathcal{K}$ and a partition $\lambda\in \mathcal{P}$,
P. Dunin-Barkowski, A. Mironov, A. Morozov, A. Sleptsov and A.
Smirnov \cite{BMMSS} defined the following special polynomial
\begin{align}
H_{\lambda}^\mathcal{K}(t)=\lim_{q\rightarrow
1}\frac{W_{\lambda}(\mathcal{K};q,t)}{W_{\lambda}(U;q,t)}.
\end{align}
In particular, when $\lambda=(1)$, we have
\begin{align}
H_{(1)}^\mathcal{K}(t)=\lim_{q\rightarrow
1}\frac{W_{(1)}(\mathcal{K};q,t)}{W_{(1)}(U;q,t)}=P_{\mathcal{K}}(1,t)
\end{align}
where $P_{\mathcal{K}}(q,t)$ is the HOMFLYPT polynomial as defined
in (2.1).

After testing many examples \cite{BMMSS,IMMM1,IMMM2}, they proposed
the following conjectural formula:
\begin{align} \label{conjformula}
H_{\lambda}^{\mathcal{K}}(t)=H_{(1)}^{\mathcal{K}}(t)^{|\lambda|}.
\end{align}
A rigid mathematical proof of the formula (\ref{conjformula}) is
given in \cite{LP} and \cite{Zhu} with different methods. In fact,
they have proved that the formula (\ref{conjformula}) holds for any
link $\mathcal{L}$.  The special polynomial for a link $\mathcal{L}$
with $L$ components is defined as follow:
\begin{align}
H_{\vec{\lambda}}^\mathcal{L}(t)=\lim_{q\rightarrow
1}\frac{W_{\vec{\lambda}}(\mathcal{L};q,t)}{W_{\vec{\lambda}}(U^{\otimes
L};q,t)}.
\end{align}
\begin{theorem}[\cite{LP} and \cite{Zhu}]
Given $\vec{\lambda}=(\lambda^1,..,\lambda^{L})\in \mathcal{P}^L$
and a link $\mathcal{L}$ with $L$ components $\mathcal{K}_\alpha,
\alpha=1,..,L$, then we have
\begin{align}
H_{\vec{\lambda}}^{\mathcal{L}}(t)=\prod_{\alpha=1}^{L}H_{(1)}^{\mathcal{K}_\alpha}(a)^{|\lambda^\alpha|}.
\end{align}
\end{theorem}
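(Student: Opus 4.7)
The plan is to reduce from the Schur basis $\{Q_{\lambda^\alpha}\}$ to the power-sum basis $\{P_{\mu^\alpha}\}$ via the Frobenius formula, take the $q\to 1$ limit there, and recombine. The motivation is that in the power-sum basis the quantum skein relation $qL_+ - q^{-1}L_- = (q-q^{-1})L_0$ degenerates at $q=1$ to $L_+ = L_-$, which permits a clean multiplicative factorization across the components of $\mathcal{L}$, whereas the Schur-basis expression is opaque in this limit. Concretely, using
\begin{equation*}
Q_{\lambda^\alpha} \;=\; \sum_{\mu^\alpha\vdash|\lambda^\alpha|} \frac{\chi_{\lambda^\alpha}(C_{\mu^\alpha})}{z_{\mu^\alpha}}\, P_{\mu^\alpha},
\end{equation*}
I would expand both $W_{\vec{\lambda}}(\mathcal{L};q,t)$ and $\prod_\alpha W_{\lambda^\alpha}(U;q,t)$ as sums over $\vec{\mu}=(\mu^1,\ldots,\mu^L)$. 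Since $\chi_{\lambda^\alpha}(C_{\mu^\alpha})$ vanishes unless $|\mu^\alpha|=|\lambda^\alpha|$, the framing factor $t^{-|\lambda^\alpha|w(\mathcal{K}_\alpha)}$ may be uniformly replaced by $t^{-|\mu^\alpha|w(\mathcal{K}_\alpha)}$, and $q^{-\kappa_{\lambda^\alpha}w(\mathcal{K}_\alpha)}\to 1$ as $q\to 1$, so neither kind of framing factor obstructs the limit.

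The technical heart is to establish the multiplicative factorization
\begin{equation*}
\lim_{q\to 1}\frac{t^{-\sum_\alpha|\mu^\alpha|w(\mathcal{K}_\alpha)}\,\langle\mathcal{L}\star\otimes_\alpha P_{\mu^\alpha}\rangle}{\prod_\alpha \langle U\star P_{\mu^\alpha}\rangle} \;=\; \prod_\alpha P_{\mathcal{K}_\alpha}(1,t)^{|\mu^\alpha|}.
\end{equation*}
To prove this, use $\{m\}P_m = X_m = \sum_{j=0}^{m-1} A_{m-1-j,j}$ to rewrite each satellite $\mathcal{L}\star\otimes_\alpha P_{\mu^\alpha}$ as a linear combination of satellites whose $\alpha$-th decoration is a collection of single closed curves in the annulus, one curve per part of $\mu^\alpha$. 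At $q=1$ the relation $L_+ = L_-$ makes crossings between different cabled components, and between different strands of a single cabled component, harmless, so the cabled link may be separated into its components and the computation reduced to the single-knot case. Tracking the writhe contributions and the divergence $\langle U\rangle\sim(q-q^{-1})^{-1}$ of the unknot normalization then produces exactly the exponent $|\mu^\alpha|$ on each $P_{\mathcal{K}_\alpha}(1,t)$.

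With this lemma in hand, the recombination is formal: since $|\mu^\alpha|=|\lambda^\alpha|$ throughout the support of the character sum, the factor $\prod_\alpha P_{\mathcal{K}_\alpha}(1,t)^{|\lambda^\alpha|}$ pulls out of the $\vec{\mu}$-sum, and the remaining sum $\sum_{\vec{\mu}}\prod_\alpha\frac{\chi_{\lambda^\alpha}(C_{\mu^\alpha})}{z_{\mu^\alpha}}\langle U\star P_{\mu^\alpha}\rangle$ is exactly $\prod_\alpha W_{\lambda^\alpha}(U;q,t)$ by Frobenius applied to the unknot, so the ratio collapses to $1$. The main obstacle is the factorization lemma: one must track the cancellation of the divergent unknot factors between numerator and denominator and rigorously extract the exponent $|\mu^\alpha|$ from the $q=1$ degeneration of the skein relation, for which the hook-content formula for $\langle Q_\lambda\rangle_U$ is a convenient tool to calibrate asymptotics. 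Once this cabling-asymptotics lemma is in place, everything else is character-theoretic bookkeeping.
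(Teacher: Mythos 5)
Your overall strategy (expand $Q_{\lambda^\alpha}$ into power sums via Frobenius, take $q\to 1$, recombine over the character sum) is sound, and the recombination bookkeeping at the end is fine, but the step you yourself identify as the technical heart --- the factorization lemma for an \emph{arbitrary} decoration $\otimes_\alpha P_{\mu^\alpha}$ --- is where the proposal has a genuine gap. Your justification is that at $q=1$ the skein relation degenerates to $L_+=L_-$, so crossings ``between different strands of a single cabled component'' are harmless. This fails for the connected cables that arise whenever some part $\mu^\alpha_i\geq 2$: writing $\{m\}P_m=\sum_j A_{m-1-j,j}$, each $A_{m-1-j,j}$ is a \emph{single} closed curve winding $m$ times, so $\mathcal{K}_\alpha\star A_{m-1-j,j}$ is one knot component and the crossings between its different passes are self-crossings. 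For a self-crossing the smoothing $L_0$ has one \emph{more} component than $L_\pm$, so by the Lickorish--Millett lowest-degree theorem $\langle L_0\rangle$ diverges one order faster in $(q-q^{-1})^{-1}$, and $z\langle L_0\rangle$ is of the \emph{same} order as $\langle L_\pm\rangle$ as $q\to 1$; the correction term does not vanish and such crossings cannot be switched for free. Consequently the input you actually need, namely $P_{\mathcal{K}'}(1,t)=t^{c}\,P_{\mathcal{K}}(1,t)^{m}$ for a connected winding-$m$ satellite $\mathcal{K}'$ of $\mathcal{K}$, is a nontrivial assertion (essentially the $\lambda=(m)$ case of the theorem you are proving) and is nowhere established in your argument.

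The paper's proof (given for the generalization, Theorem 5.2, and specializing to this statement when all $\mu^\alpha=\emptyset$) sidesteps this entirely: among all $\vec{\mu}$ in the Frobenius expansion, only $\vec{\mu}=((1^{n_1}),\dots,(1^{n_L}))$ with $n_\alpha=|\lambda^\alpha|$ yields a satellite with the maximal number $\sum_\alpha|\lambda^\alpha|$ of components, hence it is the unique term attaining the maximal divergence $(q-q^{-1})^{-\sum_\alpha|\lambda^\alpha|}$; every other term --- in particular every problematic connected cable --- is subleading and drops out of the ratio in the limit. For the surviving term each cable strand is an honest parallel copy of $\mathcal{K}_\alpha$, so the explicit leading-coefficient formula $\hat{p}^{\mathcal{L}'}_{1-N}(t)=t^{\bar{w}(\mathcal{L}')}(t-t^{-1})^{N}\prod_\beta p_0^{\mathcal{K}'_\beta}(t)$ applies verbatim and yields the exponent $|\lambda^\alpha|$ with no cabling lemma required. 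If you wish to retain your stronger lemma for all $\vec{\mu}$, you must prove the connected-cable identity separately, carefully tracking the non-vanishing $z\langle L_0\rangle$ corrections at self-crossings; as written, that step does not go through.
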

We can also define the special polynomial for full colored HOMFLYPT
invariant for a link $\mathcal{L}$ with $L$ components similarly:
\begin{align}
H_{[\lambda^1,\mu^1],..,[\lambda^L,\mu^L]}^\mathcal{L}(t)=\lim_{q\rightarrow
1}\frac{W_{[\lambda^1,\mu^1],..,[\lambda^L,\mu^L]}(\mathcal{L};q,t)}{\prod_{\alpha=1}^LW_{[\lambda^\alpha,\mu^\alpha]}(U;q,t)}.
\end{align}

\begin{theorem} \label{Thmlimit}
For a link $\mathcal{L}$ with $L$ components $\mathcal{K}_\alpha,
\alpha=1,..,L$, we have
\begin{align}
H_{[\lambda^1,\mu^1],..,[\lambda^L,\mu^L]}^\mathcal{L}(t)=\prod_{\alpha=1}^{L}P_{\mathcal{K}_\alpha}(1,t)^{|\lambda^\alpha|+|\mu^\alpha|}.
\end{align}
\end{theorem}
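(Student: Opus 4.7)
My plan is to reduce the theorem above to Theorem 5.1 (proved in \cite{LP, Zhu}) by decomposing each basis element $Q_{\lambda^\alpha,\mu^\alpha}$ through the identity (3.23) into a signed integer combination of products $Q_{\rho^\alpha} Q_{\emptyset,\nu^\alpha}$, and giving a geometric reinterpretation of these products. Namely, the product $Q_{\rho^\alpha} Q_{\emptyset,\nu^\alpha}$ in the skein of the annulus corresponds to two concentric annular decorations with opposite core orientations, so satelliting $\mathcal{L}$ by such a product on each component is isotopic, up to framing factors, to satelliting the $2L$-component doubled link $\mathcal{L}^{(2)}$---obtained by replacing each $\mathcal{K}_\alpha$ with a parallel pair $(\mathcal{K}_\alpha,\bar{\mathcal{K}}_\alpha)$ of opposite orientations---with $Q_{\rho^\alpha}$ decorating $\mathcal{K}_\alpha$ and $Q_{\nu^\alpha}$ decorating $\bar{\mathcal{K}}_\alpha$.

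Concretely, I would: (i) apply (3.23) componentwise to expand both $W_{[\lambda^1,\mu^1],..,[\lambda^L,\mu^L]}(\mathcal{L};q,t)$ and the denominator $\prod_\alpha s_{\lambda^\alpha,\mu^\alpha}^{\#}(q,t)$ as signed sums over Littlewood-Richardson coefficients; (ii) invoke Theorem 5.1 applied to $\mathcal{L}^{(2)}$ colored by $(\vec{\rho},\vec{\nu})$, using the orientation-independence $P_{\bar{\mathcal{K}}_\alpha}(1,t)=P_{\mathcal{K}_\alpha}(1,t)$ of the HOMFLYPT polynomial of a knot, to obtain
\[
\lim_{q\to 1}\frac{W_{(\vec{\rho},\vec{\nu})}(\mathcal{L}^{(2)};q,t)}{\prod_\alpha s_{\rho^\alpha}^{\#}(q,t)\,s_{\nu^\alpha}^{\#}(q,t)}=\prod_\alpha P_{\mathcal{K}_\alpha}(1,t)^{|\rho^\alpha|+|\nu^\alpha|};
\]
(iii) track the writhe and linking-number contributions to the $q$ and $t$ framing exponents in passing between the normalizations of $\mathcal{L}$ and $\mathcal{L}^{(2)}$, guided by Remark 2.1 and the relation $w(\mathcal{L})=\bar{w}(\mathcal{L})+2lk(\mathcal{L})$; and (iv) collect all contributions with the Littlewood-Richardson weights from (3.23) to match numerator and denominator.

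The main obstacle is that $|\rho^\alpha|+|\nu^\alpha|=|\lambda^\alpha|+|\mu^\alpha|-2|\sigma^\alpha|$ depends on the summation index $|\sigma^\alpha|$, so termwise factorization does not directly yield the uniform exponent $|\lambda^\alpha|+|\mu^\alpha|$. To resolve this I would pass to the power-sum basis $\{P_\rho P_\eta^{*}\}$ of $\mathcal{C}$ via the Frobenius expansion of $s_{\lambda,\mu}(x;x^{*})$, where satellites by $P_m$ and $P_m^{*}$ on $\mathcal{K}_\alpha$ correspond to products of parallel cables whose special polynomial evaluates uniformly to $P_{\mathcal{K}_\alpha}(1,t)^{m}$ in the $q\to 1$ limit. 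This uniform behavior in $m$ allows the factor $\prod_\alpha P_{\mathcal{K}_\alpha}(1,t)^{|\lambda^\alpha|+|\mu^\alpha|}$ to be pulled out of every term, and the residual Schur-to-power-sum sum reassembles into the denominator $\prod_\alpha s_{\lambda^\alpha,\mu^\alpha}^{\#}(q,t)$ in the limit, producing the claimed product formula.
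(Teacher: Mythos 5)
Your strategy---expand $Q_{\lambda,\mu}$ in a basis where the $q\to1$ asymptotics of the satellite brackets are controllable, and read the starred factors as decorations of reversed parallel copies---is sound, and your power-sum fix lands essentially on the paper's computation, but it is organized less economically and leaves two steps unjustified. The paper writes $Q_{\lambda,\mu}=\frac{\chi_{\lambda}(C_{(1^n)})\chi_{\mu}(C_{(1^m)})}{z_{(1^n)}z_{(1^m)}}P_{(1^n)}P_{(1^m)}^{*}+\sum_s LT_s$ and keeps only the first term: it is the unique one whose satellite has the maximal number $n+m$ of components, so by the Lickorish--Millett bound (the lowest $(q-q^{-1})$-degree of the bracket of an $\ell$-component link is $-\ell$) every $LT_s$ is subleading and drops out of numerator and denominator alike, while the nonvanishing of $\chi_{\lambda}(C_{(1^n)})\chi_{\mu}(C_{(1^m)})$ rules out leading-order cancellation. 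Your ``pull the uniform factor out of every term and reassemble'' step tacitly needs this same fact---the power-sum terms have different orders in $q-q^{-1}$, so termwise limits only combine into a limit of the ratio if the dominant term survives with nonzero coefficient---and you should make it explicit. Moreover, your uniformity input for $m>1$, namely that the $m$-fold cable $\mathcal{K}\star P_m$ contributes the factor $P_{\mathcal{K}}(1,t)^{m}$, follows neither from Theorem 5.1 as stated (which colors by $Q_\lambda$'s) nor from the Lickorish--Millett coefficient formula (which sees only the components, not their winding); it requires a separate $q=1$ multiplicativity argument for satellites. The paper needs neither your doubled link $\mathcal{L}^{(2)}$ nor Theorem 5.1: the surviving term consists only of disjoint parallel and reversed-parallel pushoffs of $\mathcal{K}$, to which the Lickorish--Millett factorization applies verbatim, with $\bar{w}(\mathcal{K}\star P_{(1^n)}P_{(1^m)}^{*})=(n+m)w(\mathcal{K})$ cancelling the framing prefactors. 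If you restrict your expansion to isolating that single term, your argument becomes the paper's.
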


In order to prove the Theorem \ref{Thmlimit}, we need introduce a
classical result due to Lichorish and Millet \cite{LM} which showed
that for a given link $\mathcal{L}$ with $L$ components, the lowest
power of $q-q^{-1}$ in the HOMFLYPT polynomial
$P_{\mathcal{L}}(q,t)$ is $1-L$.
\begin{theorem}[Lickorish-Millett \cite{LM}] Let $\mathcal{L}$ be a link with
$L$ components. The HOMFLYPT polynomial has the expansion
\begin{align}
P_\mathcal{L}(q,t)=\sum_{g\geq
0}p^\mathcal{L}_{2g+1-L}(t)(q-q^{-1})^{2g+1-L}
\end{align}
 which satisfies
\begin{align}
p_{1-L}^{\mathcal{L}}(t)=t^{-2lk(\mathcal{L})}(t-t^{-1})^{L-1}\prod_{\alpha=1}^{L}p_0^{\mathcal{K}_\alpha}(t)
\end{align}
where $p_0^{\mathcal{K}_\alpha}(t)$ is the HOMFLYPT polynomial of
the $\alpha$-th component of the link $\mathcal{L}$ with $q=1$, i.e.
$p_0^{\mathcal{K}_\alpha}(t)=P_{\mathcal{K}_\alpha}(1,t)$.
\end{theorem}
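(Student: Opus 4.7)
The plan is to establish both the expansion and the leading-coefficient formula by induction on the crossing number of a diagram of $\mathcal{L}$, using only the HOMFLYPT skein relation
\[
t^{-1}P(\mathcal{L}_+)-t\,P(\mathcal{L}_-)=z\,P(\mathcal{L}_0),\qquad z=q-q^{-1},
\]
together with the split rule $P_{\mathcal{L}_1\sqcup\mathcal{L}_2}=\frac{t-t^{-1}}{z}\,P_{\mathcal{L}_1}P_{\mathcal{L}_2}$ and the normalization $P_U=1$.

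First I would prove the \emph{structural} statement: the auxiliary expression $\tilde{P}_{\mathcal{L}}(z,t):=z^{L-1}P_{\mathcal{L}}(q,t)$ lies in $\mathbb{Z}[z^{2},t^{\pm 1}]$, which immediately yields the expansion of $P_\mathcal{L}$ in odd powers $z^{2g+1-L}$ for $g\geq 0$. The induction is on the pair (crossing number of a chosen diagram, number of components), ordered lexicographically. The base case is a split union of unknots, where $P_\mathcal{L}=\bigl((t-t^{-1})/z\bigr)^{L-1}$, so $\tilde{P}_\mathcal{L}=(t-t^{-1})^{L-1}$ is manifestly in $\mathbb{Z}[z^{2},t^{\pm 1}]$. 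For the inductive step, multiply the skein relation by $z^{L-1}$: at an inter-component crossing (where $\mathcal{L}_0$ has $L-1$ components) one obtains $t^{-1}\tilde{P}_{\mathcal{L}_+}-t\,\tilde{P}_{\mathcal{L}_-}=z^{2}\tilde{P}_{\mathcal{L}_0}$, while at a self-crossing (where $\mathcal{L}_0$ has $L+1$ components) one obtains $t^{-1}\tilde{P}_{\mathcal{L}_+}-t\,\tilde{P}_{\mathcal{L}_-}=\tilde{P}_{\mathcal{L}_0}$. Both right-hand sides lie in $\mathbb{Z}[z^{2},t^{\pm 1}]$ by the induction hypothesis, and reduction by crossing changes to a split union of unknots propagates the same conclusion to $\tilde{P}_{\mathcal{L}_\pm}$.

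Second, I would extract the leading coefficient $p_{1-L}^{\mathcal{L}}(t)=\tilde{P}_{\mathcal{L}}(0,t)$ by substituting $z=0$ in the reduced recurrences above. At an inter-component crossing the right-hand side $z^{2}\tilde{P}_{\mathcal{L}_0}$ vanishes at $z=0$, producing the pure recurrence $p_{1-L}^{\mathcal{L}_+}(t)=t^{2}p_{1-L}^{\mathcal{L}_-}(t)$; this matches the proposed formula because such a crossing switch changes $lk(\mathcal{L})$ by $\pm 1$ while preserving the knot type of every component, thus rescaling $t^{-2lk(\mathcal{L})}\prod_\alpha p_0^{\mathcal{K}_\alpha}(t)$ by precisely $t^{\pm 2}$. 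At a self-crossing the recurrence is $t^{-1}p_{1-L}^{\mathcal{L}_+}(t)-t\,p_{1-L}^{\mathcal{L}_-}(t)=p_{-L}^{\mathcal{L}_0}(t)$, where $\mathcal{L}_0$ has $L+1$ components; by the inductive hypothesis $p_{-L}^{\mathcal{L}_0}$ is given by the same ansatz applied to $\mathcal{L}_0$, and direct comparison with the single-knot skein relation for the affected component (the classical $L=1$ case of Lickorish--Millett, which is proved by the same induction) shows that the recurrence is satisfied term-by-term after absorbing the extra split factor $(t-t^{-1})$. Together with the split-unknots base case, these two families of recurrences pin down $p_{1-L}^{\mathcal{L}}(t)$ uniquely.

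The main obstacle is to verify that the two distinct flavors of recurrence --- inter-component crossings and self-crossings --- conspire with the split rule and the single-knot case to reproduce the product formula exactly. This amounts to careful bookkeeping of how $lk(\mathcal{L})$, the factor $(t-t^{-1})^{L-1}$, and the individual factors $p_0^{\mathcal{K}_\alpha}(t)$ transform under each type of skein move, and then checking that the proposed ansatz satisfies both recurrences. A secondary, routine point is the choice of a monovariant (the number of crossing changes needed to split $\mathcal{L}$ into unknots) guaranteeing termination of the induction.
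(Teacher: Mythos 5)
This is a classical result that the paper simply imports from Lickorish--Millett \cite{LM} without proof, so there is no internal argument to compare against; your proposal has to stand on its own, and in substance it does: the double induction on (crossing number, number of crossing switches to reach a descending/split-unknot diagram), the observation that $z^{L-1}P_{\mathcal{L}}$ lies in $\mathbb{Z}[z^{2},t^{\pm1}]$, and the extraction of the $z=0$ recurrences for the leading coefficient constitute exactly the original Lickorish--Millett argument. Two points need tightening. First, your skein convention is the mirror of the paper's: from $\langle U\rangle=(t-t^{-1})/(q-q^{-1})$ and the framing factor $t^{|\lambda|+|\mu|}$ one deduces $t\,P_{\mathcal{L}_+}-t^{-1}P_{\mathcal{L}_-}=z\,P_{\mathcal{L}_0}$, not $t^{-1}P_{\mathcal{L}_+}-t\,P_{\mathcal{L}_-}=z\,P_{\mathcal{L}_0}$. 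With your version the inter-component recurrence at $z=0$ reads $p^{\mathcal{L}_+}_{1-L}=t^{2}p^{\mathcal{L}_-}_{1-L}$, which is consistent with $t^{+2lk(\mathcal{L})}$ rather than the stated $t^{-2lk(\mathcal{L})}$, since $lk(\mathcal{L}_+)=lk(\mathcal{L}_-)+1$; writing ``$t^{\pm2}$'' hides this sign, and you must fix the convention once and verify it. Second, the self-crossing step is where the product formula is actually forced and should be spelled out: if smoothing splits $\mathcal{K}_\beta$ into $\mathcal{K}'_\beta\sqcup\mathcal{K}''_\beta$, then $lk(\mathcal{L}_0)=lk(\mathcal{L}_\pm)+lk(\mathcal{K}'_\beta,\mathcal{K}''_\beta)$, and the leading-order recurrence for $p^{\mathcal{L}_0}_{-L}$ is precisely the one-component relation for $\mathcal{K}_\beta$ multiplied through by $t^{-2lk(\mathcal{L}_\pm)}(t-t^{-1})^{L-1}\prod_{\alpha\neq\beta}p_0^{\mathcal{K}_\alpha}$; this is the ``term-by-term'' check you allude to but do not perform, and it is also what pins down the exponent of $(t-t^{-1})$. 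With those two items made explicit the argument is complete and correct.
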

By the definition in our notation (\ref{defhomfly}), we have
\begin{align} \label{homflyexpansion}
\langle\mathcal{L} \rangle=\sum_{g\geq
0}\hat{p}^{\mathcal{L}}_{2g+1-L}(t)(q-q^{-1})^{2g-L}
\end{align}
where
$\hat{p}^{\mathcal{L}}_{2g+1-L}(t)=t^{w(\mathcal{L})}p^{\mathcal{L}}_{2g+1-L}(t)(t-t^{-1})$.
Hence
\begin{align} \label{homflyexpansioninitial}
\hat{p}^{\mathcal{L}}_{1-L}(t)=t^{\bar{w}(\mathcal{L})}(t-t^{-1})^L\prod_{\alpha=1}^{L}p_{0}^{\mathcal{K}_\alpha}(t)
\end{align}
by the formula (\ref{writhformula}).

We now give the proof of the Theorem 5.2.
\begin{proof}
We only give the proof for the case of a knot $\mathcal{K}$. It is
easy to generalize the proof for any link $\mathcal{L}$. Given two
partitions $\lambda$ and $\mu$ with $|\lambda|=n$ and $|\mu|=m$,
since
\begin{align}
Q_{\lambda,\mu}&=Q_{\lambda}Q_{\mu}^*+\sum_{\sigma\neq
\emptyset}(-1)^{|\sigma|}c_{\sigma,\rho}^{\lambda}c_{\sigma^t,\nu}^{\mu}Q_{\rho}Q_{\nu}^*\\\nonumber
&=\frac{\chi_{\lambda}(C_{(1^n)})\chi_{\mu}(C_{(1^m)})}{z_{(1^n)}z_{(1^m)}}P_{(1^n)}P^*_{(1^m)}+\sum_{s}LT_s.
\end{align}
where the leading term
$\frac{\chi_{\lambda}(C_{(1^n)})\chi_{\mu}(C_{(1^m)})}{z_{(1^n)}z_{(1^m)}}P_{(1^n)}P^*_{(1^m)}$
has $(m+n)$-components by  (3.17) and $LT_s$ denotes the terms with
components less than $(n+m)$ in the skein $\mathcal{C}$ .

By the Definition \ref{defframeindep}, we have
\begin{align}
&W_{[\lambda,\mu]}(\mathcal{K};q,t)\\\nonumber &=q^{-(\kappa_\lambda
+\kappa_\mu)
w(\mathcal{K})}t^{-(n+m)w(\mathcal{K})}\langle\mathcal{K}\star
Q_{\lambda,\mu}\rangle\\\nonumber &=q^{-(\kappa_\lambda+\kappa_\mu)
w(\mathcal{K})}t^{-(n+m)w(\mathcal{K})}\left(\frac{\chi_{\lambda}(C_{(1^n)})\chi_{\mu}(C_{1^m})}{z_{(1^n)}z_{(1^m)}}\langle\mathcal{K}\star
P_{(1^n)}P_{(1^m)}^*\rangle+\sum_s\langle\mathcal{K}\star
LT_s\rangle\right)
\end{align}
and
\begin{align}
s_{\lambda,\mu}^{\#}(q,t)&=\left(\frac{\chi_{\lambda}(C_{(1^n)})\chi_{\mu}(C_{(1^m)})}{z_{(1^n)}z_{(1^m)}}\left(\frac{t-t^{-1}}{q-q^{-1}}\right)^{n+m}
+\sum_s\langle LT_s \rangle\right).
\end{align}

Since $\mathcal{K}\star P_{(1^n)}P_{(1^m)}^*$ is a link with $n+m$
components,  according to the expansion formula
(\ref{homflyexpansion}), we have
\begin{align}
\langle\mathcal{K}\star P_{(1^n)}P_{(1^m)}^*\rangle=\sum_{g\geq
0}\hat{p}_{2g+1-(n+m)}^{\mathcal{K}\star
P_{(1^n)}P_{(1^m)}^*}(t)(q-q^{-1})^{2g-(n+m)}.
\end{align}
For $\mathcal{K}\star LT_s$ with link components $L(\mathcal{K}\star
LT_s)\leq n+m-1$, we also have
\begin{align}
\langle\mathcal{K}\star LT_s\rangle=\sum_{g\geq
0}\hat{p}_{2g+1-L(\mathcal{K}\star LT_s)}^{\mathcal{K}\star
LT_s}(t)(q-q^{-1})^{2g-L(\mathcal{K}\star LT_s)}.
\end{align}
Since
$\frac{\chi_{\lambda}(C_{(1^n)})\chi_{\mu}(C_{(1^m)})}{z_{(1^n)}z_{(1^m)}}\neq
0$, by a direct calculation, we obtain
\begin{align}
\lim_{q\rightarrow
1}\frac{W_{[\lambda,\mu]}(\mathcal{K};q,t)}{s_{[\lambda,\mu]}^{\#}(q,t)}=\frac{t^{-(n+m)w(\mathcal{K})}\hat{p}_{1-(n+m)}^{\mathcal{K}\star
P_{(1^n)}P_{(1^m)}^*}(t)}{(t-t^{-1})^{n+m}}
\end{align}
According to the formula (\ref{homflyexpansioninitial}),
\begin{align}
\hat{p}_{1-(n+m)}^{\mathcal{K}\star
P_{(1^n)}P_{(1^m)}^*}(t)&=t^{\bar{w}(\mathcal{K}\star
P_{(1^n)}P^*_{(1^m)})}(t-t^{-1})^{n+m}(p_0^{\mathcal{K}}(t))^{n+m}.
\end{align}
Moreover, it is clear that $\bar{w}(\mathcal{K}\star
P_{(1^n)}P_{(1^m)}^*)=(n+m)w(\mathcal{K})$, thus we have
\begin{align}
\lim_{q\rightarrow
1}\frac{W_{[\lambda,\mu]}(\mathcal{K};q,t)}{s_{[\lambda,\mu]}^{\#}(q,t)}=p_0^{\mathcal{K}}(t)^{n+m}=P_\mathcal{K}(1,t)^{n+m}.
\end{align}
\end{proof}

\section{Composite invariants and integrality property}
\subsection{LMOV type conjecture for composite invariants}
Given a link $\mathcal{L}$ with $L$ components, for
$\vec{A}=(A^1,...,A^L), \ \vec{\lambda}=(\lambda^1,...,\lambda^L),
\vec{\mu}=(\mu^1,...,\mu^L)\in \mathcal{P}^L$. Let
$c_{\vec{\lambda},\vec{\mu}}^{\vec{A}}=\prod_{\alpha=1}^Lc_{\lambda^\alpha,\mu^\alpha}^{A^\alpha}$,
where $c_{\lambda^\alpha,\mu^\alpha}^{A^\alpha}$ is the
Littlewood-Richardson coefficient. We define the composite invariant
\begin{align}
H_{\vec{A}}(\mathcal{L})=\sum_{\vec{\lambda},\vec{\mu}}c_{\vec{\lambda},\vec{\mu}}^{\vec{A}}W_{[\lambda^1,\mu^1],..,[\lambda^L,\mu^L]}(\mathcal{L}).
\end{align}
The Chern-Simons partition function for composite invariant is the
generating function given by
\begin{align}
Z_{CS}(\mathcal{L};q,t)=\sum_{\vec{A}}H_{\vec{A}}(\mathcal{L};q,t)s_{\vec{A}}(x).
\end{align}
There exists the functions $h_{\vec{A}}(\mathcal{L};q,t)$ determined
by the following expansion
\begin{align}
F_{CS}=\log
Z_{CS}=\sum_{d=1}^{\infty}\frac{1}{d}\sum_{\vec{A}}h_{\vec{A}}(q^d,t^d)s_{\vec{A}}(x^d).
\end{align}
For convenience, we introduce the notation
\begin{align}
T_{AB}(x)=\sum_{\mu}\frac{\chi_{A}(C_\mu)\chi_{B}(C_{\mu})}{z_\mu}p_{\mu}(x).
\end{align}
By the orthogonal relation of the character, we obtain
\begin{align}
T^{-1}_{AB}(x)=\sum_{\mu}\frac{\chi_{A}(C_\mu)\chi_{B}(C_{\mu})}{z_\mu}\frac{1}{p_{\mu}(x)}.
\end{align}
In particularly,
\begin{align}
T_{AB}(q^\rho)=\sum_{\mu}\frac{\chi_{A}(C_\mu)\chi_{B}(C_\mu)}{z_\mu}\prod_{i=1}^{l(\mu)}\frac{1}{q^{\mu_i}-q^{-\mu_i}}
\end{align}
where $q^\rho=(q^{-1},q^{-3},q^{-5},...)$.

In 2009, M. Mari\~no \cite{Mar} proposed the following conjecture:
\begin{conjecture} \label{Marinoconj}
Let $z=q-q^{-1}$, we have
\begin{align}
&\hat{h}_{\vec{B}}(q,t)=\sum_{\vec{A}}h_{\vec{A}}(q,t)T_{\vec{A}\vec{B}}(q^\rho)\in
z^{-2}\mathbb{Z}[z^2,t^{\pm 1}].
\end{align}
In other words, there exist integer invariants $N_{\vec{B},g,Q}$
such that
\begin{align}
\hat{h}_{\vec{B}}(q,t)=\sum_{g\geq 0}\sum_{Q\in
\mathbb{Z}}N_{\vec{B},g,Q}z^{2g-2}t^Q.
\end{align}
\end{conjecture}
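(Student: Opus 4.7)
The plan is to pursue a strategy analogous to the reformulation used in \cite{CLPZ} for the ordinary colored HOMFLYPT LMOV conjecture, adapted to the composite setting by means of the skein elements $R_\nu$ introduced in \eqref{Rv}. First I would rewrite the free energy
\[
F_{CS}=\sum_{d\geq 1}\frac{1}{d}\sum_{\vec A}h_{\vec A}(q^d,t^d)s_{\vec A}(x^d)
\]
in the power-sum basis via Frobenius' formula $s_A=\sum_\mu \chi_A(C_\mu)p_\mu/z_\mu$. The role of $T_{\vec A\vec B}$ is precisely to invert the Schur-to-power-sum transition at the principal specialization $q^\rho$, so $\hat h_{\vec B}$ collects, up to characters, the coefficient of $p_{\vec\mu}(x^d)$ in $F_{CS}$. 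Matching this against the definition of $R_\nu$ as the character-twisted sum $\sum_A\chi_A(\nu)\sum_{\lambda,\mu}c_{\lambda,\mu}^A Q_{\lambda,\mu}$, I expect $\hat h_{\vec B}(q,t)$ to admit an explicit expression, modulo the lower-$d$ corrections produced by the logarithm, as a combination of reformulated composite invariants $\mathcal{R}_{\vec\nu}(\mathcal{L})$ indexed by $\vec\nu$ with $|\vec\nu|=|\vec B|$.

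With this reformulation in place, the integrality part of the conjecture reduces to showing $\check{\mathcal{R}}_{\vec\nu}(\mathcal{L};q,t)\in\mathbb{Z}[z^2,t^{\pm 1}]$ for arbitrary partition vectors $\vec\nu$, the natural generalization of Theorem \ref{homflyintegrality}. The paper handles the row case $\vec\nu=((p),\ldots,(p))$ in Theorem 1.4 by combining the orientation-reversal decomposition of Theorem 1.3 with Theorem \ref{homflyintegrality}. I would try to extend this by writing $R_\nu$ as a product $\prod_i R_{(\nu_i)}$ modulo terms of lower total size using the product formula for the Hadji-Morton basis in $\mathcal{C}$, then inducting on $|\nu|$ so that Littlewood-Richardson coefficients combine with character orthogonality to cancel any denominators lying outside $\mathbb{Z}[z^2,t^{\pm 1}]$.

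For the $z^{-2}$ pole structure, Theorem \ref{Thmlimit} is the key input: it shows that $W_{[\lambda^\alpha,\mu^\alpha]}(\mathcal L;q,t)$ factorizes at $q\to 1$ through $P_{\mathcal{K}_\alpha}(1,t)^{|\lambda^\alpha|+|\mu^\alpha|}$. Combined with the Lickorish--Millett bound (Theorem 5.3) on the lowest power of $z$ in the HOMFLY polynomial of a multi-component link, this should force the cancellations needed to bound the pole order of $\hat h_{\vec B}$ at $z=0$ by $2$, exactly as in the LMOV analysis of the purely colored case. The surviving $z^{-2}$ factor is the standard annulus contribution from $F_{CS}=\log Z_{CS}$ in the genus expansion.

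The hard part will be extending integrality from row partitions to arbitrary $\vec B$: when $\nu$ is not a single row, $R_\nu$ involves genuinely signed combinations of the $Q_{\lambda,\mu}$ and the simple orientation-reversal bookkeeping of Theorem 1.3 no longer applies, so the reduction to Theorem \ref{homflyintegrality} breaks down. A secondary difficulty is the positivity of the integer invariants $N_{\vec B,g,Q}$, which appears to require input beyond skein theory (e.g.\ a BPS-state interpretation from the topological string/Chern--Simons duality); I would therefore aim first at the integrality and pole structure stated in \eqref{Marinoconj}, mirroring how the paper attacks the row case through Theorems 1.4 and 1.5, and treat positivity as a separate target to revisit afterwards.
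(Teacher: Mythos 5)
The statement you are addressing is stated in the paper as Conjecture 6.1 and is \emph{not} proved there: the authors only record that it has been checked for many torus knots and links in the cited references, and the remainder of the paper develops supporting evidence (the reformulated invariants $\check{\mathcal{R}}_p$, their integrality, and the congruent skein relations) rather than a proof. So there is no proof in the paper to compare yours against, and your text is, by your own admission, a program rather than a proof. Two of its steps are genuine gaps rather than routine verifications. First, your reduction of $\hat{h}_{\vec{B}}$ to reformulated composite invariants requires integrality of $\check{\mathcal{R}}_{\vec{\nu}}(\mathcal{L};q,t)$ for \emph{arbitrary} partition vectors $\vec{\nu}$; the paper establishes this only for $\vec{\nu}=((p),\ldots,(p))$ (Theorem 1.4), and that argument leans on the identity $R_{(p)}=P_p+P_p^*$, which has no analogue for general $\nu$: the Appendix expression for $R_\nu$ contains the signed correction term with the factor $(-1)^{l(\tau)}$ summed over decompositions $B\cup C=\nu$, so the orientation-reversal bookkeeping of Theorem 1.3 genuinely fails, exactly as you concede. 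Your proposed fix --- writing $R_\nu$ as $\prod_i R_{(\nu_i)}$ modulo lower-order terms and inducting on $|\nu|$ --- gives no argument that those lower-order terms preserve integrality, and the signs make the required cancellation nontrivial.

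Second, and more seriously, the pole bound $\hat{h}_{\vec{B}}\in z^{-2}\mathbb{Z}[z^2,t^{\pm 1}]$ does not follow from Theorem 5.2 together with Lickorish--Millett. Those results control the lowest power of $z$ in a single decorated HOMFLYPT polynomial, which for a satellite with $n+m$ strands of decoration is of order $z^{-(n+m)}$ per component; the claimed uniform $z^{-2}$ bound requires large-scale cancellations among all terms of $\log\mathcal{Z}_{CS}$ after the plethystic $d$-sum and the $T_{\vec{A}\vec{B}}(q^\rho)$ transform. In the ordinary colored case this cancellation is the hard analytic core of the Liu--Peng proof of the LMOV conjecture (a cut-and-join analysis of the cabling formula), not a consequence of the special-polynomial limit $q\to 1$. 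As written, your proposal therefore reduces Conjecture 6.1 to other open statements of comparable difficulty rather than proving it.
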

The Conjecture \ref{Marinoconj} was checked for a lot of torus knots
and links in \cite{Mar,Stevan}.

\subsection{The framed LMOV type conjecture for composite invariants}
In this subsection, we introduce the framed LMOV type conjecture for
composite invariants. The Conjecture 6.1 can be viewed as a
particular case of this framed LMOV type conjecture with framing
zero.

Given a link $\mathcal{L}$ with $L$ components $\mathcal{K}_\alpha$,
$\alpha=1,...,L$. We define the framed Chern-Simons partition
function as
\begin{align} \label{reduceCS-partition}
&\mathcal{Z}_{CS}(\mathcal{L};q,t)\\\nonumber
&=\sum_{\lambda^\alpha,\mu^\alpha\in
\mathcal{P}}(-1)^{\sum_{\alpha=1}^Lw(\mathcal{K}_\alpha)(|\lambda^\alpha|+|\mu^\alpha|)}\mathcal{H}(\mathcal{L};
\otimes_{\alpha=1}^L
Q_{\lambda^\alpha,\mu^\alpha})\prod_{\alpha=1}^L
s_{\lambda^\alpha}(x^\alpha)s_{\mu^\alpha}(x^\alpha)\\\nonumber
&=\sum_{\vec{A}\in
\mathcal{P}^L}(-1)^{\sum_{\alpha=1}^Lw(\mathcal{K}_\alpha)|A^\alpha|}\mathcal{H}_{\vec{A}}(\mathcal{L};q,t)s_{\vec{A}}(x).
\end{align}
where $\mathcal{H}_{\vec{A}}(\mathcal{L};q,t)$ is the framed
composite invariant defined as follow:
\begin{align}
\mathcal{H}_{\vec{A}}(\mathcal{L};q,t)=\sum_{\vec{\lambda},\vec{\mu}\in\mathcal{P}^L}c_{\vec{\lambda},\vec{\mu}}^{\vec{A}}\mathcal{H}(\mathcal{L};
\otimes_{\alpha=1}^L Q_{\lambda^\alpha,\mu^\alpha}).
\end{align}

There also exist functions $\mathfrak{h}_{\vec{A}}(\mathcal{L};q,t)$
such that:
\begin{align}
\mathcal{F}_{CS}=\log \mathcal{Z}_{CS}
=\sum_{d=1}^\infty\frac{1}{d}\sum_{\vec{A}\in
\mathcal{P}^L,\vec{A}\neq
0}\mathfrak{h}_{\vec{A}}(\mathcal{L};q,t)s_{\vec{A}}(\vec{x}^d).
\end{align}

\begin{conjecture}[Framed LMOV type conjecture for composite invariants]
For a link $\mathcal{L}$ with $L$ components,  we have
\begin{align}
\hat{\mathfrak{h}}_{\vec{B}}(\mathcal{L};q,t)&=\sum_{\vec{A}}\mathfrak{h}_{\vec{A}}(\mathcal{L};q,t)\prod_{\alpha=1}^L
T_{A^\alpha B^\alpha}(q^{\rho})\\\nonumber
&=\sum_{g=0}^{\infty}\sum_{Q\in
\mathbb{Z}}\mathcal{N}_{\vec{B};g,Q}(q-q^{-1})^{2g-2}t^{Q} \in
z^{-2}\mathbb{Z}[z^2,t^{\pm 1}].
\end{align}
In other words, all $\mathcal{N}_{\vec{B},g,Q}\in \mathbb{Z}$, and
$\mathcal{N}_{\vec{B},g,Q}$ vanishes for large $g, Q$.
\end{conjecture}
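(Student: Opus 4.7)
The plan is to mimic the strategy that works for the (non-composite) framed LMOV conjecture, reducing the claim to the integrality result for the reformulated composite invariants (Theorems 1.2 and 1.4) together with a careful analysis of the pole in $z = q-q^{-1}$. The first step is to apply the plethystic logarithm to (6.10) to obtain an explicit Möbius-type formula
\[
\mathfrak{h}_{\vec{A}}(\mathcal{L};q,t)
= \sum_{d \mid \vec{A}} \frac{\mu(d)}{d}\,
\Phi_d\!\Bigl(\log \mathcal{Z}_{CS}\Bigr)_{\vec{A}},
\]
and then expand $s_{\vec{A}}(x)$ in the power-sum basis via $s_{\vec{A}} = \sum_{\vec{\mu}} \chi_{\vec{A}}(C_{\vec{\mu}})/z_{\vec{\mu}}\, p_{\vec{\mu}}$. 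After twisting by $\prod T_{A^\alpha B^\alpha}(q^\rho)$ as in (6.12), the orthogonality relation (3.13) collapses the two character sums and rewrites $\hat{\mathfrak{h}}_{\vec{B}}$ as a sum of products of bracket quantities $\langle \mathcal{L}\star \otimes_\alpha R_{\nu^\alpha}\rangle$, i.e.\ in terms of $\mathcal{R}_{\vec{\nu}}(\mathcal{L};q,t)$ of (1.9), together with signs coming from the writhe factors in (6.9).

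The second step is to rescale by the $[\vec{\nu}]$ factor so that $\mathcal{R}_{\vec{\nu}}$ becomes $\check{\mathcal{R}}_{\vec{\nu}}$. Each factor $1/(q^{\mu_i}-q^{-\mu_i})$ produced by $T_{AB}(q^\rho)$ cancels precisely against the $\{\mu_i\}$ introduced by passing from $R_\nu$ to $\check R_\nu$, up to an overall factor of $z^{-1}$ per component of the underlying diagram. Using Theorem 1.4 (and its obvious extension to mixed partition vectors obtained from Theorem 1.2 component-wise), each $\check{\mathcal{R}}_{\vec{\nu}}(\mathcal{L};q,t)$ lies in $2\mathbb{Z}[z^2, t^{\pm 1}]$, so the rationality (symmetry under $q \leftrightarrow q^{-1}$) is automatic and the only issue is the order of the pole and genuine integrality of the resulting rational expression.

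The third step is the pole analysis. The connectedness produced by the logarithm means that at least one component of $\mathcal{L}$ survives in each contribution, which cuts one factor of $z$ and explains the denominator $z^{-2}$ in the conjecture: one power of $z$ from the single remaining connected piece and one from the overall Chern--Simons normalization. I would formalize this by using the Lickorish--Millett expansion (Theorem 5.3) to read off the leading $z$-order of $\langle \mathcal{L}\star \otimes_\alpha R_{\nu^\alpha}\rangle$ after partial cancellation, exactly as was done in the non-composite case (Conjecture 6.1, proved in the torus examples).

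The hard part will be the integrality (rather than rationality) of $\hat{\mathfrak{h}}_{\vec{B}}$, i.e.\ showing $\mathcal{N}_{\vec{B},g,Q} \in \mathbb{Z}$ and not merely in $\mathbb{Q}$. The obstruction lives in the Möbius inversion: the $1/d$ factors must be absorbed by divisibility properties of the $d$-th Adams operator applied to $\check{\mathcal{R}}$. For the non-composite framed case the analogue of this step uses a cyclic-group action argument on HOMFLY skein elements, and I expect the same mechanism — now applied to the enlarged basis $\{Q_{\lambda,\mu}\}$ of $\mathcal{C}$ and to $R_\nu$ defined by (1.8) — to supply the required congruences $\Psi_d(\check{\mathcal{R}}_{\vec{\nu}}) \equiv \check{\mathcal{R}}_{\vec{\nu}}^d \pmod{d}$ after specialization. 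Combined with the pole bound above, this would complete the proof. The two technical checks I anticipate as most delicate are (i) carrying the sign $(-1)^{\sum_\alpha w(\mathcal{K}_\alpha)|A^\alpha|}$ in (6.9) consistently through the plethystic inversion, and (ii) establishing the Adams-type congruence for the mirror-pair basis $Q_{\lambda,\mu}$, since the previous skein-theoretic proofs only treat $\mathcal{C}_+$.
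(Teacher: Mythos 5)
This statement is a \emph{conjecture} in the paper, not a theorem: the authors explicitly state that it was first studied in the reference on composite invariants and unoriented topological string amplitudes, that it was only checked there for torus knots, and that in the present paper it is only verified on further examples (the Hopf link $T(2,2)(m,n)$ with small framings in Example 6.3). There is no proof in the paper to compare against, and your proposal does not supply one either: it is a strategy outline whose two essential steps are deferred to analogies that are themselves unproved. Concretely, (i) the pole bound --- that after the plethystic logarithm and the twist by $\prod_\alpha T_{A^\alpha B^\alpha}(q^\rho)$ the order of the pole in $z$ drops to exactly $2$ --- is the actual content of an LMOV-type statement; the Lickorish--Millett expansion gives the lowest $z$-degree $1-L$ of a single bracket $\langle \mathcal{L}\star\otimes_\alpha R_{\nu^\alpha}\rangle$, but the cancellation of the higher-order poles in the full logarithmic combination is precisely the hard cut-and-join/cabling analysis that constituted the proof of the original LMOV conjecture, and you do not carry it out. (ii) The integrality of $\mathcal{N}_{\vec{B},g,Q}$ requires absorbing the $1/d$ from the M\"obius inversion via Adams-type congruences $\Psi_d(\cdot)\equiv(\cdot)^d \bmod d$; you acknowledge this is open for the mirror-pair basis $Q_{\lambda,\mu}$ and simply ``expect'' it to hold. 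In the paper even the non-composite framed analogue (Conjecture 6.1 and the congruent skein relations of Conjecture 7.2) remains conjectural, so there is no established result to transport.

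A further concrete issue: you invoke ``Theorem 1.4 and its obvious extension to mixed partition vectors,'' but Theorem 1.4 (that $\check{\mathcal{R}}_p\in 2\mathbb{Z}[z^2,t^{\pm1}]$) rests on the identity $R_{(p)}=P_p+P_p^*$, which is special to one-row partitions. For general $\nu$ the Appendix formula expresses $R_\nu$ as $\sum_{B\cup C=\nu}\frac{z_\nu}{z_Bz_C}P_BP_C^*$ plus correction terms with nontrivial rational coefficients $\frac{z_\nu}{z_\eta z_\tau z_\pi}(-1)^{l(\tau)}$, so the reduction to orientation-reversed $\check{\mathcal{Z}}_{\vec{\mu}}$'s is not component-wise and the integrality (let alone evenness) of $\check{\mathcal{R}}_{\vec{\nu}}$ for general $\vec{\nu}$ is not established in the paper. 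As written, your proposal restates the expected architecture of a proof but leaves every genuinely difficult step as an expectation, so it does not prove the conjecture.
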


The Conjecture 6.2 was studied first in \cite{PBR}. However, it was
only checked for torus knots in that paper. In this paper,  we have
checked a lot of examples for torus links.  In the following, we
provide the example for Hopf link with different framings.

\begin{example}
As to the Hopf link $T(2,2)$, it has two components $\mathcal{K}_1$
and $\mathcal{K}_2$. In fact, $\mathcal{K}_1=\mathcal{K}_2=U$. We
use the notation $T(2,2)(m,n)$ to denote the link obtained by adding
$m$ and $n$ kinks to $\mathcal{K}_1$ and $\mathcal{K}_2$
respectively. Thus, the link $T(2,2)(m,n)$ has the framing
$\tau=(\tau_1,\tau_2)=(m,n)$.

We have computed $\hat{\mathfrak{h}}_{\vec{B}}(T(2,2)(m,n);q,t)$ for
small  $m,n$ and $\vec{B}$.

(1) For $T(2,2)(0,0)$:

$$
\hat{\mathfrak{h}}_{(2)(2)}=(t^{2}-1)((t^{-2}-7+6t^{2})z^{-2}+2t^{2}).
$$
$$\hat{\mathfrak{h}}_{(2)(1^{2})}=
(t^{2}-1)(-2t^{-4}+3t^{-2}-3+2t^{2})z^{-2}.
$$
$$
\hat{\mathfrak{h}}_{(1^{2})(2)}=
(t^{2}-1)(-2t^{-4}+3t^{-2}-3+2t^{2})z^{-2}.
$$
$$
\hat{\mathfrak{h}}_{(1^{2})(1^{2})}=(t^{2}-1)((-6t^{-4}+7t^{-2}
-1)z^{-2}-2t^{-4}).
$$

(2)For  $T(2,2)(1,-1)$:

$$
\hat{\mathfrak{h}}_{(2)(2)}=(t^{2}-1)((7t^{-2}-11+4t^{2})z^{-2}+(-2+2t^{2})).
$$

$$ \hat{\mathfrak{h}}_{(2)(1^{2})}=
(t^{2}-1)((-2t^{-4}+19t^{-2}-19+2t^{2})z^{-2}+(4t^{-2}-4)).
$$

$$
\hat{\mathfrak{h}}_{(1^{2})(2)}=(t^{2}-1)(-2t^{-4}+3t^{-2}-3+2t^{2})z^{-2}.
$$

$$
\hat{\mathfrak{h}}_{(1^{2})(1^{2})}=
(t^{2}-1)((-4t^{-4}+11t^{-2}-7)z^{-2}+(-2t^{-4}+2t^{-2})).
$$

(3)For $T(2,2)(1,0)$:

$$
\hat{\mathfrak{h}}_{(2)(2)}=(t^{2}-1)((3-17t^{2}+14t^{4})z^{-2}+(-4t^{2}
+10t^{4})+2t^{4}z^{2}).
$$

$$
\hat{\mathfrak{h}}_{(2)(1^{2})}=(t^{2}-1)((7-11t^{2}+4t^{4}
)z^{-2}+(-2t^{2}+2t^{4})).
$$

$$
\hat{\mathfrak{h}}_{(1^{2})(2)}=(t^{2}-1)((1-7t^{2}+6t^{4})z^{-2}+2t^{4})
$$

$$
\hat{\mathfrak{h}}_{(1^{2})(1^{2})}=(t^{2}-1)(-2t^{-2}+3-3t^{2}+2t^{4})z^{-2}
$$

(4) For $T(2,2)(-1,0)$:
$$
\hat{\mathfrak{h}}_{(2)(2)}=(t^{2}-1)(-2t^{-6}+3t^{-4}-3t^{-2}+2)z^{-2}
$$

$$
\hat{\mathfrak{h}}_{(2)(1^{2})}=(t^{2}-1)((-6t^{-6}+7t^{-4}
-t^{-2})z^{-2}-2t^{-6})$$

$$
\hat{\mathfrak{h}}_{(1^{2})(2)}= (t^{2}-1)((-4t^{-6}+11t^{-4}
-7t^{-2})z^{-2}+(-2t^{-6}+2t^{-4}))
$$
$$
\hat{\mathfrak{h}}_{(1^{2})(1^{2})} = (t^{2}-1)((-14t^{-6}+17t^{-4}
-3t^{-2})z^{-2}+(-10t^{-6}+4t^{-4})-2t^{-6}z^{2})
$$

(5) For $T(2,2)(1,1)$:
$$
\hat{\mathfrak{h}}_{(2)(2)}=(t^{2}-1)((9t^{2}-39t^{4}+30t^{6}
)z^{-2}+(-16t^{4}+34t^{6})+(-2t^{4}+14t^{6})z^{2}+2t^{6}z^{4}).
$$

$$
\hat{\mathfrak{h}}_{(2)(1^{2})}=(t^{2}-1)((3t^{2}-17t^{4}+14t^{6}
)z^{-2}+(-4t^{4}+10t^{6})+2t^{6}z^{2}).
$$
$$
\hat{\mathfrak{h}}_{(1^{2})(2)}=(t^{2}-1)((3t^{2}-17t^{4}+14t^{6}
)z^{-2}+(-4t^{4}+10t^{6})+2t^{6}z^{2}).
$$
$$
\hat{\mathfrak{h}}_{(1^{2})(1^{2})}=(t^{2}-1)((t^{2}-7t^{4}
+6t^{6})z^{-2}+2t^{6}).
$$

\end{example}

\section{Reformulated composite invariants and congruent skein relation}
\subsection{Review of the previous work}
In the joint work \cite{CLPZ} with K. Liu and P. Peng, for $\mu\in
\mathcal{P}$, we use the skein element $P_{\mu}\in
\mathcal{C}_{|\mu|,0}$ to introduce the reformulated colored
HOMFLYPT invariant for a link $\mathcal{L}$. Let
$\vec{\mu}=(\mu^1,...,\mu^L)\in \mathcal{P}^L$,
\begin{align}
\mathcal{Z}_{\vec{\mu}}(\mathcal{L})=\langle \mathcal{L}\star
\otimes _{\alpha=1}^L P_{\mu^\alpha} \rangle, \
\check{\mathcal{Z}}_{\vec{\mu}}(\mathcal{L})=[\vec{\mu}]\check{\mathcal{Z}}_{\vec{\mu}}(\mathcal{L}).
\end{align}
The framed LMOV conjecture is reduced to the study of the properties
of these reformulated colored HOMFLYPT invariants. From the view of
the HOMFLY skein theory, the reformulated colored HOMFLYPT invariant
$\mathcal{Z}_{\vec{\mu}}(\mathcal{L})$ or
$\check{\mathcal{Z}}_{\vec{\mu}}(\mathcal{L})$ is simpler than the
colored HOMFLYPT invariant $W_{\vec{\mu}}(\mathcal{L})$, since the
expression of $P_{\vec{\mu}}$ is simpler than $Q_{\vec{\mu}}$ and
has the nice property, see \cite{CLPZ} for a detailed descriptions.
By using the HOMFLY skein theory, we prove in \cite{CLPZ} that the
reformulated colored HOMFLYPT invariants satisfy the following
integrality property.
\begin{theorem}
For any link $\mathcal{L}$ with $L$ components,
\begin{align}
\check{\mathcal{Z}}_{\vec{\mu}}(\mathcal{L};q,t)\in
\mathbb{Z}[z^2,t^{\pm 1}].
\end{align}
where $z=q-q^{-1}$.
\end{theorem}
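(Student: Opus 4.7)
The plan is to prove integrality and the $z^2$-structure in two separate stages, both worked out inside the HOMFLY skein of the annulus. First, I would unwind the definition using the identity $\{m\}P_m = X_m = \sum_{j=0}^{m-1} A_{m-1-j,j}$ from (2.15), so that
\begin{align*}
\check{\mathcal{Z}}_{\vec{\mu}}(\mathcal{L};q,t) = \Bigl\langle\mathcal{L}\star \bigotimes_{\alpha=1}^L\prod_{i=1}^{l(\mu^\alpha)} X_{\mu^\alpha_i}\Bigr\rangle
\end{align*}
expands as an explicit integer linear combination of framed HOMFLYPT brackets of honest satellite link diagrams. Since each $A_{i,j}$ is the closure of a braid whose underlying permutation is a single cycle, decorating the component $\mathcal{K}_\alpha$ by $\prod_i X_{\mu^\alpha_i}$ produces, term by term, a link in which the cable around $\mathcal{K}_\alpha$ contributes exactly $l(\mu^\alpha)$ components; every term in the expansion therefore has $L' = \sum_\alpha l(\mu^\alpha)$ components.

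Next, I would invoke the Lickorish--Millett structure theorem recalled in Section~5: for such a satellite $D$ with $L'$ components the bracket $\langle D\rangle$ lies in $t^{w(D)}(t-t^{-1})z^{-L'}\mathbb{Z}[z^2,t^{\pm1}]$. A priori this only places $\check{\mathcal{Z}}_{\vec{\mu}}(\mathcal{L};q,t)$ inside $z^{-L'}(t-t^{-1})\,\mathbb{Z}[z^{\pm1},t^{\pm1}]$, so the core work is to show that the particular integer combination manufactured by the $X_{\mu^\alpha_i}$'s kills every negative $z$-power, and also absorbs the spurious factor of $(t-t^{-1})$. My approach would be to use the eigenvector property of $P_m$ under the meridian map $\varphi$ (Section~3.3--3.4): write each $\prod_i X_{\mu^\alpha_i}$ in the $Q_{\lambda,\emptyset}$-basis via Newton's identities, and then show that the leading $z^{-L'}$ coefficients of the $\langle\mathcal{L}\star\bigotimes Q_{\lambda^\alpha,\emptyset}\rangle$ combine into a symmetric-function identity whose sum vanishes. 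Equivalently, one can argue inductively on the number of crossings: the classical skein relation $q\sigma - q^{-1}\sigma^{-1}=z$ allows each term in $X_m = \sum_j A_{m-1-j,j}$ to be paired with another where one crossing is reversed, and the sum telescopes into an expression whose bracket has $z$-order bounded below by $0$.

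Having obtained $\check{\mathcal{Z}}_{\vec{\mu}}(\mathcal{L};q,t)\in\mathbb{Z}[z,t^{\pm1}]$, the last step is to promote this to $\mathbb{Z}[z^2,t^{\pm1}]$ via a $z\mapsto -z$ symmetry. Under the mirror involution $\overline{\phantom{x}}$ of Section~2.4, $q\mapsto q^{-1}$, $t\mapsto t^{-1}$, while $A_{i,j}\mapsto A_{j,i}$ so that each $X_m$ is fixed. Combined with the invariance of the bracket under simultaneous mirroring of the decorated link and the coefficient ring, this forces
\begin{align*}
\check{\mathcal{Z}}_{\vec{\mu}}(\mathcal{L};q^{-1},t^{-1}) = \check{\mathcal{Z}}_{\vec{\mu}}(\mathcal{L};q,t)
\end{align*}
up to a tracked monomial in $t$, ruling out odd powers of $z=q-q^{-1}$ and completing the proof.

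The main obstacle is the middle step: generic skein expansions only guarantee that the lowest power of $z$ in a link bracket is $1-L'$, whereas the theorem needs $0$. Establishing this stronger cancellation requires identifying the precise combinatorial reason that $P_m$ (as opposed to an arbitrary element of $\mathcal{C}_{m,0}$) forces the top Lickorish--Millett coefficient to vanish on each satellite; finding the right identity in the Hecke algebra that transports Newton's relation between power sums and complete symmetric functions into a statement about braid closures is where the real work lies.
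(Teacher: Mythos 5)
The paper does not actually reprove this theorem here---it is imported from \cite{CLPZ}---but the argument there is short, and your attempt misses it because of a normalization slip in your very first display. Since $[p]=q^{p}-q^{-p}=z\{p\}$ and $\{m\}P_m=X_m$, the prefactor $[\vec{\mu}]=\prod_{\alpha,i}[\mu^{\alpha}_i]$ contributes one factor of $z$ for each part of each $\mu^{\alpha}$, so that
\begin{align*}
\check{\mathcal{Z}}_{\vec{\mu}}(\mathcal{L};q,t)
=[\vec{\mu}]\,\Bigl\langle\mathcal{L}\star\bigotimes_{\alpha=1}^{L}P_{\mu^{\alpha}}\Bigr\rangle
=z^{L'}\,\Bigl\langle\mathcal{L}\star\bigotimes_{\alpha=1}^{L}\prod_{i=1}^{l(\mu^{\alpha})}X_{\mu^{\alpha}_i}\Bigr\rangle,
\qquad L'=\sum_{\alpha=1}^{L}l(\mu^{\alpha}),
\end{align*}
whereas you equate $\check{\mathcal{Z}}_{\vec{\mu}}$ with the bare bracket on the right. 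Having lost the $z^{L'}$, you are forced to posit that the bracket itself lies in $\mathbb{Z}[z,t^{\pm1}]$, i.e.\ that the $A_{i,j}$-terms conspire to kill all negative powers of $z$. That is false, not merely unproved: for $\mathcal{L}=U$ and $\vec{\mu}=((2))$ one has $X_2=A_{1,0}+A_{0,1}$, two unknots with a single positive (resp.\ negative) kink, so $\langle U\star X_2\rangle=(t+t^{-1})(t-t^{-1})/z=(t^{2}-t^{-2})/z$, which has a genuine simple pole at $z=0$; only $[2]\langle U\star P_2\rangle=z\langle U\star X_2\rangle=t^{2}-t^{-2}$ is regular. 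So the ``core work'' you identify as the main obstacle is a non-theorem, and the telescoping and symmetric-function cancellations you sketch for it cannot exist.

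Once the $z^{L'}$ is restored, no cancellation is needed and the proof closes at once, exactly along the lines of your first and second steps. Each $A_{i,j}$ is the closure of a braid whose underlying permutation is a single cycle, hence a single closed curve in the annulus, so every monomial in the expansion of $\prod_i X_{\mu^{\alpha}_i}$ is one diagram with exactly $l(\mu^{\alpha})$ components and coefficient $+1$; therefore $\langle\mathcal{L}\star\bigotimes_{\alpha}\prod_iX_{\mu^{\alpha}_i}\rangle$ is a $\mathbb{Z}$-linear combination of framed brackets of honest $L'$-component links $D$. By the Lickorish--Millett theorem as packaged in (5.10), $\langle D\rangle=\sum_{g\ge0}\hat{p}^{D}_{2g+1-L'}(t)\,z^{2g-L'}$ with $\hat{p}^{D}_{2g+1-L'}\in\mathbb{Z}[t^{\pm1}]$, i.e.\ $\langle D\rangle\in z^{-L'}\mathbb{Z}[z^{2},t^{\pm1}]$: the pole order is at most $L'$ \emph{and} all exponents of $z$ share the parity of $-L'$. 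Multiplying by $z^{L'}$ then lands exactly in $\mathbb{Z}[z^{2},t^{\pm1}]$, giving integrality and evenness in one stroke; in particular your final mirror-involution step is unnecessary (and as written it is also suspect, since $q\mapsto q^{-1}$, $t\mapsto t^{-1}$ relates the bracket of a diagram to that of its mirror, not to itself). Your instinct to reduce to the $X_m$ and to invoke Lickorish--Millett was exactly right; the only missing ingredient was the factor $[\vec{\mu}]=z^{L'}\prod_{\alpha,i}\{\mu^{\alpha}_i\}$ that the theorem's normalization is designed to supply.
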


In particular, when $\vec{\mu}=((p),...,(p))$ with $L$ row
partitions $(p)$, for $p\in\mathbb{Z}_+$ . We use the notation
$\check{\mathcal{Z}}_{p}(\mathcal{L};q,t)$ to denote the
reformulated colored HOMFLY-PT invariant
$\check{\mathcal{Z}}_{((p),...,(p))}(\mathcal{L};q,t)$ for
simplicity. We have proposed the following congruent skein relations
for the reformulated colored HOMFLY-PT invariant
$\check{\mathcal{Z}}_p(\mathcal{L};q,t)$ in \cite{CLPZ}:
\begin{conjecture} \label{conjcongruentskeinhomfly}
For any link $\mathcal{L}$  and a prime number $p$, we have
\begin{align}
\check{\mathcal{Z}}_{p}(\mathcal{L}_+;q,t)-\check{\mathcal{Z}}_{p}(\mathcal{L}_-;q,t)\equiv
(-1)^{p-1}\check{\mathcal{Z}}_{p}(\mathcal{L}_0;q,t) \mod \{p\}^2,
\end{align}
when the crossing is the self-crossing of a knot, and
\begin{align}
\check{\mathcal{Z}}_{p}(\mathcal{L}_+;q,t)-\check{\mathcal{Z}}_{p}(\mathcal{L}_-;q,t)\equiv
(-1)^{p-1} p[p]^2\check{\mathcal{Z}}_{p}(\mathcal{L}_{0};q,t) \mod
 \{p\}^2[p]^2.
\end{align}
when the crossing is the linking of  two different components of the
link $\mathcal{L}$. Where the notation $A\equiv B \mod C$ denotes $
\frac{A-B}{C} \in \mathbb{Z}[(q-q^{-1})^2,t^{\pm 1}]. $ And
$[p]=q^p-q^{-p}$, $\{p\}=(q^p-q^{-p})/(q-q^{-1})$.
\end{conjecture}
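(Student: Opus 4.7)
The plan is to reduce the conjectured congruence to a local computation in the HOMFLY skein of a $2p$-tangle cabled by $P_p$ on each of its two strands, and then to exploit the $Q_\lambda$-eigenbasis of the meridian map together with classical arithmetic congruences on content sums of partitions of $2p$ modulo $p$. Since $\mathcal{L}_+$, $\mathcal{L}_-$, and $\mathcal{L}_0$ differ only in a small disk $D$ around a single crossing, and since $\check{\mathcal{Z}}_p$ is built by cabling each component with $P_p\in\mathcal{C}_{p,0}$, the difference $\check{\mathcal{Z}}_p(\mathcal{L}_+)-\check{\mathcal{Z}}_p(\mathcal{L}_-)$ equals the skein bracket of $\mathcal{L}\setminus D$ evaluated against the local tangle $T:=(\sigma_+-\sigma_-)\cdot(P_p\otimes P_p)\in H_{2p,0}$, where $\sigma_\pm$ denotes the $p$-strand cabling of a $\pm$-crossing. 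Thus it suffices to show that, modulo the appropriate ideal of $\Lambda$, the local tangle $T$ is congruent to $(-1)^{p-1}$ (respectively $(-1)^{p-1}p[p]^2$) times the local $P_p$-cabled oriented smoothing.

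To carry this out, I would expand $P_p\cdot P_p=\sum_{\lambda\vdash 2p}\chi_\lambda(C_{(p^2)})\,Q_\lambda$ by the Frobenius formula, and then use the fact that on each isotypic component indexed by $\lambda\vdash 2p$ the cabled crossing $\sigma_+$ acts as a scalar $\varepsilon_\lambda\,q^{\kappa_\lambda/2}$ with $\varepsilon_\lambda=\pm 1$ (the standard R-matrix computation in the HOMFLY skein, cf.\ \cite{HM}). This reduces the whole statement to a numerical identity: for each $\lambda\vdash 2p$ one must control the difference $\varepsilon_\lambda q^{\kappa_\lambda/2}-\varepsilon_\lambda^{-1}q^{-\kappa_\lambda/2}$ modulo $\{p\}^2$ (respectively modulo $\{p\}^2[p]^2$ in the linking case), weighted by the character values $\chi_\lambda(C_{(p^2)})$, and compare the result to the $\lambda$-expansion of the $P_p$-cabled smoothing.

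The crucial combinatorial input is the identity $\kappa_\lambda=2\sum_{x\in\lambda}c(x)$ together with the classical result of Nakayama--James--Kerber that the residue of $\sum_{x\in\lambda}c(x)$ modulo $p$ is determined by the $p$-core of $\lambda$. For $\lambda\vdash 2p$ with empty $p$-core the scalar $q^{\kappa_\lambda/2}-q^{-\kappa_\lambda/2}$ is automatically divisible by $\{p\}^2$; for $\lambda$ with nontrivial $p$-core, a finer analysis using the $p$-quotient together with the $q$-binomial congruence $\binom{p}{k}_q\equiv 0\mod\{p\}$ for $0<k<p$ should isolate the leading $(-1)^{p-1}$. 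The extra factor of $[p]^2$ in the linking case arises because the two $p$-cables lie in distinct components, so that iterating the basic skein relation $\sigma-\sigma^{-1}=z$ on the $p^2$ mixed crossings forces an additional $z^2\{p\}^2=[p]^2$ into the modulus; the coefficient $p$ on the right-hand side is a $q$-Fermat phenomenon reflecting $[p]\equiv p(q-q^{-1})\mod(q-q^{-1})^3$.

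The main obstacle I expect is precisely this last arithmetic step: controlling $\varepsilon_\lambda q^{\kappa_\lambda/2}-\varepsilon_\lambda^{-1}q^{-\kappa_\lambda/2}\mod\{p\}^2$ uniformly over all $\lambda\vdash 2p$, and isolating the leading contribution producing $(-1)^{p-1}$ (respectively $(-1)^{p-1}p$) on the right-hand side. A natural route is to pin down the pattern by explicit calculation for small $p$ -- the $p=2$ case being already handled in Theorem 1.6 -- and then to abstract the argument by combining the Murnaghan--Nakayama rule for $\chi_\lambda(C_{(p^2)})$ with an induction on the size of the $p$-quotient of $\lambda$.
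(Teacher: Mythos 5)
You should first be aware that the statement you set out to prove is presented in the paper as a \emph{conjecture}: the authors give no proof of it, only the remark that it has been tested on many examples in \cite{CLPZ}, a corollary stated ``assuming Conjecture \ref{conjcongruentskeinhomfly} is right,'' and proofs of isolated special cases (Theorem 7.8 handles a $p=2$ torus-link instance of the companion congruence for composite invariants, quoting the $p=2$ case of the present conjecture from Theorem 4.4 of \cite{CLPZ}). So there is no proof in the paper to compare yours against, and your text must stand on its own as an attempt at an open problem.

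Judged that way, your outline follows the natural route (cabling, the $Q_\lambda$-eigenbasis of the meridian map, character arithmetic), which is indeed how the known special cases are verified, but it is a plan rather than a proof, and the step you yourself flag as the ``main obstacle'' --- controlling the eigenvalue differences modulo $\{p\}^2$ uniformly over $\lambda\vdash 2p$ and extracting the coefficients $(-1)^{p-1}$ and $(-1)^{p-1}p[p]^2$ --- is precisely the content of the conjecture and is left open. Several intermediate claims also need repair. First, the localization to a tangle $(\sigma_+-\sigma_-)(P_p\otimes P_p)$ is only clean in the linking case; at a self-crossing the two $p$-strand bundles belong to a single cable carrying one copy of $P_p$, and the oriented smoothing changes the number of components, hence the normalizing prefactor $[\vec\mu]$ of $\check{\mathcal{Z}}$ changes by a factor of $[p]$ between $\mathcal{L}_\pm$ and $\mathcal{L}_0$; this bookkeeping is absent. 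Second, the cabled crossing acts on the $Q_\lambda$-isotypic piece of $Q_\mu Q_\nu$ by $\pm q^{(\kappa_\lambda-\kappa_\mu-\kappa_\nu)/2}$ times a nontrivial power of $t$ coming from the framing and linking of the cables; since $\check{\mathcal{Z}}_p$ is the framed bracket, these $t$-powers cannot be suppressed. Third, the assertion that an empty $p$-core forces $q^{\kappa_\lambda/2}-q^{-\kappa_\lambda/2}$ to be divisible by $\{p\}^2$ is unjustified and, term by term, false: $p\mid\kappa_\lambda/2$ yields only one factor $[p]=(q-q^{-1})\{p\}$, and $(q-q^{-1})\{p\}$ is not divisible by $\{p\}^2$ in $\mathbb{Z}[(q-q^{-1})^2,t^{\pm1}]$ (specialize $q\to 1$). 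Any divisibility by $\{p\}^2$ must come from cancellation among the character-weighted terms, which is exactly the uniform arithmetic statement your proposal does not supply.
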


The Conjecture \ref{conjcongruentskeinhomfly} has been tested by a
lot of examples in \cite{CLPZ}. As the application, we have the
following result for any link $\mathcal{L}$.

\begin{corollary}[Assuming Conjecture \ref{conjcongruentskeinhomfly} is right] Let $\mathcal{L}$ be a link with $L$ components
$\mathcal{K}_\alpha$, $\alpha=1,...,L$. Define
$\bar{w}(\mathcal{L})=\sum_{\alpha=1}^{L}w(\mathcal{K}_\alpha)$,
$w(\mathcal{K})$ denotes the writhe number of the knot
$\mathcal{K}$. For any prime number $p$, we have
\begin{align}
\check{\mathcal{Z}}_{p}(\mathcal{L};q,t)\equiv
(-1)^{(p-1)\bar{w}(\mathcal{L})}
\check{\mathcal{Z}}_1(\mathcal{L};q^p,t^p) \mod \{p\}^2.
\end{align}
\end{corollary}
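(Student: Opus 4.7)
The plan is to argue by induction on the complexity of a diagram of $\mathcal{L}$ (say the lexicographic pair of crossing number and unknotting number), reducing to the base case of a disjoint union of framed unknots via iterated applications of Conjecture 7.3. Set
\[
\Phi(\mathcal{L};q,t):=\check{\mathcal{Z}}_p(\mathcal{L};q,t)-(-1)^{(p-1)\bar{w}(\mathcal{L})}\check{\mathcal{Z}}_1(\mathcal{L};q^p,t^p);
\]
the goal is to show that $\Phi(\mathcal{L};q,t)\in\{p\}^2\mathbb{Z}[z^2,t^{\pm1}]$.

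The key structural observation that drives the induction is the identity $[p]^2=z^2\{p\}^2$. In the \emph{linking-crossing} case, Conjecture 7.3 yields
\[
\check{\mathcal{Z}}_p(\mathcal{L}_+)-\check{\mathcal{Z}}_p(\mathcal{L}_-)\equiv(-1)^{p-1}p[p]^2\check{\mathcal{Z}}_p(\mathcal{L}_0)\pmod{\{p\}^2[p]^2},
\]
but since $[p]^2=z^2\{p\}^2$ and $\check{\mathcal{Z}}_p(\mathcal{L}_0)\in\mathbb{Z}[z^2,t^{\pm1}]$ by Theorem 7.1, the right-hand side already lies in $\{p\}^2\mathbb{Z}[z^2,t^{\pm1}]$; hence $\check{\mathcal{Z}}_p(\mathcal{L}_+)\equiv\check{\mathcal{Z}}_p(\mathcal{L}_-)\pmod{\{p\}^2}$. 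I would verify the corresponding statement on the $\check{\mathcal{Z}}_1(\cdot;q^p,t^p)$ side by substituting $q\mapsto q^p$ into the classical framed HOMFLY skein $\langle\mathcal{L}_+\rangle-\langle\mathcal{L}_-\rangle=z\langle\mathcal{L}_0\rangle$: after accounting for the component-count normalization, the coefficient becomes $[p]^2$, again vanishing modulo $\{p\}^2$. Since $\bar{w}$ is preserved under linking-crossing changes, this forces $\Phi(\mathcal{L}_+)\equiv\Phi(\mathcal{L}_-)\pmod{\{p\}^2}$. In the \emph{self-crossing} case, Conjecture 7.3 reads $\check{\mathcal{Z}}_p(\mathcal{L}_+)-\check{\mathcal{Z}}_p(\mathcal{L}_-)\equiv(-1)^{p-1}\check{\mathcal{Z}}_p(\mathcal{L}_0)\pmod{\{p\}^2}$, and smoothing splits a component so the substituted classical skein becomes the clean identity $\check{\mathcal{Z}}_1(\mathcal{L}_+;q^p,t^p)-\check{\mathcal{Z}}_1(\mathcal{L}_-;q^p,t^p)=\check{\mathcal{Z}}_1(\mathcal{L}_0;q^p,t^p)$; a direct computation of the writhe shift $\bar{w}(\mathcal{L}_0)-\bar{w}(\mathcal{L}_+)=-1-2\,lk(\mathcal{K}_\alpha',\mathcal{K}_\alpha'')$ between the two split components produces exactly the sign $(-1)^{p-1}$ after the $(-1)^{(p-1)\bar{w}}$ adjustment. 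Therefore $\Phi(\mathcal{L}_+)-\Phi(\mathcal{L}_-)\equiv(-1)^{p-1}\Phi(\mathcal{L}_0)\pmod{\{p\}^2}$, and the induction closes because both $\mathcal{L}_-$ and $\mathcal{L}_0$ are strictly simpler in the complexity ordering.

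For the base case, a disjoint union of framed unknots reduces by multiplicativity to a single framed unknot $U_\tau$. Expanding $P_p=\sum_{\lambda\vdash p}\chi_\lambda(C_{(p)})Q_\lambda$ and applying the framing action $\tau(Q_\lambda)=q^{\kappa_\lambda}t^{|\lambda|}Q_\lambda$ on the annulus, I would compute
\[
\check{\mathcal{Z}}_p(U_\tau;q,t)=[p]\,t^{p\tau}\sum_{\lambda\vdash p}\chi_\lambda(C_{(p)})q^{\kappa_\lambda\tau}s_\lambda^\#(q,t),\qquad \check{\mathcal{Z}}_1(U_\tau;q^p,t^p)=t^{p\tau}(t^p-t^{-p}).
\]
The required identity modulo $\{p\}^2$ reduces to a cyclotomic congruence over hook partitions $\lambda=(p-k,1^k)$, for which $\chi_\lambda(C_{(p)})=(-1)^k$ by Murnaghan--Nakayama and $\kappa_{(p-k,1^k)}=p(p-1)-2pk$; it is verified using the explicit hook-length formula for $s_\lambda^\#(q,t)$ together with $q^p\equiv q^{-p}\pmod{\{p\}}$.

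\textbf{Main obstacle.} The delicate part is this base-case cyclotomic identity on $U_\tau$: one must extract the correct sign $(-1)^{(p-1)\tau}$ from a sum of $p$ weighted hook-partition contributions and reconcile it with $t^p-t^{-p}$ modulo $\{p\}^2$. The inductive propagation is otherwise conceptually clean once the structural observation $[p]^2\in\{p\}^2\mathbb{Z}[z^2,t^{\pm1}]$ (which trivializes the linking case modulo $\{p\}^2$) and the writhe/sign bookkeeping under smoothings are in place.
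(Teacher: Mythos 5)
The paper states this corollary without proof: it is presented purely as an application of Conjecture \ref{conjcongruentskeinhomfly}, with only the remark that it is equivalent to a special case of the framed LMOV conjecture, so there is no argument in the text to compare against. Your skein-theoretic induction is the natural route and its reduction steps check out: $[p]^2=z^2\{p\}^2$ together with the integrality $\check{\mathcal{Z}}_p(\mathcal{L}_0)\in\mathbb{Z}[z^2,t^{\pm1}]$ does kill the linking-crossing contribution modulo $\{p\}^2$; the identity $\check{\mathcal{Z}}_1(\mathcal{L};q^p,t^p)=[p]^{L}\langle\mathcal{L}\rangle(q^p,t^p)$ makes the substituted classical skein relation produce the factor $[p]^2$ (linking case, one fewer component) and the clean identity (self-crossing case, one more component); and your writhe bookkeeping $\bar w(\mathcal{L}_0)-\bar w(\mathcal{L}_\pm)=-1-2\,lk(\mathcal{K}_\alpha',\mathcal{K}_\alpha'')$, which is odd, produces exactly the sign $(-1)^{p-1}$ matching the self-crossing congruence, so $\Phi$ propagates correctly. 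The one piece you leave as an ``obstacle'' --- the framed-unknot base case --- is precisely the congruent framing-change formula, i.e.\ Theorem 3.15 of \cite{CLPZ}, which this paper already invokes in the proof of Theorem 7.7; you can cite it instead of redoing the hook-partition cyclotomic computation (your data $\chi_{(p-k,1^k)}(C_{(p)})=(-1)^k$ and $\kappa_{(p-k,1^k)}=p(p-1)-2pk$ are correct, and the $\tau=0$ case is the exact identity $[p]\langle P_p\rangle=t^p-t^{-p}$). Two small points of hygiene: the complexity measure should be the standard one for skein inductions (crossing number of the diagram together with the number of switches needed to reach a totally descending diagram), not the unknotting number of the link, since the latter is not manifestly decreased by your moves; and in the base case you should note multiplicativity of $\langle\cdot\rangle$ over split unions to reduce a disjoint union of framed unknots to a single $U_\tau$, which holds in the skein of the plane.
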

In fact, Corollary 7.3 is equivalent to the framed LMOV conjecture
in a special case.

In conclusion, these beautiful structures of the reformulated
colored HOMFLYPT invariant convince us that it is natural to study
the reformulated colored HOMFLYPT invariant
$\mathcal{Z}_{\vec{\mu}}(\mathcal{L})$ or
$\check{\mathcal{Z}}_{\vec{\mu}}(\mathcal{L})$ instead of
$W_{\vec{\mu}}(\mathcal{L})$ in HOMFLY skein theory.

\subsection{Reformulated composite invariants}
In the following, we introduce an analog reformulated invariant for
composite invariant. First, for any partition $\nu\in \mathcal{P}$,
we associate it a skein element $R_{\nu}\in \mathcal{C}$ by
\begin{align} \label{Rv}
R_{\nu}=\sum_{A}\chi_{A}(\nu)\sum_{\lambda,\mu}c_{\lambda,\mu}^{A}Q_{\lambda,\mu}.
\end{align}
In particular, when all the $\mu=\emptyset$ in (\ref{Rv}), we have
$R_{\nu}=P_{\nu}\in \mathcal{C}_{|\nu|,0}$.

\begin{definition}
For a link with $L$ components, we define the reformulated composite
invariants
\begin{align}
\mathcal{R}_{\vec{\nu}}(\mathcal{L};q,t)=\langle\mathcal{L}\star
\otimes_{\alpha=1}R_{\nu^\alpha}\rangle, \
\check{\mathcal{R}}_{\vec{\nu}}(\mathcal{L};q,t)=[\vec{\nu}]\mathcal{R}_{\vec{\nu}}(\mathcal{L};q,t).
\end{align}
Moreover, for $p\in \mathbb{Z}$, we use the notation
$\check{\mathcal{R}}_p(\mathcal{L})$ to denote the
$\check{\mathcal{R}}_{(p),..,(p)}(\mathcal{L})$ for simplicity.
\end{definition}
By this definition, the framed Chern-Simons partition
$\mathcal{Z}_{CS}(\mathcal{L})$ defined in
(\ref{reduceCS-partition}) can be rewrote in a neat form:
\begin{align}
&\mathcal{Z}_{CS}(\mathcal{L};q,t)=\sum_{\vec{\nu}\in
\mathcal{P}^L}(-1)^{\sum_{\alpha=1}^Lw(\mathcal{K}_\alpha)|\nu^\alpha|}\mathcal{R}_{\vec{\nu}}(\mathcal{L};q,t)p_{\vec{\nu}}(x).
\end{align}
As in \cite{CLPZ}, we reduce the study of the framed Mari\~no
conjecture to investigate the properties of
$\mathcal{R}_{\vec{\nu}}(\mathcal{L};q,t)$ ( or
$\check{\mathcal{R}}_{\vec{\nu}}(\mathcal{L};q,t)$).

The detailed calculations showed in Appendix   leads to the
following expression for $R_{\nu}$ in the full skein $\mathcal{C}$.
\begin{align}
R_{\nu}&=\sum_{A}\chi_{A}(\nu)\sum_{\lambda,\mu}c_{\lambda,\mu}^{A}Q_{\lambda,\mu}\\\nonumber
&=\sum_{B\cup
C=\nu}\frac{z_{\nu}}{z_{B}z_{C}}P_{B}P_{C}^*+\sum_{B\cup
C=\nu}\sum_{\tau \cup \eta=B}^{\circ}\sum_{\tau \cup
\pi=C}^{\circ}\frac{z_\nu}{z_{\eta}z_{\tau}z_{\pi}}(-1)^{l(\tau)}P_{\eta}P_{\pi}^*.
\end{align}
Thus, in particular, for the partition $\nu=(p)$, we have
\begin{align} \label{Rp}
R_{(p)}=P_{p}+P_{p}^*.
\end{align}
For an oriented knot $\mathcal{K}$, we reverse its orientation and
denote the new knot as $\mathcal{K}^*$. In other words,
$\mathcal{K}$ and $\mathcal{K}^*$ are two same knots but with the
opposite orientation. Because for a knot, the HOMFLY skein relation
is independent of the orientation of knot, we obtain
$\langle\mathcal{K}\rangle=\langle\mathcal{K}^*\rangle$.
Furthermore, let $Q\in \mathcal{C}$, we have
\begin{align}
\langle \mathcal{K} \star Q^*\rangle=\langle\mathcal{K}^*\star Q
\rangle=\langle (\mathcal{K}\star Q)^* \rangle=\langle
\mathcal{K}\star Q\rangle.
\end{align}
Hence for a knot $\mathcal{K}$, we have
\begin{align}
\mathcal{\check{R}}_p(\mathcal{K})=[p]\langle \mathcal{K}\star
R_{(p)} \rangle=[p](\langle \mathcal{K}\star P_{p} \rangle+\langle
\mathcal{K}\star P_{p}^* \rangle)=2[p]\langle \mathcal{K}\star P_{p}
\rangle=2\check{\mathcal{Z}}_p(\mathcal{K}).
\end{align}

Now we consider the case of link. Let $\mathcal{L}$ be an oriented
link with $L$ components $\mathcal{K}_{\alpha}$ with
$\alpha=1,2,..,L$. For convenience, we also write
$\mathcal{L}=\mathcal{K}_{1}\sqcup\mathcal{K}_2\sqcup \cdots \sqcup
\mathcal{K}_{L}$. We use the notation $\mathcal{L}^*$ to denote the
new link obtained by reversing the orientations of all components,
i.e. $\mathcal{L}^*=\mathcal{K}_{1}^*\sqcup\mathcal{K}_2^*\sqcup
\cdots \sqcup \mathcal{K}_{L}^*$. Similarly, we have
$\langle\mathcal{L} \rangle=\langle \mathcal{L}^* \rangle$.
Furthermore, given $Q_{\alpha}\in \mathcal{C}$, for $\alpha=1,..,L$,
we also have
\begin{align}
\langle\mathcal{L}\star
\otimes_{\alpha}^LQ_{\alpha}^*\rangle=\langle\mathcal{L}^*\star
\otimes_{\alpha}^LQ_{\alpha}\rangle=\langle(\mathcal{L}\star
\otimes_{\alpha}^LQ_{\alpha})^*\rangle=\langle\mathcal{L}\star
\otimes_{\alpha}^LQ_{\alpha}\rangle.
\end{align}

Let $1\leq \alpha_1<\alpha_2<\cdots <\alpha_{k}\leq L$ be the
indices in $\{1,2,...,L\}$. By reversing the orientations of the
components $\mathcal{K}_{\alpha_1},..,\mathcal{K}_{\alpha_k}$, we
obtain the new link
\begin{align}
\mathcal{L}_{\alpha_1,\alpha_2,.,\alpha_k}=\mathcal{K}_{1}\sqcup\mathcal{K}_2\sqcup
\cdots\mathcal{K}_{\alpha_1}^*\sqcup\cdots\mathcal{K}_{\alpha_2}^*\sqcup\cdots\mathcal{K}_{\alpha_k}^*\sqcup\cdots
\sqcup \mathcal{K}_{L}.
\end{align}
It is obvious that
\begin{align}
\mathcal{L}_{\alpha_1,\alpha_2,.,\alpha_k}^*=\mathcal{L}_{1,2,..,\hat{\alpha}_1,..,\hat{\alpha}_2,..,\hat{\alpha}_k,..,L}
\end{align}
where $\hat{\alpha}_i$ denotes the index $\alpha_i$ is omitted.

Combing the above notations, by the formula (\ref{Rp}), we finally
have
\begin{theorem}
\begin{align}
\check{\mathcal{R}}_{p}(\mathcal{L})=\sum_{k=0}^L\sum_{1\leq
\alpha_1<\alpha_2<\cdots \alpha_k\leq
L}\check{\mathcal{Z}}_p(\mathcal{L}_{\alpha_1,\alpha_2,..,\alpha_k}).
\end{align}
\end{theorem}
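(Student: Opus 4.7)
The plan is to derive the identity directly from (7.12), which gives $R_{(p)} = P_{p}+P_{p}^{*}$, combined with the componentwise form of the orientation-reversal principle appearing in (7.14). First I would start from the definition
$$\check{\mathcal{R}}_{p}(\mathcal{L};q,t) = [\vec{\nu}]\,\bigl\langle \mathcal{L}\star \otimes_{\alpha=1}^{L} R_{(p)} \bigr\rangle$$
with $\vec{\nu}=((p),\ldots,(p))$, substitute $R_{(p)}=P_{p}+P_{p}^{*}$ in each of the $L$ tensor factors, and expand by multilinearity of the bracket. This produces $2^{L}$ terms indexed by subsets $S\subseteq\{1,\ldots,L\}$, giving
$$\check{\mathcal{R}}_{p}(\mathcal{L}) = [\vec{\nu}]\sum_{S\subseteq\{1,\ldots,L\}}\Bigl\langle \mathcal{L}\star \Bigl(\bigotimes_{\alpha\in S}P_{p}^{*}\otimes \bigotimes_{\alpha\notin S}P_{p}\Bigr)\Bigr\rangle,$$
and grouping the subsets by their cardinality $k$ and writing them as $\{\alpha_{1}<\cdots<\alpha_{k}\}$ matches the index set on the right of (7.19) term by term.

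The second step is to identify each bracket above with $\check{\mathcal{Z}}_{p}(\mathcal{L}_{S})$, where $\mathcal{L}_{S}=\mathcal{L}_{\alpha_{1},\ldots,\alpha_{k}}$ denotes the link with orientations reversed on the components indexed by $S=\{\alpha_{1},\ldots,\alpha_{k}\}$. The key observation is the componentwise version of the identity (7.14): since the involution $*$ on $\mathcal{C}$ is realized by a $\pi$-rotation of the annulus about its horizontal axis, decorating the $\alpha$-th annular neighborhood of $\mathcal{L}$ by $P_{p}^{*}$ yields the same tangle (up to ambient isotopy) as leaving the decoration $P_{p}$ and reversing the orientation of the core of that single annulus. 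Applying this one component at a time, the decorated link $\mathcal{L}\star(\bigotimes_{\alpha\in S}P_{p}^{*}\otimes \bigotimes_{\alpha\notin S}P_{p})$ is isotopic to $\mathcal{L}_{S}\star \bigotimes_{\alpha=1}^{L}P_{p}$, so each bracket equals $[\vec{\nu}]^{-1}\check{\mathcal{Z}}_{p}(\mathcal{L}_{S})$ and summing yields (7.19).

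The main obstacle, and really the only point needing care beyond bookkeeping, is the localization of the $*$-involution: the excerpt only records the simultaneous reversal identity $\langle \mathcal{L}\star\otimes Q_{\alpha}^{*}\rangle=\langle \mathcal{L}^{*}\star\otimes Q_{\alpha}\rangle$. I would justify the componentwise variant by applying the $\pi$-rotation independently to each annular factor of the satellite construction; this is legitimate because the small annular neighborhoods used to attach the patterns $P_{p}$ or $P_{p}^{*}$ are pairwise disjoint, so the rotation on one factor does not affect the others. A minor sanity check worth including is that the normalization $[\vec{\nu}]=[p]^{L}$ is symmetric in the $L$ components and that reversing orientations on a subset preserves the individual decoration degrees, so the same normalizing factor appears on both sides of the desired identity.
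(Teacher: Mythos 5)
Your proposal is correct and follows essentially the same route as the paper: expand $R_{(p)}=P_{p}+P_{p}^{*}$ multilinearly over the $L$ components to get $2^{L}$ terms indexed by subsets, then convert each starred decoration into an orientation reversal of the corresponding component via the $*$-involution, which is exactly how the paper combines (7.12) with the identities (7.11)--(7.15). Your explicit justification of the componentwise version of the reversal identity (which the paper uses only implicitly) is a welcome clarification rather than a deviation.
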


By Theorem 7.1, we obtain the following integrality result:
\begin{theorem}
For any link $\mathcal{L}$, we have
\begin{align}
\check{\mathcal{R}}_{p}(\mathcal{L};q,t) \in \mathbb{Z}[z^2,t^{\pm
1}].
\end{align}
\end{theorem}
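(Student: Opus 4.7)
The plan is to deduce this integrality result as a direct corollary of the two preceding theorems, without requiring any further skein-theoretic input. Specifically, the decomposition formula
$$\check{\mathcal{R}}_p(\mathcal{L})=\sum_{k=0}^{L}\sum_{1\leq \alpha_1<\cdots<\alpha_k\leq L}\check{\mathcal{Z}}_p(\mathcal{L}_{\alpha_1,\dots,\alpha_k})$$
just proved reduces the question about $\check{\mathcal{R}}_p$ to a corresponding statement about the reformulated colored HOMFLYPT invariants $\check{\mathcal{Z}}_p$, whose integrality has already been established in \cite{CLPZ}.

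The main step is to observe that each term on the right-hand side is the invariant $\check{\mathcal{Z}}_p$ evaluated on a link with $L$ components, namely the one obtained from $\mathcal{L}$ by reversing the orientations on a prescribed subset of components. In particular every $\mathcal{L}_{\alpha_1,\dots,\alpha_k}$ is again a genuine oriented link with $L$ components, so the integrality theorem for $\check{\mathcal{Z}}_p$ applies to each summand and yields $\check{\mathcal{Z}}_p(\mathcal{L}_{\alpha_1,\dots,\alpha_k};q,t)\in \mathbb{Z}[z^2,t^{\pm 1}]$. The outer sum is finite, with $2^L$ terms, and $\mathbb{Z}[z^2,t^{\pm 1}]$ is closed under finite $\mathbb{Z}$-linear combinations, so the conclusion $\check{\mathcal{R}}_p(\mathcal{L};q,t)\in \mathbb{Z}[z^2,t^{\pm 1}]$ follows at once.

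There is essentially no obstacle for the stated theorem: all of the nontrivial content has been absorbed into the two prior results, namely the identity $R_{(p)}=P_{(p)}+P_{(p)}^*$ together with the orientation-reversal bookkeeping producing the decomposition, and the HOMFLY skein-theoretic proof from \cite{CLPZ} that $\check{\mathcal{Z}}_p$ lies in $\mathbb{Z}[z^2,t^{\pm 1}]$. If one wished instead to upgrade the conclusion to the stronger form $\check{\mathcal{R}}_p\in 2\mathbb{Z}[z^2,t^{\pm 1}]$ stated in the introduction, one would additionally pair each orientation subset with its complement and invoke the equality of $\check{\mathcal{Z}}_p$ on a link and on the link with all orientations simultaneously reversed; this pairing doubles each distinct contribution and produces the factor of $2$.
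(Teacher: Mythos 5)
Your proposal matches the paper's argument exactly: the paper derives this integrality statement as an immediate consequence of the decomposition $\check{\mathcal{R}}_{p}(\mathcal{L})=\sum_{k}\sum_{\alpha_1<\cdots<\alpha_k}\check{\mathcal{Z}}_p(\mathcal{L}_{\alpha_1,\dots,\alpha_k})$ together with the previously established fact that $\check{\mathcal{Z}}_{\vec{\mu}}(\mathcal{L};q,t)\in\mathbb{Z}[z^2,t^{\pm 1}]$ for any link. Your closing observation about pairing each orientation subset with its complement to obtain the stronger conclusion $\check{\mathcal{R}}_{p}\in 2\mathbb{Z}[z^2,t^{\pm 1}]$ is also precisely the content of the remark the paper places immediately after this theorem.
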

\begin{remark}
In fact, $\check{\mathcal{R}}_{p}(\mathcal{L};q,t) \in
2\mathbb{Z}[z^2,t^{\pm 1}]$. Since
\begin{align}
\check{\mathcal{Z}}_{p}(\mathcal{L}_{\alpha_1,\alpha_2,.,\alpha_k})=\check{\mathcal{Z}}_p
(\mathcal{L}_{1,2,..,\hat{\alpha}_1,..,\hat{\alpha}_2,..,\hat{\alpha}_k,..,L})
\end{align}
by (7.13) and (7.15). The $2^L$ terms in the summation of (7.16) is
reduce to $2\times 2^{L-1}$ terms.
\end{remark}

\begin{example}
When $L=2$, $\mathcal{L}=\mathcal{K}_1\sqcup \mathcal{K}_2$.  We
have
\begin{align}
\check{\mathcal{R}}_{p}(\mathcal{K}_1\sqcup
\mathcal{K}_2)&=[p]^2(\mathcal{Z}_{(p),(p)}(\mathcal{K}_1\sqcup
\mathcal{K}_2)+\mathcal{Z}_{(p),(p)}(\mathcal{K}_1^*\sqcup
\mathcal{K}_2^*)\\\nonumber
&+\mathcal{Z}_{(p),(p)}(\mathcal{K}_1^*\sqcup
\mathcal{K}_2)+\mathcal{Z}_{(p),(p)}(\mathcal{K}_1\sqcup
\mathcal{K}_2^*))\\\nonumber
&=2[p]^2(\mathcal{Z}_{(p),(p)}(\mathcal{K}_1\sqcup
\mathcal{K}_2)+\mathcal{Z}_{(p),(p)}(\mathcal{K}_1\sqcup
\mathcal{K}_2^*))\in 2\mathbb{Z}[z^2,t^{\pm 1}].
\end{align}
Where the second "=" is since
$\mathcal{Z}_{(p),(p)}(\mathcal{K}_1\sqcup
\mathcal{K}_2)=\mathcal{Z}_{(p),(p)}(\mathcal{K}_1^*\sqcup
\mathcal{K}_2^*)$ and $\mathcal{Z}_{(p),(p)}(\mathcal{K}_1^*\sqcup
\mathcal{K}_2)=\mathcal{Z}_{(p),(p)}(\mathcal{K}_1\sqcup
\mathcal{K}_2^*))$. Because changing  all the orientations of the
components of a link does not change the HOMFLYPT invariant.
\end{example}
\subsection{Congruent skein relation}
When the crossing is the linking between two different components of
the link, we have the following skein relation for
$\check{\mathcal{R}}_1$ by applying the classical skein relation for
HOMFLYPT polynomial, we get
\begin{align}
\check{\mathcal{R}}_{1}(\mathcal{L}_{+};q,t)-\check{\mathcal{R}}_{1}(\mathcal{L}_{-};q,t)=[1]^{2}\left(
\check{\mathcal{R}}_{1}(\mathcal{L}_{0};q,t)-\check{\mathcal{R}}_{1}(\mathcal{L}_{\infty};q,t)\right),
\end{align}
where
$(\mathcal{L}_+,\mathcal{L}_-,\mathcal{L}_0,\mathcal{L}_\infty)$
denotes the quadruple appears in the classical Kauffman skein
relation. As to $\check{\mathcal{R}}_{p}(\mathcal{L};q,t)$, we
propose

\begin{conjecture}[Congruent skein relation for reformulated composite invariants] For prime $p$, when the crossing is the linking between two different components of
the link, we have
\begin{align}
&\check{\mathcal{R}}_{p}(\mathcal{L}_{+};q,t)-\check{\mathcal{R}}_{p}(\mathcal{L}_{-};q,t)\\\nonumber
&\equiv(-1)^{p-1}p[p]^{2}\left(
\check{\mathcal{R}}_{p}(\mathcal{L}_{0};q,t)-\check{\mathcal{R}}_{p}(\mathcal{L}_{\infty};q,t)\right)
\mod [p]^2\{p\}^2.
\end{align}
\end{conjecture}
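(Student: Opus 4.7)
The plan is to reduce the conjecture, via the orientation-reversal decomposition of Theorem~7.5, to the analogous skein congruence for the reformulated colored HOMFLYPT invariant $\check{\mathcal{Z}}_p$ stated as~(\ref{conjcongruentskeinhomfly}), and then carry out the bijective matching of resolutions. First, I would apply Theorem~7.5 to each of $\mathcal{L}_+$ and $\mathcal{L}_-$ to obtain
\begin{equation*}
\check{\mathcal{R}}_p(\mathcal{L}_+)-\check{\mathcal{R}}_p(\mathcal{L}_-)=\sum_{S\subseteq\{1,\dots,L\}}\bigl[\check{\mathcal{Z}}_p((\mathcal{L}_+)_S)-\check{\mathcal{Z}}_p((\mathcal{L}_-)_S)\bigr].
\end{equation*}
Let $C_1,C_2$ be the two components meeting at the crossing. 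Reversing the subset $S$ preserves the crossing sign when $|S\cap\{C_1,C_2\}|$ is even and flips it when odd. Hence for even $S$ the pair $(\mathcal{L}_\pm)_S$ is a genuine $(+,-)$ skein pair, and (\ref{conjcongruentskeinhomfly}) yields
\begin{equation*}
\check{\mathcal{Z}}_p((\mathcal{L}_+)_S)-\check{\mathcal{Z}}_p((\mathcal{L}_-)_S)\equiv(-1)^{p-1}p[p]^2\,\check{\mathcal{Z}}_p\bigl(((\mathcal{L}_+)_S)_0\bigr)\pmod{[p]^2\{p\}^2};
\end{equation*}
for odd $S$ the same conjecture applied to the flipped pair produces an extra overall sign, while the oriented 0-resolution is now taken at a crossing whose two strands are antiparallel, giving topologically an $\mathcal{L}_\infty$-type smoothing of the original crossing.

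Next I would match the resulting sums to $\check{\mathcal{R}}_p(\mathcal{L}_0)-\check{\mathcal{R}}_p(\mathcal{L}_\infty)$ via Theorem~7.5 applied in reverse. In $\mathcal{L}_0$ the components $C_1,C_2$ merge into a single component $C_{12}$, and the even subsets $S\subseteq\{1,\dots,L\}$ are in bijective correspondence with subsets $S'\subseteq\{C_{12},C_3,\dots,C_L\}$: empty intersection of $S$ with $\{C_1,C_2\}$ sends $S$ to the same $S'$ (not containing $C_{12}$), while full intersection sends $S=\{C_1,C_2\}\cup S''$ to $S'=\{C_{12}\}\cup S''$. One then checks that $((\mathcal{L}_+)_S)_0$ coincides with $(\mathcal{L}_0)_{S'}$ as an oriented link. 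An analogous bijection for odd $S$ identifies the antiparallel smoothings with $(\mathcal{L}_\infty)_{S'}$ with the correct sign. Summing the $\check{\mathcal{Z}}_p$-congruences via Theorem~7.5 in reverse then produces the desired right-hand side.

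The main obstacle is that this reduction is conditional on (\ref{conjcongruentskeinhomfly}), which is itself an open conjecture. The substantive content therefore lies in proving the $\check{\mathcal{Z}}_p$-skein congruence independently; the deep input must come from HOMFLY skein theory, specifically cabled skein relations involving the elements $P_p$ and $P_p^*$ along the lines of~\cite{CLPZ}. A secondary obstacle is the careful orientation bookkeeping in the bijective step: one must verify that the $\mathcal{L}_\infty$ produced by odd subsets coincides with the author's geometric $\mathcal{L}_\infty$ up to the sign controlled by the parity of $|S\cap\{C_1,C_2\}|$, so that the apparent cross-terms cancel. An alternative strategy bypassing (\ref{conjcongruentskeinhomfly}) is to work directly with the cabled skein element $R_{(p)}=P_p+P_p^*$ and establish the congruence at the level of $R_{(p)}$-decorated link diagrams; this would give a more unified argument but introduces substantial additional combinatorics at the cabled crossing.
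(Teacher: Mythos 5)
The statement you are proving is stated in the paper only as a conjecture (Conjecture 7.9); the paper offers no general proof, and its entire supporting content is Theorem 7.10, the $p=2$ special case for the family $T(2,2k)$. Your proposal likewise does not prove the statement: it reduces Conjecture 7.9 to Conjecture 7.2 (the congruent skein relation for $\check{\mathcal{Z}}_p$), which is itself open in the paper and in \cite{CLPZ}. You acknowledge this, but it should be said plainly that what you have outlined is a conditional implication, not a proof, so the ``main obstacle'' you name is in fact the entire problem.

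That said, your reduction mechanism is essentially the one the paper uses in its special case: expand $\check{\mathcal{R}}_p$ via the orientation-reversal decomposition of Theorem 7.5, let the even-parity reversals (relative to the two components at the crossing) supply the $\mathcal{L}_0$ terms and the odd-parity reversals, after swapping the roles of $+$ and $-$, supply the $-\mathcal{L}_\infty$ terms. For $L=2$ this is exactly formula (7.19) splitting $\check{\mathcal{R}}_{(2)(2)}$ into $\mathcal{Z}_{(2)(2)}(T)$ and $\mathcal{Z}_{(2)(2)}(T^*)$. The difference is that the paper does not apply Conjecture 7.2 to the reversed pair: it imports the parallel-orientation congruence from \cite{CLPZ} as a proved theorem for that family, and then handles the antiparallel part by explicit computation of $W_{[(2),(0)],[(0),(2)]}(T(2,2k))$ etc.\ from the torus-link formula, reducing mod $[2]^2\{2\}^2$ by hand. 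Two concrete gaps remain in your outline even granting Conjecture 7.2. First, the framing bookkeeping: $\check{\mathcal{Z}}_p$ and $\check{\mathcal{R}}_p$ are framed (bracket) invariants, the smoothings change the writhe, and this is precisely why the paper's $\mathcal{L}_\infty$ is $U(-2k-1)$ with $2k+1$ explicit negative kinks and why the proof must invoke the congruent framing-change formula of \cite{CLPZ}; your bijective matching of $((\mathcal{L}_+)_S)_0$ with $(\mathcal{L}_0)_{S'}$ and $(\mathcal{L}_\infty)_{S'}$ is silent on framings. Second, the normalization: $\check{\mathcal{Z}}_p$ carries a factor $[p]^{\#\text{components}}$, which drops by one across the smoothing, so the matching of the $[p]^2$ factors on the two sides of the congruence needs to be checked rather than asserted.
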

We have tested a lot of examples for the above conjecture. In
particular, we have the following result.
\begin{theorem}
When $p=2$, the conjecture holds for $\mathcal{L}_+=T(2,2k+2)$,
$\mathcal{L}_-=T(2,2k)$, $\mathcal{L}_0=T(2,2k+1)$ and
$\mathcal{L}_{\infty}=U(-2k-1)$, where $U(-2k-1)$ denotes the unknot
with $2k+1$ negative kinks.
\end{theorem}
\begin{proof}
We need to prove the following identity:
\begin{align} \label{congruentskeinp2}
&\check{\mathcal{R}}_{(2)(2)}(T(2,2k+2);q,t)-\check{\mathcal{R}}_{(2)(2)}(T(2,2k);q,t)\\\nonumber
&\equiv-2[2]^{2}\left(\check{\mathcal{R}}_{(2)}(T(2,2k+1);q,t)-\check{\mathcal{R}}_{(2)}(U(-2k-1);q,t)\right)
\mod[2]^{2}\{2\}^{2}.
\end{align}
By formula (7.19),  we have
\begin{align}
\check{\mathcal{R}}_{(2)(2)}(T(2,2k);q,t)=2[2]^{2}
(\mathcal{Z}_{(2)(2)}(T(2,2k);q,t)+\mathcal{Z}_{(2)(2)}((T(2,2k))^{*};q,t))
\end{align}
where $(T(2,2k))^*$ denotes the link obtained by reversing the
orientation of the second component of $T(2,2k)$. Then
\begin{align}
&\mathcal{Z}_{(2)(2)}((T(2,2k))^{*};q,t)\\\nonumber
&=W_{[(2),(0)],[(0),(2)]}(T(2,2k);q,t)-2W_{[(2),(0)],[(0),(1^{2})]}(T(2,2k);q,t)\\\nonumber
&+W_{[(1^{2}),(0)],[(0),(1^{2})]}(T(2,2k);q,t)
\end{align}
and
\begin{align}
\check{\mathcal{R}}_{(2)}(T(2,2k+1);q,t)=2[2]\mathcal{Z}_{(2)}(T(2,2k+1);q,t),
\end{align}
\begin{align}
\check{\mathcal{R}}_{(2)}(U(-2k-1);q,t)=2[2]\mathcal{Z}_{(2)}(U(-2k-1);q,t).
\end{align}

In \cite{CLPZ}, we have proved the following formula (see Theorem
4.4 in \cite{CLPZ}):
\begin{align}
&\check{\mathcal{Z}}_{(2)(2)}(T(2,2k+2);q,t)-\check{\mathcal{Z}}_{(2)(2)}(T(2,2k);q,t)\\\nonumber
&\equiv-2[2]^{2}\check{\mathcal{Z}}_{(2)}(T(2,2k+1);q,t) \mod
[2]^{2}\{2\}^{2}.
\end{align}
So in order to prove the formula (\ref{congruentskeinp2}), we only
need to show
\begin{align}
&\check{\mathcal{Z}}_{(2)(2)}(T(2,2k+2))^{*};q,t)-\check{\mathcal{Z}}_{(2)(2)}(T(2,2k))^{*};q,t)\\\nonumber
&\equiv 2[2]^{2} \check{\mathcal{Z}}_{(2)}(U(-2k-1);q,t) \mod
[2]^{2}\{2\}^{2}.
\end{align}

By the formula (\ref{toruslinkformula}), we have
\begin{align}
W_{[(2),(0)][(0),(2)]}(T(2,2k))=s_{(2),(2)}^{\#}(q,t)+q^{-4k}t^{-2k}s_{(1),(1)}^\#(q,t)+q^{-4k}t^{-4k}
\end{align}
\begin{align}
W_{[(2),(0)][(0),(1^{2})]}(T(2,2k))=s_{(2),(1^{2})}^\#(q,t)+t^{-2k}
s_{(1),(1)}^\#(q,t) \end{align}
\begin{align}
W_{[(1^{2}),(0)][(0),(1^{2})]}(T(2,2k))=s_{(1^{2}),(1^{2})}^\#(q,t)+q^{4k}
t^{-2k}s_{(1),(1)}^\#(q,t)+q^{4k}t^{-4k}.
\end{align}
Thus
\begin{align}
\check{\mathcal{Z}}_{(2)(2)}((T(2,2k))^{*};q,t)&=
[2]^{2}\left(s_{(2),(2)}^\#(q,t)-2s_{(2),(1^{2})}^\#(q,t)+s_{(1^{2}),(1^{2}
)}^\#(q,t)\right.\\\nonumber
&\left.+(q^{-4k}-2+q^{4k})t^{-2k}s_{(1),(1)}^\#(q,t)+(q^{-4k}+q^{4k})t^{-4k}\right).
\end{align}
By the formula (\ref{unknotformula}), we get
\begin{align}
&\check{\mathcal{Z}}_{(2)(2)}((T(2,2k))^{*};q,t)\\\nonumber
&\equiv(t^{2}-t^{-2})^{2}+(2t^{-4k}-2)(q^{2}-q^{-2})^{2} \mod
[2]^{2} \{2\}^{2}.
\end{align}
By the congruent framing change formula in \cite{CLPZ}(see Theorem
3.15 in \cite{CLPZ}), we have
\begin{align}
\check{\mathcal{Z}}_{(2)}(U(-2k-1);q,t)&\equiv
-t^{-4k-2}\check{\mathcal{Z}}_{(2)}(U;q,t) \mod \{2\}^2\\\nonumber
&=-t^{-4k-2}(t^2-t^{-2}) \mod \{2\}^2.
\end{align}
Therefore, we obtain
\begin{align}
&\check{\mathcal{Z}}_{(2)(2)}((T(2,2k+2))^{*};q,t)-\check{\mathcal{Z}}_{(2)(2)}((T(2,2k))^{*};q,t)\\\nonumber
&-2[2]^{2} \check{\mathcal{Z}}_{(2)}(U(-2k-1);q,t)\\\nonumber  &
\equiv(2t^{-4k-4}-2t^{-4k})(q^{2}-q^{-2})^{2}+2[2]^{2}t^{-4k-2}(t^{2}-t^{-2})\\\nonumber
&=(2t^{-4k-4}-2t^{-4k})(q^{2}-q^{-2})^{2}+2(t^{-4k}-t^{-4k-4})(q^{2}
-q^{-2})^{2}\\\nonumber &=0 \mod [2]^{2}\{2\}^{2}.
\end{align}
The proof is completed.
\end{proof}

\section{Appendix}
\subsection{The expression for $R_{\nu}$}
We use the notations $A,B,C,..$ and
$\lambda,\mu,\nu,\rho,\sigma,\delta,\xi,\eta,\tau,...$ to denote the
partitions in $\mathcal{P}$. Given two partitions
$\lambda=(\lambda_1,..,\lambda_{l(\lambda)})$ and
$\mu=(\mu_1,...,\mu_{l(\mu)})$, we use $\lambda\cup \mu$ to denote
the new partition with all its parts are given by
$\lambda_1,..,\lambda_{l(\lambda)}, \mu_1,...,\mu_{l(\mu)}$.
Moreover, the summing notation $\sum_{B\cup C=\nu}$ denotes the sum
of all the partitions $B$ and $C$ (including $B$, $C=\emptyset$)
such that $B\cup C=\nu$. And the summing notation $$\sum_{B\cup
C=\nu}^{\circ}$$ denotes the sum of all the partitions $B$ and $C$
with $B\neq \emptyset$ and $C\neq \emptyset$ such that $B\cup
C=\nu$.

Since the Littlewood-Richardson coefficient $c_{\lambda,\mu}^{A}$ is
given by
\begin{align}
c_{\lambda,\mu}^{A}=\sum_{B,C}\frac{\chi_{\lambda}(B)\chi_{\lambda}(C)}{z_{B}z_{C}}\chi_{A}(B\cup
C).
\end{align}
The orthogonality of character formula gives
\begin{align} \label{orth}
\sum_A\frac{\chi_A(\mu) \chi_A(\nu)}{z_\mu}=\delta_{\mu \nu}.
\end{align}
We have
\begin{align}
\sum_{A}\chi_{A}(\nu)c_{\lambda,\mu}^{A}=\sum_{B\cup
C=\nu}\frac{z_{\nu}}{z_{B}z_{C}}\chi_{\lambda}(B)\chi_{\mu}(C)
\end{align}

Since
\begin{align}
Q_{\lambda,\mu}&=\sum_{\sigma,\rho,\delta}(-1)^{|\sigma|}c_{\sigma,\rho}^{\lambda}c_{\sigma^{t},\delta}^{\mu}Q_{\rho}Q_{\delta}^{*}\\\nonumber
&=Q_{\lambda}Q_{\mu}^*+\sum_{\sigma,\rho,\delta\neq
\emptyset}(-1)^{|\sigma|}c_{\sigma,\rho}^{\lambda}c_{\sigma^t,\delta}^{\mu}Q_{\rho}Q_{\delta}^*.
\end{align}

\begin{align}
R_{\nu}&=\sum_{A}\chi_{A}(\nu)\sum_{\lambda,\mu}c_{\lambda,\mu}^{A}Q_{\lambda,\mu}\\\nonumber
&=\sum_{B\cup
C=\nu}\frac{z_{\nu}}{z_{B}z_{C}}\sum_{\lambda,\mu}\chi_{\lambda}(B)\chi_{\mu}(C)Q_{\lambda}Q_{\mu}^*\\\nonumber
&+\sum_{B\cup
C=\nu}\frac{z_{\nu}}{z_{B}z_{C}}\sum_{\lambda,\mu}\chi_{\lambda}(B)\chi_{\mu}(C)\sum_{\sigma,\rho,\delta\neq
\emptyset}(-1)^{|\sigma|}c_{\sigma,\rho}^{\lambda}c_{\sigma^t,\delta}^{\mu}Q_{\rho}Q_{\delta}^*.
\end{align}
In the right hand side of the above formula,  the first term $I$ is
\begin{align}
I=\sum_{B\cup
C=\nu}\frac{z_{\nu}}{z_{B}z_{C}}\sum_{\lambda,\mu}\chi_{\lambda}(B)\chi_{\mu}(C)Q_{\lambda}Q_{\mu}^*=\sum_{B\cup
C=\nu}\frac{z_{\nu}}{z_{B}z_{C}}P_{B}P_{C}^*.
\end{align}
Now we compute the second term $II$ as follow. We write
\begin{align}
c_{\sigma,\rho}^{\lambda}=\sum_{\xi,\eta}\frac{\chi_{\sigma}(\xi)\chi_{\rho}(\eta)}{z_{\xi}z_{\eta}}\chi_{\lambda}(\xi\cup
\eta), \
c_{\sigma^t,\delta}^{\mu}=\sum_{\tau,\pi}\frac{\chi_{\sigma^t}(\tau)\chi_{\delta}(\pi)}{z_{\tau}z_{\pi}}\chi_{\mu}(\tau\cup
\pi)
\end{align}
By using the orthogonality relation (\ref{orth}) twice, we obtain
\begin{align}
II&=\sum_{B\cup
C=\nu}\frac{z_{\nu}}{z_Bz_C}\sum_{\sigma,\rho,\delta\neq
\emptyset}(-1)^{|\sigma|}\sum_{\xi\cup\eta=B}^{\circ}\frac{z_{B}}{z_{\xi}z_{\eta}}\chi_{\sigma}(\xi)\chi_{\rho}(\eta)
\sum_{\tau\cup\pi=B}^{\circ}\frac{z_{C}}{z_{\tau}z_{\pi}}\chi_{\sigma^t}(\tau)\chi_{\delta}(\pi)Q_{\rho}Q_{\delta}^*\\\nonumber
&=\sum_{B\cup C=\nu}\sum_{\xi \cup \eta=B}^{\circ}\sum_{\tau\cup
\pi=C}^{\circ}\frac{z_{\nu}}{z_{\xi}z_{\eta}z_{\tau}z_{\pi}}\sum_{\sigma,\rho,\delta\neq
\emptyset}(-1)^{|\sigma|}\chi_{\sigma}(\xi)\chi_{\sigma^t}(\tau)
\chi_{\rho}(\eta)\chi_{\delta}(\pi)Q_{\rho}Q_{\delta}^*.
\end{align}

Since
$\chi_{\sigma^t}(\tau)=(-1)^{|\tau|-l(\tau)}\chi_{\sigma}(\tau)$, by
using the orthogonality relation (\ref{orth}) again, we obtain
\begin{align}
II&=\sum_{B\cup C=\nu}\sum_{\xi \cup \eta=B}^{\circ}\sum_{\tau \cup
\pi=C}^{\circ}\frac{z_\nu}{z_{\eta}z_{\tau}z_{\pi}}\sum_{\rho,\delta}(-1)^{l(\tau)}\delta_{\xi,\tau}\chi_{\rho}(\eta)\chi_{\delta}(\pi)Q_{\rho}Q_{\delta}^*
\\\nonumber &=\sum_{B\cup C=\nu}\sum_{\tau \cup \eta=B}^{\circ}\sum_{\tau \cup
\pi=C}^{\circ}\frac{z_\nu}{z_{\eta}z_{\tau}z_{\pi}}(-1)^{l(\tau)}P_{\eta}P_{\pi}^*.
\end{align}

Thus, we have
\begin{align}
R_{\nu}=\sum_{B\cup
C=\nu}\frac{z_{\nu}}{z_{B}z_{C}}P_{B}P_{C}^*+\sum_{B\cup
C=\nu}\sum_{\tau \cup \eta=B}^{\circ}\sum_{\tau \cup
\pi=C}^{\circ}\frac{z_\nu}{z_{\eta}z_{\tau}z_{\pi}}(-1)^{l(\tau)}P_{\eta}P_{\pi}^*.
\end{align}

\end{document}